\pgfplotsset{compat=newest}
\g@addto@macro\th@plain{\renewcommand*{\p@enumi}{\thethm~}}
\theoremstyle{plain}
\newtheorem{thm}{Theorem}[section]
\newtheorem{prop}[thm]{Proposition}
\newtheorem{lemma}[thm]{Lemma}
\newtheorem{con}[thm]{Conjecture}
\newtheorem{question}[thm]{Question}
\newtheorem{remark}[thm]{Remark}
\renewcommand{\P}{\mathbb{P}} 
\newcommand{\E}{\mathbb{E}} 
\newcommand{\N}{\mathbb{N}} 
\newcommand{\Z}{\mathbb{Z}} 
\newcommand{\R}{\mathbb{R}} 
\newcommand{\1}{\mathds{1}} 
\newcommand{\e}{\mathrm{e}} 
\newlength{\frogpathlength} 
\newlength{\frogpathheight}
\newcommand{\fp}[1]{\setlength{\frogpathheight}{\heightof{$\scriptstyle #1$}}
                    \,\begin{tikzpicture}[baseline=-0.6ex, node distance=5em,text height=\frogpathheight-.8ex,text depth=.25ex]
                        \setlength{\frogpathlength}{\maxof{\widthof{$\scriptstyle #1$}+.5em}{1em}}
                        \draw [->,decorate,decoration={zigzag,amplitude=.2ex,segment length=.35em,post=lineto,post length=0.2em}] (0,0) -- ++(\frogpathlength,0) 
                        node [above, midway, yshift=-0.2ex]{$\scriptstyle #1$};
                        \end{tikzpicture}\,
                    } 
\newcommand{\fm}{\operatorname{FM}} 
\newcommand{\fc}{F\hspace{-0.04em}C} 
\newcommand{\fb}{F\hspace{-0.1em}B} 
\newcommand{\Addresses}{{%
  \bigskip
  \footnotesize

   C.~Döbler, \textsc{Faculté des Sciences, de la Technologie et de la Communication, Université du Luxembourg, 
   Maison du Nombre, 6, Avenue de la Fonte, 4364 Esch-sur-Alzette, Luxembourg}\par\nopagebreak
  \textit{E-mail address:} \texttt{christian.doebler@uni.lu}
  
  \medskip
  
  N.~Gantert, T.~Höfelsauer, F.~Weidner \textsc{Fakultät für Mathematik, Technische Universität München, 
  Boltzmannstr. 3, 85748 Garching, Germany}\par\nopagebreak
  \textit{E-mail addresses:} \texttt{gantert@ma.tum.de}, \texttt{thomas.hoefelsauer@tum.de}, \newline \texttt{felizitas.weidner@tum.de}

  \medskip

  S.~Yu.~Popov, \textsc{Institute of Mathematics, Statistics and Scientific Computation, University of Campinas--UNICAMP, 
  rua Sérgio Buarque de Holanda 651, 13083--859, Campinas, SP, Brazil}\par\nopagebreak
  \textit{E-mail address:} \texttt{popov@ime.unicamp.br}

}}
\begin{document}
\title{Recurrence and Transience\\ of Frogs with Drift on $\Z^d$}
\author{Christian D\"obler, Nina Gantert, Thomas H\"ofelsauer, \\Serguei Popov and Felizitas Weidner}
\maketitle

\begin{abstract}
We study the frog model on $\Z^d$ with drift in dimension $d \geq 2$ and establish the existence of transient and recurrent regimes depending on the transition probabilities. We focus on a model in which the particles perform nearest neighbour random walks which are balanced in all but one direction. This gives a model with two parameters. We present conditions on the parameters for recurrence and transience, revealing interesting differences between dimension $d=2$ and dimension $d \geq 3$.
Our proofs make use of (refined) couplings with branching random walks for the transience, and with percolation for the recurrence.

\smallskip
\noindent \textbf{Keywords:} frog model, interacting random walks, recurrence, transience, branching random walk, percolation.

\smallskip
\noindent \textbf{AMS 2000 subject classification:} primary 60J10, 60K35; secondary 60J80
\end{abstract}

\section{Introduction and main results}
The frog model is a model of interacting random walks or, more generally, Markov chains on a graph $G=(V,E)$ in discrete time $\N_0$. It can be described as follows: There is one distinguished 
vertex $x_0\in V$, called the origin, and at time $0$ there is exactly one active particle (awake frog) at $x_0$. At every other vertex $x$, there is a (possibly zero) number $\eta_x$ of sleeping frogs. 
The frog at $x_0$ now starts walking randomly on the graph and each time it visits a site with sleeping frogs, they immediately become active and start performing random walks and waking up sleeping frogs themselves, independently of each other and of all other frogs. The transition mechanism of the individual frogs is the same for all frogs. The frog model is called recurrent if the probability that the origin $x_0$
is visited infinitely often equals $1$, otherwise the model is called transient. The frog model with $V=\Z^d$, $E$ the set of nearest-neighbour edges on $\Z^d$, $x_0:=0$, $\eta_x=1$ for each $x\in\Z^d\setminus\{0\}$ and the underlying random walk being simple random walk (SRW) on $\Z^d$ was studied by Telcs and Wormald \cite{TW99} who, however, called it egg model. The name frog model was only later suggested by Durrett. 
In \cite{TW99}, it is in particular shown that the frog model is recurrent for each dimension $d$. See also \cite{P01}.
Note that the frog model on $\Z^d$ with SRW is trivially recurrent for $d=1,2$, due to P\'{o}lya's theorem. Thus, in \cite{GS09} Gantert and Schmidt considered the frog model on $\Z$ with the underlying random walk having a drift to the right. They considered both fixed and i.i.d.~random initial configurations $(\eta_x)_{x\in\Z\setminus\{0\}}$ of sleeping frogs and derived a criterion separating transience from recurrence. In the case of an i.i.d.~initial configuration of sleeping frogs they also proved a zero-one law, which says that the probability of infinitely many returns to $0$ equals $1$ if $\E[\log^+(\eta)]=\infty$, and equals $0$ otherwise. Remarkably, this result only depends on the distribution of $\eta$ and does, in particular, not depend on the value of the drift. 
The recurrence part of the latter result was generalised to any dimension~$d$ by D\"obler and Pfeifroth in \cite{DP14}. They proved that the frog model on $\Z^d$ with underlying (irreducible) random walk which has an arbitrary drift to the right is recurrent provided that $\E[\log^+(\eta)^{\frac{d+1}{2}}]=\infty$. Another sufficient recurrence condition involving the tail behaviour of $\eta$ is derived in \cite{KZ17}. 
Kosygina and Zerner proved in \cite{KZ17} a zero-one law under the general condition that the frog trajectories are given by a transitive Markov chain. 
Recurrence and transience for the frog model on the $d$-ary tree have recently been investigated in \cite{HJJ14} and \cite{HJJ16} by Hoffman, Johnson and Junge. 
Other publications on the frog model include \cite{AMP02}, \cite{FMS04}, \cite{GNR17}, \cite{HW16}, \cite{JJ16} and \cite{JJ16sto} and \cite{R17} and references therein (the list is not exhaustive).

In the present article we study recurrence and transience of the frog model on $\Z^d$ for $d \geq 2$ when the underlying transition mechanism is not  symmetric. We assume that at each vertex in $\Z^d\setminus\{0\}$ there is exactly one sleeping frog at time $0$. 
Given this assumption, and using the zero-one law proved in \cite{KZ17}, one can now classify the transition laws of the particles in a recurrent and a transient class. Our proofs show that both regimes exist. In order to give more quantitative statements, we focus on a model in which the particles perform nearest neighbour random walks which are balanced in all but one direction. More precisely,
set $\mathcal{E}_d=\{\pm e_j \colon 1\leq j\leq d\}$ where $e_j$ denotes the $j$-th standard basis vector in $\R^d$, $j=1,\dotsc,d$. 
The particles move according to the following transition probabilities, which depend on two parameters $w \in [0,1]$ and $\alpha \in [0,1]$:
\begin{equation}\label{transition_function}
 \pi_{w,\alpha}(e) =
 \begin{cases}
  \frac{w(1+\alpha)}{2} & \text{for $e=e_1$} \\
  \frac{w(1-\alpha)}{2} & \text{for $e=-e_1$} \\
  \frac{1-w}{2(d-1)}    & \text{for $e \in\{\pm e_2,\dotsc, \pm e_d\}$} 
 \end{cases}
\end{equation}
The parameter $w$ is the weight of the drift axis $e_1$, i.e.~the random walk chooses to go in direction $\pm e_1$ with probability $w$. The parameter $\alpha$ describes the strength of the drift, i.e.~if the random walk has chosen to move in drift direction, it takes a step in direction $e_1$ with probability $\frac{1+\alpha}{2}$ and in direction $-e_1$ with probability $\frac{1-\alpha}{2}$. All other directions are balanced and equally probable.
Sometimes we need to consider the corresponding one-dimensional model where we have to demand $w=1$, i.e.~the transition probabilities are defined by $\pi_{\alpha}(e_1)=1-\pi_{\alpha}(-e_1)=\frac{1 + \alpha}{2}$. 
We denote the frog model on $\Z^d$ with parameters $w$ and $\alpha$ by $\fm(d,\pi_{w,\alpha})$.

First, let us discuss the extreme cases. For $w=1$ the frog model is one-dimensional and thus transient for any $\alpha \in (0,1]$ and recurrent for $\alpha=0$ by \cite{GS09}.
For $\alpha =1$ one easily checks that it is transient for any $w \in (0,1]$.
If $w=0$, then $\fm(d,\pi_{0, \alpha})$ is equivalent to the symmetric frog model in $d-1$ dimensions and hence recurrent.
If $\alpha =0$, we are back in the balanced case and the model is recurrent. This follows from Theorem~\ref{thm_d=2_arbitrary_weight_i} and Theorem~\ref{thm_d>2_arbitrary_weight} below.

In dimension $d=2$ the frog model is recurrent whenever $\alpha$ or $w$ are sufficiently small, i.e.~if the underlying transition mechanism is almost balanced. It is transient for $\alpha$ or $w$ close to $1$.

\begin{thm}\label{thm_d=2_arbitrary_weight}
 Let $d =2$ and $w \in (0,1)$. 
 \begin{enumerate}
  \item\label{thm_d=2_arbitrary_weight_i} There exists $\alpha_r = \alpha_r(w) > 0$ such that the frog model $\fm(d,\pi_{w,\alpha})$ is recurrent for all $0 \leq \alpha \leq \alpha_r$.
  \item\label{thm_d=2_arbitrary_weight_ii} There exists $\alpha_t = \alpha_t(w) < 1$ such that the frog model $\fm(d,\pi_{w,\alpha})$ is transient for all $\alpha_t \leq \alpha \leq 1$.
 \end{enumerate}
\end{thm}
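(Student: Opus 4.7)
For part (i), the plan is to combine a renormalisation/percolation argument with the known recurrence of the balanced case and the zero-one law of \cite{KZ17}. Fix $w \in (0,1)$ and start from $\alpha = 0$: the transition $\pi_{w,0}$ is still balanced in each coordinate, and the resulting 2D frog model is recurrent, so with positive probability the frog activity crosses any prescribed $L \times L$ box in a prescribed way. For $\alpha > 0$ sufficiently small (depending on $L$), this crossing event still has positive probability, by an absolute-continuity argument applied on the bounded time scale $O(L^2)$: the law of a walk with parameter $\alpha$ and the law of a walk with parameter $0$ remain close on $O(L^2)$ steps. One then chains such crossing events into a supercritical oriented percolation cluster extending into the $\{x_1<0\}$ half-plane (against the drift). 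Along this cluster, infinitely many frogs are activated at sites $(-N_n, y_n)$ with $N_n\to\infty$ and $|y_n| = O(\sqrt{N_n})$. Each such frog, once activated, performs a walk with drift $\alpha$ in $+e_1$ and therefore hits the origin with probability of order $c(\alpha)/\sqrt{N_n}$ (Gaussian fluctuation of the transverse coordinate at the crossing time $\sim N_n/\alpha$); the sum of these lower bounds diverges, so Borel--Cantelli along (essentially) independent portions of the cluster gives $\P(\text{origin visited infinitely often})>0$, and the zero-one law of \cite{KZ17} upgrades this to probability $1$.

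For part (ii), the plan is to dominate the activation process by a branching random walk (BRW). Since each site of $\Z^2\setminus\{0\}$ carries at most one sleeping frog, the expected total number of visits to the origin is at most
\[
 \sum_{x \in \Z^2 \setminus \{0\}} \P\bigl(x \text{ ever activated}\bigr)\, G_{w,\alpha}(x, 0),
\]
where $G_{w,\alpha}(x, 0) := \sum_{t \geq 0} \P_x(S_t = 0)$ is the Green function of a single $\pi_{w,\alpha}$-walk. To bound the activation probability I would introduce a BRW in which, at each time step, every currently living particle moves according to $\pi_{w,\alpha}$ and simultaneously produces one independent offspring moving with the same law; by construction the frog activation process is dominated by the visit set of this BRW. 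The expected number of visits of the BRW to $x$, starting from one particle at $0$, is $f(x) := \sum_{t \geq 0} 2^t\, \P_0(S_t = x)$. For $\alpha$ close to $1$, the walk drifts strongly in $+e_1$, while its time-reversal (which governs $G_{w,\alpha}(\cdot, 0)$) drifts in $-e_1$; a large-deviation / Cramér-transform computation then shows that the cones of concentration of $f$ and of $G_{w,\alpha}(\cdot,0)$ are disjoint and that $\sum_x f(x)\,G_{w,\alpha}(x,0) < \infty$, yielding finitely many returns a.s.\ and hence transience.

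The main obstacle in (i) is the passage from $\alpha = 0$ to $\alpha > 0$: the balanced 2D frog model is only barely recurrent, and the perturbation must be localised to finite time windows via the renormalisation before any Radon--Nikod\'ym estimate becomes useful. The main obstacle in (ii) is balancing the branching factor $2$ against the decay of the walk's transition kernel: the hypothesis $\alpha$ close to $1$ is exactly what makes the product $f(x)\,G_{w,\alpha}(x,0)$ summable, and a naive BRW coupling with more branching would diverge for every $\alpha<1$, so one has to set up the BRW as economically as possible.
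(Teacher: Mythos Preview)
Your plan for part~(i) is essentially the paper's argument and is sound. In fact in $d=2$ no renormalisation is needed: the paper simply declares a site $x$ open if the single frog at $x$ visits all four neighbours, which has probability $1$ at $\alpha=0$ (the $\pi_{w,0}$-walk is a recurrent planar walk) and stays above $p_c(2)$ for $\alpha$ small by a Fatou-type continuity argument. From there the density of the supercritical cluster (their Lemma~2.5) together with the one-frog hitting estimate $\P(x\to 0)\ge c\,n^{-1/2}$ for $x\in\{-n\}\times[-\gamma\sqrt n,\gamma\sqrt n]$ finishes the proof via the zero-one law. Your absolute-continuity/$O(L^2)$ window is a heavier version of the same idea and would also work.

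Part~(ii), however, has a real gap. Your binary BRW bound gives
\[
\sum_{x\in\Z^2} f(x)\,G_{w,\alpha}(x,0)
=\sum_{t,s\ge 0} 2^{t}\sum_{x}\P_0(S_t=x)\,\P_0(S_s=-x)
=\sum_{n\ge 0}(2^{n+1}-1)\,\P_0(S_n=0),
\]
so finiteness is equivalent to $2\,\e^{-I(0)}<1$, where $I(0)=-\log\bigl(w\sqrt{1-\alpha^2}+1-w\bigr)$ is the Cram\'er rate at the origin for the $\pi_{w,\alpha}$-walk. As $\alpha\uparrow 1$ this tends to $-\log(1-w)$, and the condition becomes $w>\tfrac12$. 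For every fixed $w\le\tfrac12$ your sum diverges for \emph{all} $\alpha\in(0,1]$, so the binary BRW is already too generous; no choice of $\alpha$ saves it. You correctly sensed this danger in your last paragraph, but the ``most economical naive BRW'' is still not economical enough.

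The paper avoids this by exploiting the structure of the frog model \emph{inside each hyperplane} $H_n=\{x_1=n\}$. Projected to $e_1$, a frog leaves its current hyperplane at each step with probability~$w$; the activity it generates before leaving is a one-dimensional frog model with death of parameter $s=1-w$, whose total progeny $\xi$ has $\E[\xi]<\infty$ (their Lemma~2.7). This yields a one-dimensional BRW on $\Z$ with offspring law $\xi$ and step $\pm1$ with probabilities $\tfrac{1\pm\alpha}{2}$, for which
\[
\mu(\theta)=\tfrac12\bigl((1-\alpha)\e^{\theta}+(1+\alpha)\e^{-\theta}\bigr)\,\E[\xi]
\]
can be pushed below $1$ for $\alpha$ close to $1$ (take $\theta=\log(2\E[\xi])$), regardless of~$w$. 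The point is that the branching factor per \emph{hyperplane crossing} is the finite constant $\E[\xi]$, not the factor $2$ per \emph{time step} that your coupling uses; this is exactly the extra input needed to cover $w\le\tfrac12$.
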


\begin{thm}\label{thm_d=2_arbitrary_drift}
 Let $d=2$ and $\alpha \in (0,1)$.
 \begin{enumerate}
  \item\label{thm_d=2_arbitrary_drift_i} There exists $w_r = w_r(\alpha) > 0$ such that the frog model $\fm(d,\pi_{w,\alpha})$ is recurrent for all $0 \leq w \leq w_r$.
  \item\label{thm_d=2_arbitrary_drift_ii} There exists $w_t = w_t(\alpha) < 1$ such that the frog model $\fm(d,\pi_{w,\alpha})$ is transient for all $w_t \leq w \leq 1$.
 \end{enumerate}
\end{thm}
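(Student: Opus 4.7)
Both endpoints of the parameter $w$ are already understood: at $w=0$ all walks are confined to $\{0\}\times\Z$ and the model reduces to the recurrent one-dimensional symmetric frog model, while at $w=1$ the walks live on $\Z\times\{0\}$ with drift $\alpha$ and the model is transient by \cite{GS09}. Mirroring Theorem~\ref{thm_d=2_arbitrary_weight}, the plan is to prove part (i) by a percolation-type stability argument around $w=0$, and part (ii) by a branching-random-walk coupling that degenerates to the 1D drifted model as $w\uparrow 1$.

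For part (i), I would run a coarse-grained comparison with supercritical Bernoulli site percolation. Partition the $e_2$-axis into blocks $B_k=\{0\}\times[kL,(k+1)L)$ and call $B_k$ \emph{good} if, within a deterministic time window, the frogs awakened in $B_k$ (a) wake every sleeping frog of $B_k$ and (b) send at least one descendant back to the origin. At $w=0$ the entire process lives on $\{0\}\times\Z$, so the probability of a good block tends to $1$ as $L\to\infty$ by recurrence of one-dimensional SRW. The event defining goodness depends only on finitely many walks over a finite window, so its probability is continuous in $w$, and the bound therefore survives for $w$ sufficiently small. A standard stochastic-domination step then produces infinitely many good blocks, each contributing an independent positive chance of returning to the origin, and recurrence follows by Borel--Cantelli.

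For part (ii), I would dominate the frog model by an auxiliary branching random walk $\widetilde Z$: every active particle moves with kernel $\pi_{w,\alpha}$ and, whenever it enters a site it has not yet visited, it spawns an independent copy there. This stochastically dominates the occupation measure of the frog model, so it is enough to show that the expected number of visits of $\widetilde Z$ to the origin is finite. The $e_1$-projection of each walker is a drifted 1D walk with drift $w\alpha>0$, so single-walker return probabilities from a point $x=(x_1,x_2)$ decay exponentially as $x_1\to+\infty$. Combined with Green's-function estimates for the expected range along the drift axis, one can hope for a finite total expected visit count, using $w=1$ (where $\widetilde Z$ collapses to a 1D object with finite expected visit count by \cite{GS09}) as a reference.

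\textbf{Main obstacle.} Part (ii) is the harder direction. The naive BRW domination is lossy: a 2D random walk visits a linear number of new sites per unit time, so offspring production is abundant even under strong drift, and a bare-hands expectation computation will diverge. The ``refined'' coupling alluded to in the abstract is presumably needed here---for example, triggering offspring only at regeneration times of the $e_1$-projection, or pruning to offspring with a realistic chance of ever reaching the origin---together with a quantitative control of how the resulting expected visit count degrades as $w$ moves away from $1$.
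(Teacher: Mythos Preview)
The scheme has a structural gap. Your blocks sit on the $e_2$-axis and ``good'' requires a descendant to reach the origin within a fixed time window. At $w=0$ this probability is close to $1$ for a large enough window, but the window needed grows with $k$ (the frog cluster spreads at most linearly), and for any fixed $w>0$ the probability that a walker from $(0,kL)$, or the frogs it wakes over a bounded horizon, ever hits the origin decays in $k$: the drift carries everything into $\{x_1>0\}$, and the Green's function of the drifted walk satisfies $G\bigl((0,y),0\bigr)\le\e^{-c(w,\alpha)|y|}$. So the continuity in $w$ you invoke is pointwise in $k$ but not uniform, and no single $w_r>0$ makes the probability of ``good'' bounded below for all $k$. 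Even granting that, the ``good'' events involve the global trajectory of the frog cluster and are far from independent, so Borel--Cantelli does not apply.

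The paper's argument has a different geometry. It tiles $\Z^2$ by vertical segments $Q_x=\{x_1\}\times[(2K+1)x_2-K,(2K+1)x_2+K]$ and declares $x\in\Z^2$ open if frog paths inside $Q_x$ connect its centre $q_x$ to the four neighbouring centres $q_{x\pm e_1},q_{x\pm e_2}$; this is a genuine two-dimensional independent site percolation. Lemma~\ref{lemma_d_2_arbitrary_drift_percolation_parameter_bound} shows $\liminf_{w\to 0}p(K,w)\to 1$ as $K\to\infty$, so the percolation is supercritical for suitable $K$ and small $w$. Density of the infinite cluster (Lemma~\ref{percolation_density}) then yields order $\sqrt n$ activated frogs in $\fb_n=\{x_1=-n,\,|x_2|\lesssim\sqrt n\}$, and each of these reaches the origin with probability $\gtrsim n^{-1/2}$ by Lemma~\ref{lemma_hitting_probability_RW_drift}. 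The essential mechanism you are missing is that the returning frogs must be placed \emph{behind} the origin in the drift direction, where the drift helps rather than hurts.

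\textbf{Part (ii).} You have correctly isolated the obstacle: the branch-at-every-new-site BRW is too fertile and no expectation bound survives. The paper's refinement is specific and is not a regeneration-time pruning. For a particle at $x$, let $L_x=\{y:y_i=x_i\text{ for }2\le i\le d\}$ be the $e_1$-line through $x$; run the one-dimensional frog model inside $L_x$ and take as offspring all sites of $\Z^d\setminus L_x$ visited by those line-frogs, with multiplicity. Lemma~\ref{lemma_1d_fm} gives an exponential tail for the number of line-frogs activated to the left of $x$, while each line-frog makes on average $O(1-w)$ non-$e_1$ steps per unit time; together these force the Laplace functional $\mu=\E\bigl[\sum_{i\in D_1}\e^{-\theta X_1^i}\bigr]\to 0$ as $w\to 1$ for suitable small $\theta>0$. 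Hence $\mu<1$ for $w$ near $1$, and the associated nonnegative martingale $M_n=\mu^{-n}\sum_{i\in D_n}\e^{-\theta X_n^i}$ forces transience of the BRW and therefore of the frog model. Your proposal names the gap but does not close it.
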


In dimension $d \geq 3$ the frog model is also recurrent if the transition probabilities are almost balanced. Further, for any fixed drift parameter $\alpha \in (0,1]$ it is transient if the weight $w$ is close to $1$. However, in contrast to $d=2$, for fixed $w \in [0,1)$ there is not always a transient regime. This follows from Theorem~\ref{thm_d>2_arbitrary_drift_i} below.

\begin{thm}\label{thm_d>2_arbitrary_weight}
 Let $d \geq 3$ and $w \in (0,1)$. 
 There exists $\alpha_r = \alpha_r(d,w) > 0$ such that the frog model $\fm(d,\pi_{w,\alpha})$ is recurrent for all $0 \leq \alpha \leq \alpha_r$.
\end{thm}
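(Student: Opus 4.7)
The plan is to use a percolation coupling in the spirit of the strategy announced in the introduction: build a coarse-grained finite-range dependent bond process on $\Z^d$ whose supercriticality is implied by smallness of $\alpha$, use the resulting infinite oriented cluster to wake up infinitely many frogs in the half-space $\{x_1\le -1\}$, and finish via a Borel--Cantelli argument exploiting the positive probability that a drifted walk from each of these sites hits $0$.

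The first step is the renormalization. Tile $\Z^d$ by boxes of side $L$ (to be chosen later), mark the corners $y_{\mathbf k}=L\mathbf k$, and declare the oriented bond from $y_{\mathbf k}$ to each lattice neighbor $y_{\mathbf k'}$ \emph{open} if the local event $A_{\mathbf k,\mathbf k'}$ holds: assuming only the frog at $y_{\mathbf k}$ is initially awake, within a finite time window $T=T(L)$ the activation wave reaches $y_{\mathbf k'}$, using only trajectories of frogs initially located in a bounded neighborhood of the two boxes. Since $A_{\mathbf k,\mathbf k'}$ depends on finitely many walk steps of finitely many frogs, the induced edge field is finite-range dependent.

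The second step is to ensure supercriticality for $\alpha$ small. At $\alpha=0$ the walk is a balanced axes-weighted SRW, and by a direct cascading argument (one frog wakes $\Omega(L^2)$ frogs inside the box, each of which has positive probability to propagate further, iterating) one gets $\mathbb P(A_{\mathbf k,\mathbf k'})\to 1$ as $L\to\infty$, with $T(L)$ of order $L$. Because the joint law of any bounded number of walk steps depends continuously on $\alpha$ and $T(L)$ is finite, $\mathbb P(A_{\mathbf k,\mathbf k'})$ is also continuous at $\alpha=0$ for each fixed $L$. Choosing $L$ large and then $\alpha\le\alpha_r(d,w)$ small therefore pushes $\mathbb P(A_{\mathbf k,\mathbf k'})$ above the threshold required by the Liggett--Schonmann--Stacey comparison to dominate a supercritical Bernoulli percolation oriented in each of the $2d$ coordinate directions. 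The resulting infinite oriented cluster contains an infinite family of awoken sites $(y^{(j)})_{j\ge 1}$ with $y^{(j)}_1\to-\infty$ and positive density in every slice $\{x_1=-k\}$.

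The third step is the return argument. Reserve an independent portion of each frog's random bits for a ``return walk'' $\tilde S^{(j)}$ after activation (a Bernoulli splitting). Standard local CLT / Gaussian estimates for the walk with drift $w\alpha$ in direction $e_1$ give $\mathbb P(\tilde S^{(j)}\text{ hits }0)\ge c\,|y^{(j)}_1|^{-(d-1)/2}$; summing against the positive density in each slice yields a divergent series, and a second-moment (Kochen-Stone) Borel--Cantelli then produces infinitely many visits to $0$ almost surely. \textbf{The main obstacle} is arranging the independence needed for this last step: both the activation events $A_{\mathbf k,\mathbf k'}$ and the return events would naively rely on the same frog trajectories, so one must split each frog's random bits into two independent sequences---one reserved for propagating activation and one for the subsequent hitting attempt---and show that the reduced frog model is still recurrent. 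Making the block estimates quantitative enough that this reduction does not destroy supercriticality is where the bulk of the technical work would lie.
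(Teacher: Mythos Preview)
Your overall strategy---renormalize, couple with percolation to propagate activation into the half-space $\{x_1<0\}$, then use a hitting estimate of order $n^{-(d-1)/2}$---matches the paper's. Two differences are worth noting.

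First, the paper's block construction yields \emph{genuinely independent} site variables rather than finite-range dependent ones, so no Liggett--Schonmann--Stacey comparison is needed. The trick is to declare $x$ open if for each unit vector $e$ there is a chain $q_x\to z_1\to\cdots\to z_n\to q_{x+e}$ with the \emph{intermediate frogs} $z_i$ all initially in the cube $Q_x$ (their trajectories may leave $Q_x$). Openness of $x$ then depends only on the family $\{(S^z_n)_{n\ge0}: z\in Q_x\}$, and disjoint cubes give independent events. To show $p(K,0)\to 1$ the paper does not use a direct cascading argument but invokes the shape theorem for the symmetric frog model (applied to a lazy $(d-1)$-dimensional projection), obtaining at least $cK^{d-1}$ activated frogs in $Q_0$ by time $K$; each then reaches a neighbouring centre with probability at least $cK^{-(d-2)}$ by a Green's function bound. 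Your ``$\Omega(L^2)$'' should read $\Omega(L^{d-1})$ for general $d$.

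Second---and this dissolves what you flag as the main obstacle---the paper does not need any independence between activation events and return events, so no bit-splitting and no Kochen--Stone argument is required. Instead it uses the elementary first-moment bound (Lemma~\ref{lemma_sum_rv}): if each $X_i$ is Bernoulli with $\P(X_i=1)\ge p$, with no independence assumed, then $\P\bigl(n^{-1}\sum_{i=1}^n X_i\ge \varepsilon\bigr)\ge p-\varepsilon$. Applied to $X_i=\1\{\text{some activated frog in the $i$-th slab hits }0\}$, this gives $\P(0\text{ visited infinitely often})>0$, and the Kosygina--Zerner zero-one law (Theorem~\ref{lemma_zero_one_law}) upgrades positive probability to probability one. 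The ``bulk of the technical work'' you anticipated is therefore unnecessary once the zero-one law is in hand.
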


\begin{thm}\label{thm_d>2_arbitrary_drift}
 Let $d\geq 3$ and $\alpha \in (0,1)$.
 \begin{enumerate}
  \item\label{thm_d>2_arbitrary_drift_i} There exists $w_r > 0$, independent of $d$ and $\alpha$, such that the frog model $\fm(d,\pi_{w,\alpha})$ is recurrent for all $0 \leq w \leq w_r$.
  \item\label{thm_d>2_arbitrary_drift_ii} There exists $w_t = w_t(\alpha) < 1$, independent of $d$, such that the frog model $\fm(d,\pi_{w,\alpha})$ is transient for all $w_t \leq w \leq 1$.
 \end{enumerate}
\end{thm}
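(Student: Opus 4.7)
The plan is to use a comparison with supercritical oriented percolation on the transverse sublattice $\Z^{d-1}$, as suggested by the abstract and in analogy with the proof of Theorem~\ref{thm_d=2_arbitrary_drift_i}. When $w$ is small, each active frog performs many steps of a balanced random walk in the transverse directions before its $e_1$-coordinate drifts appreciably, so that a transverse neighborhood of each active frog fills up with further active frogs. Concretely, I would fix scales $L, T$ (possibly depending on $w$ and $d$), partition $\Z^d$ into blocks of transverse side length $L$ and $e_1$-width $L$, and declare a block \emph{good} if an activated frog inside it wakes up frogs in each of its transverse neighbor blocks within time $T$ while the $e_1$-coordinate stays controlled. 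Large-deviation bounds on the $e_1$-displacement (depending only on $w$ and $\alpha$) combined with hitting-time estimates for the balanced walk on $\Z^{d-1}$ would give $\P[\text{good}] \to 1$ as $w \to 0$ uniformly in $\alpha$ and $d$. A Liggett--Schonmann--Stacey type domination argument then embeds a supercritical Bernoulli site percolation on $\Z^{d-1}$ (which is supercritical because $d - 1 \geq 2$), producing an infinite cluster of activated frogs within a bounded $e_1$-slab around the origin. A Borel--Cantelli argument using Green's-function lower bounds from slab sites back to the origin then yields infinite returns to $0$.

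\textbf{Part (ii): Transience for $w \geq w_t$.}
The plan is to dominate the frog process by a branching random walk. Each frog ever activated contributes an expected $G^{(w,\alpha)}(x, 0)$ visits to the origin, where $x$ is its starting position, and the activation probability $\P[x \text{ activated}]$ can itself be bounded via a BRW launched from the origin. One obtains an estimate of the form
\[
\E\bigl[\#\{\text{visits of the frog model to } 0\}\bigr] \leq \sum_n c_n \, p_n(0,0),
\]
where $c_n$ grows at most geometrically with a rate determined by the BRW branching rule, and where $p_n(0,0) \leq e^{-n I(w,\alpha)}$ with $I(w, \alpha) = -\log\bigl(w\sqrt{1-\alpha^2} + 1 - w\bigr)$ the large-deviation rate for the $(w, \alpha)$-walk to be at $0$ at time $n$. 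Crucially, $I(w,\alpha)$ is independent of $d$ because it only involves the $e_1$-marginal of the walk. For $w$ close enough to $1$, we have $I(w,\alpha)$ exceeding the geometric growth rate of $c_n$, so the series converges and Borel--Cantelli yields transience. The threshold $w_t(\alpha) < 1$ depends only on $\alpha$, not on $d$, because both $I(w,\alpha)$ and the BRW branching rate are dimension-free.

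\textbf{Main obstacle.} The principal technical difficulty is obtaining the $d$-uniformity of the thresholds $w_r$ and $w_t$. For (i), the block scales $L$ and $T$ must be tuned to $d$ to compensate for the factor $1/(d-1)$ in individual transverse transition probabilities, and one must verify that such a tuning exists which keeps the threshold on $w$ dimension-free; this hinges on the balanced walk on $\Z^{d-1}$ remaining effective at spreading the set of active frogs uniformly in $d \geq 3$. For (ii), the $d$-independence is more natural since the branching in the frog model reflects activation of sleeping frogs (at most one new activation per step per walker) and the large-deviation rate for the $e_1$-component does not involve $d$; constructing the BRW coupling so that this dimension-free behaviour survives into the final bound is the main point to check.
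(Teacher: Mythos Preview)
Your overall architecture matches the paper's: percolation in the transverse hyperplanes for (i), branching random walk domination for (ii). But in each part there is a concrete gap between what you outline and what is actually needed.

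\textbf{Part (ii).} Your scheme bounds the expected number of visits by $\sum_n c_n\,e^{-nI(w,\alpha)}$ with $c_n$ growing geometrically. If the growth rate of $c_n$ comes from ``at most one new activation per step per walker'', you are comparing with a binary branching random walk, so $c_n\sim 2^n$. But $I(w,\alpha)\le -\log\sqrt{1-\alpha^2}$ for every $w\le 1$, and $-\log\sqrt{1-\alpha^2}>\log 2$ only when $\alpha>\sqrt{3}/2$. Thus the binary BRW bound cannot give transience for small $\alpha$; indeed the function $g(\alpha)=\min\{1,(2(1-\sqrt{1-\alpha^2}))^{-1}\}$ that emerges from this comparison is exactly the binary-BRW threshold discussed in Section~\ref{open_problems}. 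The paper avoids this by a different BRW: each generation corresponds to the frogs activated \emph{along the line} $L_x=\{y:y_i=x_i,\ i\ge 2\}$ and then leaving that line. The expected branching factor (in the tilted sense, via $\mu=\E\bigl[\sum_{i\in D_1}e^{-\theta X_1^i}\bigr]$) is controlled by the number of off-line steps, which is $O(1-w)$, and by the exponential decay of the one-dimensional frog model to the left (Lemma~\ref{lemma_1d_fm}). This makes $\mu\to 0$ as $w\to 1$ for every fixed $\alpha>0$, which your binary bound cannot achieve.

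\textbf{Part (i).} You propose to tune block scales so that $\P[\text{block is good}]\to 1$ as $w\to 0$, uniformly in $d$. This is the wrong target and is likely false: the transverse walk is simple random walk on $\Z^{d-1}$, whose hitting probabilities for nearby sites decay like $d^{-c}$, so any fixed block-scale scheme has $\P[\text{good}]$ bounded away from $1$ as $d\to\infty$. The paper does \emph{not} try to push $\P[\text{good}]$ to $1$ uniformly in $d$. Instead it exploits $p_c(d)\to 0$ (Lemma~\ref{lemma_pc_high_d}): it suffices to show $\P[\text{good}]\ge\beta$ for a single $\beta>0$ independent of $d$, and then take $d$ large enough that $p_c(d-1)<\beta$. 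The uniform lower bound $\beta$ is obtained by a branching-process argument inside a $3\times\cdots\times 3$ cube (Lemmas~\ref{lemma_recurrence_high_d_percolation_parameter_bound} and~\ref{lemma_recurrence_high_d_K_d}): a frog in such a cube has roughly $d$ neighbours to activate, which compensates the $1/d$ hitting probabilities and yields a supercritical mean offspring independent of $d$, provided the death probability $w$ is below a fixed threshold (here $w<\tfrac14$). The finitely many remaining dimensions below $d_0$ are handled separately with a $d$-dependent $w_r(d)$ (Proposition~\ref{prop_d>2_arbitrary_drift_small_d}), and the overall $w_r$ is the minimum. Your proposal identifies the obstacle correctly but does not supply either of these two ingredients.
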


The results are graphically summarised in Figure~\ref{phase_diagram}. Note that the above theorems only make statements about the existence of recurrent, respectively transient regimes. We do not describe their shapes, as might be suggested by the curves depicted in Figure~\ref{phase_diagram}. For a discussion about their shape we refer the reader to Conjecture~\ref{con_critical_curve} at the end of this paper.

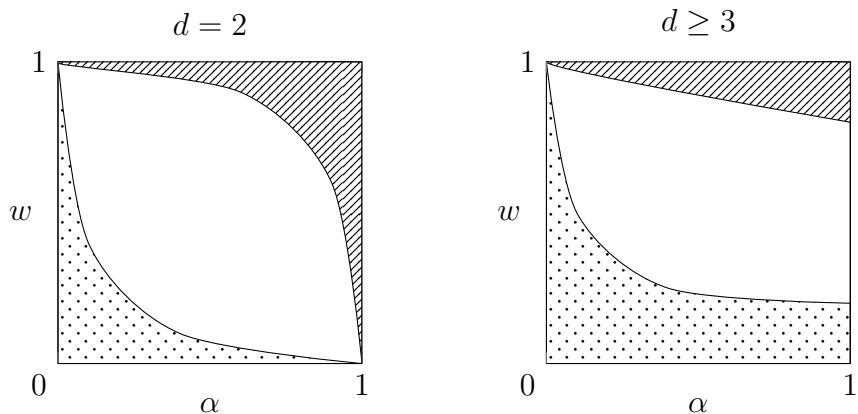
\begin{figure}[h]
\centering
\begin{tikzpicture}[baseline=0pt]
\begin{axis}[
    title = {$d = 2$},
    area legend,
    axis y line =box, 
    axis x line =box, 
    xtick={1},
    xticklabels={$1$},
    ytick={1},
    yticklabels={$1$},
    xlabel=$\alpha$,
    ylabel=$w$,
    every axis y label/.style={at={(ticklabel cs:0.5)},rotate=0}, 
    every axis x label/.style={at={(ticklabel cs:0.5)},rotate=0}, 
    extra x ticks={0},
    extra x tick labels={$0$},
    extra x tick style={xticklabel style={anchor=north east}},
    xmin=0,
    xmax=1,
    ymin=0,
    ymax=1,
    smooth,
    axis on top,
    x=4cm,
    y=4cm
    ]
\addplot[mark=none, pattern=my crosshatch dots, draw=black]  coordinates {(0,1) (0.1,0.4) (0.4, 0.1) (1,0) } \closedcycle; \label{recurrent} 
\addplot[mark=none, pattern=north east lines, draw=black]  coordinates {(1,0) (0.9, 0.6) (0.6,0.9) (0,1)  (1,1)} \closedcycle; \label{transient} 
\end{axis}                    
\end{tikzpicture}%
\hskip 40pt
\begin{tikzpicture}[baseline=0pt]
 \begin{axis}[
    title = {$d \geq 3$},
    axis y line =box, 
    axis x line =box, 
    xtick={1},
    xticklabels={$1$},
    ytick={1},
    yticklabels={$1$},
    xlabel=$\alpha$,
    ylabel=$w$,
    every axis y label/.style={at={(ticklabel cs:0.5)},rotate=0},
    every axis x label/.style={at={(ticklabel cs:0.5)},rotate=0},
    extra x ticks={0},
    extra x tick labels={$0$},
    extra x tick style={xticklabel style={anchor=north east}},
    xmin=0,
    xmin=0,
    xmax=1,
    ymin=0,
    ymax=1,
    smooth,
    axis on top,
    x=4cm,
    y=4cm
    ]
\addplot[mark=none, pattern=my crosshatch dots, draw=black]  coordinates {(0,1) (0.1,0.5) (0.4, 0.25) (1,0.2) } \closedcycle; 
\addplot[mark=none, pattern=north east lines, draw=black]  coordinates {(1,0.8) (0,1)  (1,1)} \closedcycle;
\end{axis}
\end{tikzpicture}%
\caption{Phase diagram for the frog model $\fm(d,\pi_{w,\alpha})$: the recurrent regime is marked by \ref{recurrent}, the transient one by \ref{transient}.}
\label{phase_diagram}
\end{figure}

These results show that, in contrast to $d=1$, recurrence and transience do depend on the drift in every dimension $d \geq 2$. This disproves the last conjecture in \cite{GS09} that some condition on the moments of $\eta$ would separate transience from recurrence as in the one-dimensional case.

The paper is organised as follows. In Section~\ref{preliminaries} we introduce notation used throughout the article, and collect some basic facts and results about random walks, percolation and the frog model, which are needed in the proofs. The proofs of the main results are presented in Section~\ref{proofs}. Further questions and conjectures are discussed in Section~\ref{open_problems}.

\section{Preliminaries}\label{preliminaries}

\subsection*{Notation}
We refer to the frog model on $\Z^d$ with transition probabilities $\pi$ as $\fm(d, \pi)$. 
For $w, \alpha \in [0,1]$ and every vertex $x \in \Z^d$ let $(S_n^x)_{n \in \N_0}$ be a discrete time random walk on the lattice~$\Z^d$ starting at $x$ which moves according to the transition function $\pi_{w,\alpha}$ given by \eqref{transition_function}. Then $(S_n^x)_{n \in \N_0}$ describes the trajectory of the frog initially at vertex $x$. It starts to follow this trajectory once it is activated. We assume that the set $\{(S_n^x)_{n \in \N_0} \colon x \in \Z^d\}$ of random walks is independent, i.e.~active particles do not interact. Notice that this set of trajectories entirely determines the behaviour of the frog model. A formal definition of the frog model can be found in \cite{AMP02}. 
Note that $\pi_{1/d, 0}$ corresponds to a simple random walk on $\Z^d$. We write $\pi_{\text{sym}}$ in this case.

We refer to the frog that is initially at vertex $x\in \Z^d$ as ``frog~$x$''. 
We write $x \to y$ if frog $x$ (potentially) ever visits $y$, i.e.~$y \in \{S_n^x \colon n \in \N_0\}$.
For $x,y \in \Z^d$ and $A \subseteq \Z^d$ we say that there exists a frog path from $x$ to $y$ in $A$ and write $x \fp{A} y$ if there exist $n\in \N$ and $z_1, \ldots, z_n \in A$ such that $x \to z_1$, $z_i \to z_{i+1}$ for all $1\leq i < n$ and $z_n \to y$, or if $x \to y$ directly. Note that $x,y$ are not necessarily in $A$. Also the trajectories of the frogs $z_i$, $1 \leq i \leq n$, do not need to be in $A$.
For $x \in \Z^d$ we call the set 
\begin{equation}\label{def_frog_cluster}
\fc_x=\bigl\{y \in \Z^d \colon x \fp{\Z^d} y\bigr\} 
\end{equation}
the frog cluster of $x$. 
Note that, if frog $x$ ever becomes active, then every frog $y \in \fc_x$ is also activated. Observe that, as we only deal with recurrence and transience, the exact activation times are not important, but we are only interested in whether or not a frog is activated.

Further, we often use $(d-1)$-dimensional hyperplanes $H_n$ in $\Z^d$ defined by
\begin{equation}\label{definition_hyperplane}
H_n := \{x \in \Z^d \colon x_1=n\}
\end{equation}
for $n \in \Z$.


\subsection*{Some facts about random walks}

We need to deal with hitting probabilities of random walks on $\Z^d$. For $x,y \in \Z^d$ recall that $\{x\to y\}$ denotes the event that the random walk started at $x$ ever visits the vertex $y$. Analogously, for $A\subseteq \Z^d$ we write $\{x \to A\}$ for the event that the random walk started at $x$ ever visits a vertex in $A$.

\begin{lemma}\label{lemma_hitting_probability_SRW}
For $d \geq 3$ and $w \in (0,1)$ consider a random walk on $\Z^d$ with transition function~$\pi_{w,0}$. There exists a constant $c=c(d,w) >0$ such that for all $x\in \Z^d$
\begin{equation*}
\P(0\to x) \geq c \lVert x \rVert_2^{-(d-2)},
\end{equation*}
where $\lVert x \rVert_2 = \bigl(\sum_{i=1}^{d} x_i^2\bigr)^{1/2} $ is the Euclidean norm.
\end{lemma}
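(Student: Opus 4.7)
The plan is to derive this as a standard Green function estimate for centered, finite-range random walks on $\Z^d$. Let $G(0,x) = \sum_{n\geq 0}\P(S_n^0 = x)$ denote the Green function of the walk with kernel $\pi_{w,0}$. Applying the strong Markov property at the first visit to $x$ yields $G(0,x) = \P(0\to x)\,G(x,x) = \P(0\to x)\,G(0,0)$, so it suffices to prove that $G(0,0)<\infty$ and $G(0,x) \geq c'\lVert x\rVert_2^{-(d-2)}$ for some constant $c' = c'(d,w)>0$.

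The walk $(S_n^0)$ is centered since $\pi_{w,0}(e) = \pi_{w,0}(-e)$ for every $e\in\mathcal{E}_d$; it is irreducible on $\Z^d$ because $w\in(0,1)$; and its step distribution is nearest neighbour and hence bounded. Its covariance matrix is the diagonal matrix $\Sigma = \operatorname{diag}(w,\tfrac{1-w}{d-1},\ldots,\tfrac{1-w}{d-1})$, whose eigenvalues are bounded away from $0$ and $\infty$ in terms of $d$ and $w$. In particular, the positive definite quadratic form $Q(y)=y^\top\Sigma^{-1}y$ is comparable to $\lVert y\rVert_2^2$, again with constants depending only on $d$ and $w$.

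I would now invoke the local central limit theorem for lattice random walks with finite range, zero mean, and period $2$: uniformly in $x\in\Z^d$, for $n\to\infty$ with $n\equiv \lVert x\rVert_1\pmod 2$,
\[
\P(S_n^0 = x) = \frac{2}{(2\pi n)^{d/2}\sqrt{\det\Sigma}}\,e^{-Q(x)/(2n)} + o(n^{-d/2}),
\]
while the left-hand side vanishes when $n\not\equiv \lVert x\rVert_1\pmod 2$. Plugging in $n$ of order $\lVert x\rVert_2^2$ makes both the exponential factor and $Q(x)/n$ of order one, so $\P(S_n^0 = x)\geq c_1\lVert x\rVert_2^{-d}$ for all $n$ of the correct parity in an interval of length $\asymp \lVert x\rVert_2^2$. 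Summing gives $G(0,x)\geq c_2\lVert x\rVert_2^{-(d-2)}$ for all $x$ with $\lVert x\rVert_2$ large enough, while the same theorem at $x=0$ yields $\P(S_n^0 = 0) = O(n^{-d/2})$, which is summable in $d\geq 3$ and gives $G(0,0)<\infty$. For $x$ in any fixed ball around the origin, irreducibility yields a uniform positive lower bound on $\P(0\to x)$, completing the argument.

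The main technical point is a uniform-in-$x$ local CLT with a controlled error term, so that the lower bound $\P(S_n^0=x)\geq c_1 n^{-d/2}$ is valid simultaneously for the whole window $n\in [c_3\lVert x\rVert_2^2,\,c_4\lVert x\rVert_2^2]$ of the correct parity; this is classical (see e.g.\ Spitzer's book). Beyond that, the argument is routine once one has verified that the walk is aperiodic modulo parity and that $\Sigma$ is non-degenerate, both of which hold here for every $w\in(0,1)$.
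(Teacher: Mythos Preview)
Your proof is correct and follows essentially the same route as the paper: the paper does not spell out a proof but refers to \cite[Theorem~2.4]{AMP02}, \cite[Lemma~2.4]{AMP02pt} and \cite[Theorem~2.1.3]{LL10}, the last of which is precisely the local central limit theorem you invoke, and the cited proofs proceed via the same Green-function identity $G(0,x)=\P(0\to x)\,G(0,0)$ together with the LCLT lower bound summed over $n\asymp\lVert x\rVert_2^2$. Your handling of the period-$2$ parity issue and the non-isotropic covariance $\Sigma$ is exactly the adaptation the paper has in mind when it says the simple-random-walk proof ``can immediately be generalised''.
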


A proof of the lemma for the simple random walk, i.e.~with transition function $\pi_{\text{sym}}$, can e.g.~be found in \cite[Theorem~2.4]{AMP02} and \cite[Lemma~2.4]{AMP02pt}. The proof can immediately be generalised to our set-up using \cite[Theorem~2.1.3]{LL10}.

\begin{lemma}\label{lemma_hitting_probability_RW_drift}
For $d \geq 1$ and $\alpha,w \in (0,1)$ consider a random walk on $\Z^d$ with transition function~$\pi_{w,\alpha}$. Then for each $\gamma > 0$ there is a constant $c = c(d, \gamma, w, \alpha) >0$ such that for all $n \in \N$ and $x \in \Z^d$ with $x_1=-n$ and $\lvert x_i \rvert \leq \gamma\sqrt{n}$, $2 \leq i \leq d$, it holds that
\begin{equation*}
 \P(x \to 0) \geq c n^{-(d-1)/2}.
\end{equation*}
\end{lemma}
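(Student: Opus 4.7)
The plan is to combine a local central limit theorem with the standard Green's function decomposition of hitting probabilities. By translation invariance, $\P(x \to 0) = \P_0(S_t = -x \text{ for some } t \geq 0)$, where $(S_t)_{t\geq 0}$ denotes the walk started at $0$ with step distribution $\pi_{w,\alpha}$. This walk has mean step $\mu = (w\alpha, 0, \ldots, 0)$ and a diagonal, positive-definite covariance matrix $\Sigma$. Because $\mu \neq 0$, the walk is transient and the Green's function $G := \sum_{t\geq 0} \P_0(S_t = 0)$ is finite.

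Decomposing at the first visit to $y := -x = (n, -x_2, \ldots, -x_d)$ via the strong Markov property, together with the translation invariance of the return kernel, yields
\begin{equation*}
\P_0\bigl(S_t = y \text{ for some } t\bigr) \;=\; \frac{1}{G}\sum_{t=0}^{\infty}\P_0(S_t = y).
\end{equation*}
It therefore suffices to bound the sum on the right from below by $c\, n^{-(d-1)/2}$.

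To this end, let $t^{\ast} := n/(w\alpha)$ and $I_n := \bigl\{t \in \N : |t - t^{\ast}| \leq \sqrt{n}\bigr\}$. For $t \in I_n$ we have $y - t\mu = (n - w\alpha t, -x_2, \ldots, -x_d)$, whose squared Euclidean norm is of order $n$ uniformly in $t$, using both the width of $I_n$ and the hypothesis $|x_i| \leq \gamma\sqrt{n}$. Since $t = \Theta(n)$, the quadratic form $(y-t\mu)^T \Sigma^{-1}(y-t\mu)/(2t)$ is bounded by a constant depending only on $d, \gamma, w, \alpha$. The local CLT for the $d$-dimensional lattice walk then gives $\P_0(S_t = y) \geq c_1 n^{-d/2}$ for each $t \in I_n$ of the correct parity. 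Summing over the $\Theta(\sqrt{n})$ admissible values of $t$ produces $\sum_t \P_0(S_t = y) \geq c_2 n^{-(d-1)/2}$, which after dividing by $G$ yields the claim.

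The main issue to be careful about is the period-$2$ structure of the walk: since each jump changes the coordinate sum by $\pm 1$, the event $\{S_t = y\}$ is possible only when $t \equiv n - x_2 - \cdots - x_d \pmod 2$. This halves the number of terms in the sum but still leaves $\Theta(\sqrt{n})$ of them, so the bound is unaffected. Beyond this, the only serious ingredient is a local CLT for a lattice random walk with non-zero drift and bounded second moments, which is standard; the remaining calculation is a routine Gaussian estimate controlling the exponential factor uniformly on $I_n$.
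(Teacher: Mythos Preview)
Your argument is correct. The paper does not supply its own proof of this lemma but simply refers to \cite[Lemma~3.1]{DP14}; your approach---the Green's-function identity $\P_0(\tau_y<\infty)=G(0,y)/G(0,0)$ for a transient walk, combined with the local CLT applied at times $t$ within $\sqrt{n}$ of $n/(w\alpha)$ and a parity check---is the standard route to such a bound and matches what one finds in that reference.
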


For a proof see e.g.~\cite[Lemma 3.1]{DP14}. 

\begin{lemma}\label{lemma_hitting_probability_hyperplane}
 For $d \geq 1$ and $\alpha,w \in (0,1]$ consider a random walk on $\Z^d$ with transition function~$\pi_{w,\alpha}$. Then for every $n \in \N$ and $H_{-n}$ as defined in \eqref{definition_hyperplane}
 \begin{equation*}
  \P(0\to H_{-n}) = \Bigl(\frac{1-\alpha}{1+\alpha}\Bigr)^n.
 \end{equation*}
\end{lemma}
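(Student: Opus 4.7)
My plan is to reduce the $d$-dimensional hitting problem to a one-dimensional gambler's-ruin type computation by projecting onto the drift axis. Observe that by the definition of $\pi_{w,\alpha}$ in \eqref{transition_function}, the first coordinate $(S_n^{0,(1)})_{n\in\N_0}$ of the random walk is itself a (possibly lazy) random walk on $\Z$ that increases by $1$ with probability $p := w(1+\alpha)/2$, decreases by $1$ with probability $q := w(1-\alpha)/2$, and remains constant with probability $r := 1-w$. Moreover, the event $\{0 \to H_{-n}\}$ coincides exactly with the event that this one-dimensional walk ever reaches $-n$, so it suffices to show that the latter probability equals $(q/p)^n = \bigl((1-\alpha)/(1+\alpha)\bigr)^n$.

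First I would treat the degenerate cases: if $\alpha=1$ then $q=0$ and the one-dimensional walk is non-decreasing, so the probability is $0$, matching the formula. Assume henceforth $\alpha\in(0,1)$. Let $\rho$ denote the probability that the one-dimensional walk starting at $0$ ever hits $-1$. By spatial homogeneity and the strong Markov property, the probability of hitting $-n$ starting at $0$ equals $\rho^n$, so it remains to show $\rho = q/p$.

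To compute $\rho$, I apply first-step analysis. With probability $q$ the walk reaches $-1$ in one step; with probability $r$ it remains at $0$ and must still reach $-1$; with probability $p$ it jumps to $+1$ and must reach $-1$ from there, which by spatial homogeneity and the strong Markov property has probability $\rho^2$ (first hit $0$, then hit $-1$). This yields
\begin{equation*}
\rho = q + r\rho + p\rho^2,
\end{equation*}
equivalently $p\rho^2 - (p+q)\rho + q = 0$, which factors as $(\rho-1)(p\rho-q)=0$. Hence $\rho \in \{1,q/p\}$.

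The final step is to rule out $\rho = 1$. Since $p > q$, the one-dimensional walk has strictly positive mean drift $p-q = w\alpha > 0$, so by the strong law of large numbers $S_n^{0,(1)}/n \to w\alpha > 0$ almost surely; in particular $S_n^{0,(1)}\to +\infty$ almost surely, so it stays bounded below and fails to reach $-1$ with positive probability. Therefore $\rho = q/p = (1-\alpha)/(1+\alpha)$, and consequently $\P(0 \to H_{-n}) = \rho^n = \bigl((1-\alpha)/(1+\alpha)\bigr)^n$. No serious obstacle is expected; the only subtlety is choosing the correct root of the quadratic, which is handled by the transience argument above.
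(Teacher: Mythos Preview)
Your proof is correct and follows essentially the same approach as the paper: both reduce to the one-dimensional first-coordinate walk, use $\P(0\to H_{-n})=\rho^n$, and derive a quadratic for $\rho=\P(0\to H_{-1})$ via first-step analysis. Your version is in fact more detailed than the paper's, which simply writes the recursion $\rho=\tfrac{1-\alpha}{2}+\tfrac{1+\alpha}{2}\rho^2$ (conditioning on the first $\pm e_1$-step rather than carrying the lazy term) and declares the rest a ``straightforward calculation''; your explicit exclusion of the root $\rho=1$ via the SLLN drift argument fills a step the paper leaves to the reader.
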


\begin{proof}
 As $\P(0\to H_{-n}) = \P(0\to H_{-1})^n$ for $n \in \N$, it suffices to prove the lemma for $n =1$. By the Markov property
 \begin{equation*}
  \P(0\to H_{-1}) = \frac{1-\alpha}{2} + \frac{1+\alpha}{2} \P(0\to H_{-2}).
 \end{equation*}
 The result follows after a straightforward calculation.
\end{proof}


\subsection*{Some facts about percolation}

To prove recurrence we make use of the theory of independent site percolation on $\Z^d$ and therefore give a brief introduction here.
Let $p \in [0,1]$. Every site in $\Z^d$ is independently of the other sites declared open with probability $p$ and closed with probability $1-p$. An open cluster is a connected component of the subgraph induced by all open sites. It is well known that for $d \geq 2$ there is a critical parameter $p_c= p_c(d) \in (0,1)$ such that for all $p > p_c$ (supercritical phase) there is a unique infinite open cluster~$C$ almost surely, and for $p<p_c$ (subcritical phase) there is no infinite open cluster almost surely. Furthermore, denoting the open cluster containing the site~$x \in \Z^d$ by $C_x$, it holds that $\P(\lvert C_x \rvert=\infty)>0$ for $p>p_c$, and $\P(\lvert C_x \rvert=\infty)=0$ for $p<p_c$ and all $x \in \Z^d$. The following lemma states that the critical probability $p_c$ is small for $d$ large.

\begin{lemma}\label{lemma_pc_high_d}
For independent site percolation on $\Z^d$, 
\begin{equation*}
\lim_{d \to \infty} p_c(d) = 0.
\end{equation*}
\end{lemma}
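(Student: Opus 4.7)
The plan is to upper bound $p_c(\Z^d)$ by the critical probability $\vec{p}_c(\Z^d)$ of oriented site percolation on $\Z^d$ (where connections are restricted to directed edges in the positive coordinate directions $+e_1,\dots,+e_d$), and then show $\vec{p}_c(\Z^d)\to 0$ as $d\to\infty$. The first inequality is essentially automatic: any infinite oriented open cluster is in particular an infinite (unoriented) open cluster, so $p_c(\Z^d)\le \vec{p}_c(\Z^d)$.

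To show $\vec{p}_c(\Z^d)\to 0$ I would apply the second moment method to the random variable $X_n$ counting fully open oriented paths of length $n$ starting at the origin. Since there are $d^n$ such paths and each is open with probability $p^{n+1}$, one has $\E[X_n]=p\,(pd)^n$, which grows exponentially for $pd>1$. Combined with a second moment bound — exploiting the fact that two oriented paths from the origin in $\Z^d$ are unlikely to share many vertices when $d$ is large — the Paley--Zygmund inequality then yields $\P(X_n>0)\ge c>0$ uniformly in $n$. Since $\{X_n>0\}$ is a decreasing sequence of events whose intersection is ``the oriented cluster of the origin is infinite'', percolation follows. This gives $\vec{p}_c(\Z^d)\le C/d$ for a constant $C$, and the lemma is proved.

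The main obstacle will be the second moment estimate, i.e.\ the combinatorial control of pairs of oriented paths sharing vertices, since a pair with $k$ extra common vertices contributes weight $p^{2(n+1)-k}$ to $\E[X_n^2]$. The standard trick is to organise the sum according to the ``bifurcation vertex'' of the pair: up to that vertex the paths coincide, and after it they evolve independently, which reduces the estimate to a geometric summation over a single path. Alternatively, one may bypass this calculation by invoking the classical asymptotic $p_c(\Z^d)=(1+o(1))/(2d)$ of Kesten directly, or by referring to standard monographs such as Grimmett's book on percolation, in which case the proof collapses to a single citation.
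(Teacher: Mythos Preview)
Your proposal is correct. The paper does not give a proof of this lemma at all: it merely asserts the sharper statement $p_c(d)=O(d^{-1})$ and refers to \cite[Chapter~1, Theorem~7]{BR06} for a proof. Thus your closing alternative---collapsing everything to a single citation---is precisely what the paper does. Your primary route, the second-moment method applied to oriented site percolation, is a standard and self-contained way to supply the argument and yields the same $O(d^{-1})$ bound; the combinatorial control of pairs of oriented paths that you flag as the main obstacle is indeed the only real work, and the last-common-vertex decomposition you sketch is the usual way to handle it. Either route is adequate here, since the lemma is used only qualitatively (to ensure $p_c(d-1)<\beta$ for $d$ large in the proof of Proposition~\ref{prop_d>2_arbitrary_drift_large_d}).
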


Indeed, $p_c(d) = O\bigl(d^{-1}\bigr)$ holds. A proof of this result can e.g.~be found in \cite[Chapter~1, Theorem~7]{BR06}. Further, in the recurrence proofs we use the fact that an infinite open cluster is ``dense'' in $\Z^d$. The following weak version of denseness suffices.

\begin{lemma}\label{percolation_density}
Consider supercritical independent site percolation on $\Z^d$. There are constants $a,b>0$ such that 
\begin{equation*}
\P\bigl(\lvert A \cap C_x \rvert \geq a \lvert A \rvert \bigr) > b
\end{equation*}
for all $A \subseteq \Z^d$ and $x \in \Z^d$.
\end{lemma}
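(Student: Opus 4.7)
The idea is to compare the cluster $C_x$ with the (almost surely unique) infinite open cluster $C$, using a second-moment estimate combined with the FKG inequality. Since $p>p_c$, the quantity $\theta(p):=\P(0\in C)>0$, and by translation invariance $\P(y\in C)=\theta(p)$ for every $y\in\Z^d$. Crucially, the event $\{y\in C\}$ is increasing in the percolation configuration (declaring additional sites open can only merge finite clusters into the infinite one), and consequently $|A\cap C|$ is an increasing random variable.

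I would then apply the Paley--Zygmund inequality to $|A\cap C|$ (assuming $|A|<\infty$, as is the case in the applications of the lemma). Its expectation is $\theta(p)|A|$, while
\[
\E\bigl[|A\cap C|^2\bigr]=\sum_{y,z\in A}\P(y,z\in C)\leq |A|^2,
\]
so Paley--Zygmund with parameter $\tfrac12$ yields
\[
\P\bigl(|A\cap C|\geq \tfrac{\theta(p)}{2}|A|\bigr)\geq \tfrac14\cdot\frac{(\theta(p)|A|)^2}{|A|^2}=\tfrac{\theta(p)^2}{4}.
\]

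To transfer this estimate from $C$ to $C_x$, I would invoke FKG. Both $\{x\in C\}$ and $\bigl\{|A\cap C|\geq \tfrac{\theta(p)}{2}|A|\bigr\}$ are increasing events, hence their joint probability is at least $\theta(p)\cdot\tfrac14\theta(p)^2=\tfrac14\theta(p)^3$. On this intersection, $x\in C$ forces $C_x=C$ by uniqueness of the infinite cluster, and therefore $|A\cap C_x|\geq \tfrac{\theta(p)}{2}|A|$. Taking $a=\theta(p)/2$ and $b=\theta(p)^3/4$ then proves the lemma.

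The argument is short and rather routine; the only point requiring a little care is the monotonicity assertion used twice above, i.e.\ that the (random) infinite cluster $C$ grows monotonically when further sites are opened, so that FKG applies. Sharper constants are certainly possible (by optimising the Paley--Zygmund parameter, or, when $A$ is a large box, by invoking the ergodic theorem to obtain that $|A\cap C|/|A|\to\theta(p)$ almost surely on $\{x\in C\}$), but the weak version stated in the lemma is all that is needed in the recurrence proofs.
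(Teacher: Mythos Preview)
Your proof is correct and follows the same two-step structure as the paper: use FKG and uniqueness of the infinite cluster to replace $C_x$ by $C$, then give a moment bound for $|A\cap C|$. The only (minor) difference is in the second step: you apply Paley--Zygmund to $|A\cap C|$, whereas the paper applies Markov's inequality to the complementary count $|A\cap C^c|$, obtaining $\P(|A\cap C|\geq a|A|)\geq 1-\tfrac{1-\theta(p)}{1-a}$ from first moments alone; the two arguments are interchangeable and yield the same conclusion.
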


\begin{proof}
 
For $a>0$, $A \subseteq \Z^d$ and $x \in \Z^d$ the FKG-inequality yields
\begin{align*}
\P\bigl(\lvert A \cap C_x \rvert \geq a \lvert A \rvert \bigr)
& \geq \P\bigl( x \in C, \ \lvert A \cap C \rvert \geq a \lvert A \rvert \bigr)\\
& \geq \P( x \in C) \cdot \P\bigl(\lvert A \cap C \rvert \geq a \lvert A \rvert \bigr).
\end{align*}
Note that $\gamma:=\P( x \in C) \in (0,1)$ (and $\gamma$ does not depend on $x$) since the percolation is supercritical. By the Markov inequality
\begin{align*}
\P\bigl(\lvert A \cap C \rvert \geq a \lvert A \rvert \bigr)
& = 1 - \P\bigl(\lvert A \cap C^c \rvert \geq (1-a) \lvert A \rvert \bigr)\\
& \geq 1- \frac{\E \bigl[\lvert A \cap C^c \rvert \bigr] }{(1-a) \lvert A \rvert}\\
& = 1- \frac{1}{(1-a) \lvert A \rvert} \sum_{y \in A} \P(y \in C^c) \\
& = 1- \frac{1-\gamma}{1-a}>0,
\end{align*}
for $a$ small enough, which finishes the proof.
\end{proof}


\subsection*{Some results about frogs}

As mentioned in the introduction, the frog model presented in this paper satisfies a zero-one law, which is shown in \cite[Theorem~1]{KZ17} in a more general set-up. See also Appendix~A in \cite{KZ17} for a comment on the slightly different definition of recurrence used there.

\begin{thm}[\cite{KZ17}]\label{lemma_zero_one_law}
For any $d \geq 1$ and any nearest neighbour transition function $\pi$,
we have for $\fm(d,\pi)$ that the probability that the origin is visited infinitely many times by active frogs is either $0$ or $1$.
\end{thm}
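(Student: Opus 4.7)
The plan is to establish the $0$-$1$ dichotomy via a restart argument exploiting the strong Markov property at successive returns to the origin. Let $V$ denote the total number of visits to the origin by activated frogs and set $p := \P(V = \infty)$. Assuming $p > 0$, the aim is to deduce $p = 1$.

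First I would define the successive return times $T_1 < T_2 < \cdots$ at which activated frogs distinct from the initial frog visit the origin (with $T_k := \infty$ if no such time exists), and let $F_k$ be the identity of the frog effecting the $k$-th return on $\{T_k < \infty\}$. Writing $\mathcal{F}_{T_k}$ for the history of all trajectories up to time $T_k$, the strong Markov property applied to $(S^{F_k}_n)_{n \geq 0}$ at time $T_k$ ensures that the post-$T_k$ increments of $S^{F_k}$ form an independent random walk started at $0$. Moreover, for every frog $y$ not yet activated by time $T_k$, the entire trajectory $S^y$ is independent of $\mathcal{F}_{T_k}$. Hence, conditionally on $\mathcal{F}_{T_k}$ and $\{T_k < \infty\}$, the future dynamics form a fresh frog model launched from the origin, restricted however to the still-sleeping frogs.

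The dichotomy then follows from showing that the conditional probability of a further return, given $\mathcal{F}_{T_k}$ and $\{T_k<\infty\}$, stochastically dominates (or eventually agrees with) $p$, which after iteration forces $\P(V\ge k)$ to be either constantly zero or to converge to $p$ in a way that ties it to the tail event $\{V=\infty\}$. The main obstacle is precisely this comparison: the sleeping configuration at time $T_k$ is a complicated, history-dependent subset of $\Z^d$, so one cannot directly identify the restarted process with $\fm(d,\pi_{w,\alpha})$. Kosygina and Zerner circumvent this by recasting the sequence of returns as a transitive Markov chain on an auxiliary state space (essentially the environment seen from the activated cluster) and invoking a general $0$-$1$ law for such chains. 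An alternative route is to couple the depleted post-$T_k$ model with a translated fresh copy of $\fm(d,\pi_{w,\alpha})$, exploiting the translation invariance of the i.i.d.\ trajectory field together with the almost sure finiteness of the activated cluster at each finite time, so that the tail event $\{V=\infty\}$ is transferred to a translation-invariant event whose probability is $0$ or $1$ by spatial ergodicity. Either approach yields $p \in \{0,1\}$.
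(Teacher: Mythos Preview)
The paper does not give its own proof of this theorem: it is quoted from Kosygina--Zerner \cite{KZ17} and used as a black box. So there is nothing to compare against on the paper's side.

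As for your proposal itself, it is a reasonable outline of why one expects a $0$--$1$ law, and you correctly isolate the genuine difficulty: after the $k$-th return the configuration of sleeping frogs is depleted in a history-dependent way, so the restarted process is \emph{not} a copy of $\fm(d,\pi)$ and one cannot simply iterate the bound $\P(T_{k+1}<\infty \mid T_k<\infty)\ge p$. However, neither of your two suggested resolutions is carried out. Route (a) is just a citation of \cite{KZ17}, which is exactly what the paper does; invoking it is fine but then there is no independent argument. Route (b), the ergodicity sketch, has a real gap: the event $\{V=\infty\}$ is anchored at the origin (that is where the unique initially active frog sits), so it is not translation-invariant as a function of the i.i.d.\ trajectory field $(S^x)_{x\in\Z^d}$. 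You assert that it can be ``transferred to a translation-invariant event'' via a coupling with a translated fresh copy, but you do not explain which event, nor why the coupling succeeds; finiteness of the activated cluster at each finite time does not by itself produce such a transfer. Without that step the ergodicity argument does not close.

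In short: your write-up is a correct diagnosis of the problem and a pointer to the literature, but not a self-contained proof. If you want an argument that does not simply cite \cite{KZ17}, you would need to make precise either the auxiliary Markov chain on environments (and verify transitivity and the relevant $0$--$1$ law for it) or the claimed translation-invariant reformulation.
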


Due to this zero-one law, to show recurrence, we only need to prove that the origin is visited infinitely often with positive probability. 

In the symmetric frog model the set of vertices visited by active frogs, rescaled by time, converges to a convex set. This shape theorem is proven by Alves et al.~in \cite[Theorem 1.1]{AMP02} and we use it in one of the proofs concerning recurrence.

\begin{thm}[\cite{AMP02}]\label{lemma_shape_theorem}
Consider $\fm(d,\pi_{\text{sym}})$ and let $\xi_n$ be the set of all sites visited by active frogs by time~$n$ and $\overline{\xi}_n := \{x + (-\frac12, \frac12]^d \colon x \in \xi_n\}$. Then there is a non-empty convex symmetric set $\mathcal{A}=\mathcal{A}(d) \subseteq \R^d$, $\mathcal{A} \neq \{0\}$, such that, for any $0 < \varepsilon < 1$
 \begin{equation*}
  (1- \varepsilon) \mathcal{A} \subseteq \frac{\overline{\xi}_n}{n} \subseteq (1+ \varepsilon) \mathcal{A}
 \end{equation*}
for all $n$ large enough almost surely. 
\end{thm}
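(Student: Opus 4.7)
The plan is to proceed via a subadditive ergodic argument, in the spirit of the Cox--Durrett shape theorem for first-passage percolation and the Richardson growth model. For each $x \in \Z^d$ let $t(x)$ denote the first time an active frog visits $x$, with $t(0)=0$. My goal is to show that $t(nx)/n$ converges almost surely to a deterministic function $\mu(x)$ which is a seminorm, and then identify $\mathcal{A}$ with its unit ball.

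First I would establish a (nearly) subadditive inequality. The naive bound $t(x+y)\leq t(x)+\tilde t(y)$ is delicate because the process at time $t(x)$ already has many active frogs. The workaround is to introduce an auxiliary passage time $s(x,y)$ defined as the hitting time of $x+y$ in a restarted frog model where, at time $t(x)$, one deactivates everything except the frog at $x$ and then runs the process using the independent walks $(S_n^z)_{z\in\Z^d}$ (the set of trajectories fixed at the outset). By the obvious monotonicity of the frog model (adding active particles or activating them earlier can only decrease hitting times), $t(x+y)\leq t(x)+s(x,y)$, and by construction $s(x,y)$ has the same law as $t(y)$ and is independent of $\mathcal{F}_{t(x)}$.

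Next I would apply Kingman's subadditive ergodic theorem along each ray $\{nx:n\in\N\}$. For this one needs $\E[t(x)]<\infty$, which I would obtain from a crude comparison: the cluster of activated sites dominates the range of a single simple random walk started at $0$, and moreover grows with positive speed because any activated frog at distance $k$ eventually wakes $\Omega(k)$ further frogs along its trajectory. Standard large deviation bounds for SRW then yield $\E[t(x)]\leq C\|x\|_1$. Kingman then gives $\mu(x) := \lim_n n^{-1}t(nx)$ almost surely. Homogeneity and convexity extend $\mu$ to $\R^d$; symmetry under $e_i\mapsto -e_i$ follows from the symmetry of $\pi_{\text{sym}}$. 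Non-triviality $\mu(x)<\infty$ for $x\neq 0$ requires the positive-speed fact that the cluster of activated sites grows at least linearly, which is the crucial input distinguishing the frog model from a single random walk; this can be obtained by iterating the one-step wake-up of $\Omega(1)$ new frogs and invoking a block argument.

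The final step is to upgrade pointwise convergence of $t(nx)/n$ to the uniform statement $(1-\varepsilon)\mathcal{A}\subseteq n^{-1}\bar\xi_n\subseteq(1+\varepsilon)\mathcal{A}$ with $\mathcal{A} := \{x : \mu(x)\leq 1\}$. The outer inclusion is the easier direction: a union bound over a finite net on $\partial\mathcal{A}$ combined with concentration of $t(x)$ around $\mu(x)$, plus monotonicity to sweep between net points, suffices. I expect the hard part to be the inner inclusion, which demands that \emph{every} lattice point in $(1-\varepsilon)n\mathcal{A}$ has been visited by time $n$; this requires a uniform-in-$x$ upper tail bound $\P(t(x)\geq (1+\varepsilon)\mu(x))\leq e^{-c\|x\|}$ so that a union bound over the $\Theta(n^d)$ sites, followed by Borel--Cantelli, gives the almost sure statement. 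Establishing such exponential concentration for $t(x)$ is the main technical obstacle and typically relies on a renormalisation/block argument tailored to the frog dynamics, since $t(x)$ is not a sum of i.i.d.\ increments.
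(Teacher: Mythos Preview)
The paper does not prove this theorem; it is quoted verbatim from \cite{AMP02} and used as a black box. There is therefore no ``paper's own proof'' to compare your sketch against. Your outline is essentially the strategy of \cite{AMP02}: define passage times, exploit monotonicity to obtain a subadditive structure, apply Kingman's theorem along rays to produce a seminorm, and then upgrade to the full shape statement via large-deviation bounds on $t(x)$.

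That said, two points in your sketch deserve correction. First, your claim that $s(x,y)$ is \emph{independent} of $\mathcal{F}_{t(x)}$ is false: the restarted process reuses the same trajectories $(S^z_n)_{z\in\Z^d}$, and in particular the walks of frogs near $x$ may already have been partially revealed. What you actually get is translation-stationarity of the array, which is all Kingman requires; independence is neither true nor needed. Second, the integrability $\E[t(x)]<\infty$ and, more importantly, the exponential upper-tail bound needed for the inner inclusion are not as soft as you suggest. Comparison with a single random walk gives nothing (SRW has zero speed), and the ``iterate one-step wake-up'' heuristic is not a proof. In \cite{AMP02} these bounds are the technical heart of the paper and are obtained via a renormalisation comparing the frog model to supercritical oriented percolation; you have correctly identified this as the main obstacle but should not expect it to follow from elementary arguments.
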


\begin{remark} \label{remark_shape}
The proof of Theorem~\ref{lemma_shape_theorem} goes through for the ``lazy'' version of the frog model, where in each step a frog decides to stay where it is with probability $q \in (0,1)$, independently of all other frogs.
\end{remark}

Further, we need a result on the frog model with death. For $s \in [0,1]$ it is defined just as the usual frog model, but every active frog dies at every step with probability $1-s$ independently of everything else. The parameter $s$ is called the survival probability. We denote this frog model on $\Z^d$ by $\fm^*(d,\pi,s)$ if the underlying random walk has transition function $\pi$. Further, we denote frog clusters in the frog model with death by $\fc^*$, analogous to the notation introduced in \eqref{def_frog_cluster} for the frog model without death. In this paper we only use the frog model with death in the symmetric case, i.e. $\pi= \pi_{\text{sym}}$. We say that the frog model with death survives if at any time there is at least one active frog.
The frog model with death is intensively studied in \cite{AMP02pt} and also in \cite{FMS04} and \cite{LMP05}. We need the following lemma in the proofs concerning transience. 

\begin{lemma}\label{lemma_1d_fm}
 For $\fm(1,\pi_{1,\alpha})$ with $\alpha > 0$ and $\fm^*(1,\pi_{sym},s)$ with $s < 1$ there is $c>0$ such that $\P(0 \fp{\Z} -n) \leq \e^{-cn}$ for all $n \in \N$. 
\end{lemma}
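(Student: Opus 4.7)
The plan is a union bound over ``frog chains'' combined with exponential tail estimates for single-frog visit probabilities. By Lemma \ref{lemma_hitting_probability_hyperplane}, for $\fm(1,\pi_{1,\alpha})$ one has $\P(y\to y-k)=\beta^k$ with $\beta=(1-\alpha)/(1+\alpha)\in(0,1)$; and for $\fm^*(1,\pi_{\text{sym}},s)$, a standard gambler's-ruin recursion gives $\P(y\to y\pm k)=h^k$ with $h=(1-\sqrt{1-s^2})/s\in(0,1)$. The event $\{0 \fp{\Z} -n\}$ equals the union over sequences of pairwise distinct frogs $C=(0=z_0,z_1,\ldots,z_m=-n)$ of $\{z_i\to z_{i+1}\ \text{for all }i\}$, and by independence of different frogs' trajectories,
\[
\P(0 \fp{\Z} -n) \le \sum_{C} \prod_{i=0}^{m-1} \P(z_i\to z_{i+1}).
\]

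For the drift case I would drop the distinct-frog constraint (this only enlarges the bound) and evaluate the resulting sum via generating functions. Encoding each step of signed size $a$ by $\beta^{(-a)_+}\,x^a$, the step generating function in the variable $y := 1/x$ is $\tilde S_\beta(y) := \beta y/(1-\beta y) + 1/(y-1)$ and the chain sum equals $[y^n]\,\tilde S_\beta/(1-\tilde S_\beta)$. A short computation identifies the denominator $1-\tilde S_\beta$ as a constant multiple of $-2\beta y^2+(1+3\beta)y-2$, whose roots are either a complex conjugate pair of modulus $1/\sqrt{\beta}>1$ (when $\beta>1/9$) or two positive reals both strictly greater than $1$ (when $\beta\le 1/9$). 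In either case the radius of convergence of $\tilde S_\beta/(1-\tilde S_\beta)$ exceeds $1$, so $[y^n]$ decays exponentially in $n$ for every $\beta\in(0,1)$, yielding the claim.

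For the killed symmetric case the analogous sum over all chains diverges, so one must exploit the self-avoidance. I would instead use the interval structure: in 1D nearest-neighbour the range of each frog is an interval, hence the frog cluster is an interval $[L,R]\ni 0$ and $\{0\fp{\Z} -n\}=\{L\le -n\}$. Reveal trajectories in stages (first frog $0$, then the trajectories of frogs newly activated at each previous stage). By independence of distinct frogs' trajectories, the conditional probability that the cluster extends at least one step to the left at a given stage is at most $1-\prod_{j\ge 1}(1-h^j)<1$, uniformly in the current cluster; and given an extension occurs, its size has uniformly geometric tail $\P(\cdot\ge j)\le C h^j$ (by a one-line union bound over the newly revealed frogs). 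A moment-generating-function / Chernoff argument on the sum of leftward extension sizes over the (geometrically tailed) number of extension stages then gives exponential decay of $|L|$.

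The main expected obstacle is the killed symmetric case: establishing the state-uniform tail bound on the extension size and combining it cleanly with the geometric bound on the number of extension stages to extract the desired exponential decay of $\P(L\le -n)$. In the drift case, by contrast, the generating-function approach is short and uniform in $\alpha\in(0,1]$, which is why I would prefer it despite the naive combinatorics overcounting paths.
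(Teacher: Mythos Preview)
Your drift-case argument has a genuine gap. Dropping the distinctness constraint makes the chain sum infinite whenever $\beta=(1-\alpha)/(1+\alpha)>1/4$: already the sub-sum over nearest-neighbour chains of length $m$ from $0$ to $-n$ equals $\binom{m}{(m+n)/2}\beta^{(m+n)/2}\sim(2\sqrt\beta)^m\beta^{n/2}/\sqrt m$, which diverges in $m$. The rational function $\tilde S_\beta/(1-\tilde S_\beta)$ is of course well-defined and has its poles at $|y|>1$, but its Taylor coefficients at $y=0$ are \emph{not} the chain sum. The step weights live in the Laurent expansion of $\tilde S_\beta$ on the annulus $1<|y|<1/\beta$ (the Taylor coefficients of $\tilde S_\beta$ at $0$ are $\beta^k-1<0$, not the nonnegative weights you need), and on that annulus $\tilde S_\beta$ attains its minimum $2\sqrt\beta/(1-\sqrt\beta)$ at $y=1/\sqrt\beta$, which is $\ge 1$ for every $\beta\ge 1/9$. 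Hence $\sum_m\tilde S_\beta^m$ fails to converge anywhere in the annulus and the identification of the chain sum with a coefficient of $\tilde S_\beta/(1-\tilde S_\beta)$ is invalid. Your generating-function route only works for $\alpha$ large.

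The paper's proof treats both models at once by a Markov-chain argument that sidesteps the combinatorics. With $p=\beta$ (drift case) or $p=h$ (killed case), one notes that if $-n\in\fc_0$ and $Y_n$ is the number of frogs to the right of $-n$ whose trajectory hits $-n$, then conditionally on $Y_n=k>0$ one has $Y_{n+1}\sim\mathrm{Binomial}(k+1,p)$: each of those $k$ frogs, plus frog $-n$, hits $-n-1$ independently with probability $p$, and by the nearest-neighbour property no other frog can. Since this chain has negative drift once $Y_n$ is large, it is dominated by a biased walk absorbed at $0$, and an exponential-moment bound on the absorption time gives $\P(Y_n>0)\le\e^{-cn}$. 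This is essentially the ``extension'' picture you sketch for the killed case, but packaged so that all right-side growth is already absorbed into the single count $Y_n$; that bookkeeping is precisely what your stage-by-stage reveal is missing (your bound $1-\prod_{j\ge1}(1-h^j)$ on the left-extension probability at a given stage does not control the total number of stages, which can be driven up indefinitely by growth on the right).
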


\begin{proof}
 Let $p$ be the probability that a frog starting from $0$ ever hits the vertex $-1$. In both models we have $p <1$. Obviously, as $s <1$, this is true for $\fm^*(d,\pi_{\text{sym}},s)$. For $\fm(1,\pi_{1,\alpha})$ it follows from Lemma~\ref{lemma_hitting_probability_hyperplane}.

 For $n \in \N$ define $Y_n = \lvert\{m > -n \colon m \to -n \}\rvert$ if $-n \in \fc_0$, respectively $-n \in \fc_0^*$. Otherwise set $Y_n =0$. If $-n$ is visited by active frogs, then $Y_n$ counts the number of frogs to the right of $-n$ that potentially ever reach $-n$. The process $(Y_n)_{n \in \N}$ is a Markov chain on $\N_0$ with
 \begin{equation*}
  Y_{n+1} = 
  \begin{cases}
   0 & \text{if $Y_n = 0$,} \\
   \operatorname{Binomial}(Y_n+1,p) & \text{if $Y_n > 0$}.
  \end{cases}
 \end{equation*}
 Note that $\P(0 \fp{\Z} -n) = \P(Y_n >0)$ by definition.
 A straightforward calculation shows that there is $k_0 \in \N$ such that $\P(Y_{n+1} < Y_n \mid Y_n = k) > \frac23$ for all $k \geq k_0$. Hence, we can dominate the Markov chain $(Y_n)_{n \in \N}$ by the Markov chain $(\widetilde{Y}_n)_{n \in \N}$ on $\{0, k_0, k_0+1, \ldots\}$ with transition probabilities 
   \begin{align*}
   \P(\widetilde{Y}_{n+1} = l \mid \widetilde{Y}_n =k) = 
   \begin{cases}
    \frac{1}{3} & \text{if $l=k+1$, $k > k_0$}, \\
    \frac{2}{3} & \text{if $l=k-1$, $k > k_0$}, \\
    (1-p)^{k_0+1} & \text{if $l=0$, $k = k_0$}, \\
    1-(1-p)^{k_0+1} & \text{if $l=k+1$, $k = k_0$}, \\
    1 & \text{if $l=k=0$}
   \end{cases}
  \end{align*}
 for all $n \in \N$ and starting point $\widetilde{Y}_1 = \max\{Y_1, k_0\}$. Obviously, we have $\P(Y_n >0) \leq \P(\widetilde{Y}_n >0)$ for all $n \in \N$.
 Let $T_k= \min\{n \in \N \colon \widetilde{Y}_n = k\}$ and $T_{k,l}=T_l - T_k$. Note that $\P(\widetilde{Y}_n >0) = \P(T_0 >n)$. For $t >0$,
we apply the Markov inequality and use the strong Markov property to get
 \begin{align}\label{proof_lemma_1d_fm_1}
 \P(T_0 > n) &=    \P\biggl(\sum_{k=k_0}^{\widetilde{Y}_1-1} T_{k+1,k} + T_{k_0,0} > n\biggr) \nonumber\\
             &\leq \e^{-tn}\E\biggl[\exp\biggl(t \sum_{k=k_0}^{\widetilde{Y}_1-1} T_{k+1,k} + tT_{k_0,0}\biggr)\biggr] \nonumber\\
             &=    \e^{-tn} \sum_{l=k_0}^{\infty} \prod_{k=k_0}^{l-1} \E\bigl[\exp(tT_{k+1,k})\bigr] \E\bigl[\exp( tT_{k_0,0})\bigr]\P(\widetilde{Y}_1 = l) \nonumber\\
             &=    \e^{-tn} \sum_{l=0}^{\infty} \E\bigl[\exp(tT_{k_0+1,k_0})\bigr]^l \E\bigl[\exp( tT_{k_0,0})\bigr]\P(\widetilde{Y}_1 = l+k_0).
 \end{align}
$\widetilde{Y}_1$ can only be equal to $l+k_0$ if at least one frog to the right of $l-1$ reaches $-1$. Thus, 
\begin{equation}\label{proof_lemma_1d_fm_2}
 \P(\widetilde{Y}_1 = l+k_0) \leq \sum_{i=l}^{\infty} p^{i+1} = p^l \frac{p}{1-p}. 
\end{equation}
Now, we choose $t>0$ small enough such that $\E\bigl[\exp(tT_{k_0+1,k_0})\bigr] < p^{-1}$. Then \eqref{proof_lemma_1d_fm_2} shows that the sum in \eqref{proof_lemma_1d_fm_1} is finite, which yields the claim.
\end{proof}


\subsection*{A lemma on Bernoulli random variables}

We will repeatedly use the following simple lemma. Note that the random variables in this lemma do not have to be independent.

\begin{lemma}\label{lemma_sum_rv}
For $i \in \N$ let $X_i$ be a Bernoulli($p_i$) random variable with $\inf_{i\in \N}p_i =:p >0$. Then for every $a >0$ and $n \in \N$ 
\begin{equation*}
\P \left(\frac{1}{n} \sum_{i=1}^n X_i \geq a \right) \geq p-a. 
\end{equation*}
\end{lemma}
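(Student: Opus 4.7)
The plan is to reduce the inequality to a short first-moment estimate, using only that the partial averages lie in $[0,1]$ and have expectation at least $p$; independence plays no role, which matches the statement of the lemma.

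First I would set $Y_n = \frac{1}{n}\sum_{i=1}^n X_i$ and observe that, by linearity of expectation and the hypothesis $\inf_i p_i \geq p$, one has $\E[Y_n] \geq p$. Since $Y_n$ takes values in $[0,1]$, I would then split the expectation according to the event $\{Y_n \geq a\}$:
\begin{equation*}
\E[Y_n] = \E\bigl[Y_n \1_{\{Y_n \geq a\}}\bigr] + \E\bigl[Y_n \1_{\{Y_n < a\}}\bigr] \leq \P(Y_n \geq a) + a,
\end{equation*}
bounding the first summand by $1$ on the event $\{Y_n \geq a\}$ and the second by $a$ pointwise. Rearranging and using $\E[Y_n] \geq p$ gives $\P(Y_n \geq a) \geq p - a$, which is the claim.

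There is essentially no obstacle here: the statement is trivial when $a \geq p$ (the right-hand side is nonpositive and the inequality is automatic), and for $a < p$ the two-line argument above suffices. The only thing to notice is that no independence is required, which is precisely why this particular formulation is convenient for the later applications in the recurrence proofs.
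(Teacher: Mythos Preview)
Your proof is correct and is essentially identical to the paper's own argument: both bound $\E[Y_n]$ from below by $p$ and from above by $\P(Y_n\geq a)+a$ using that $Y_n\in[0,1]$. The paper's write-up is slightly terser, but the underlying first-moment split is the same, and both note that independence is not needed.
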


\begin{proof}
Since $\E[X_i]\geq p$ and $\frac{1}{n} \sum_{i=1}^n X_i \leq 1$, we have
\begin{align*}
p \leq \E\left[\frac{1}{n}\sum_{i=1}^n X_i \right] \leq \P \left(\frac{1}{n}\sum_{i=1}^n X_i \geq a \right) + a ,
\end{align*}
which yields the claim.
\end{proof}

\section{Proofs}\label{proofs}
In this section we prove the main results of the paper. To show recurrence we always compare the frog model with independent site percolation. To show transience we couple the frog model with branching random walks.

\subsection*{Recurrence for $d \geq 2$ and arbitrary weight}
In this section we prove Theorem \ref{thm_d=2_arbitrary_weight_i} and Theorem \ref{thm_d>2_arbitrary_weight}. Throughout this section assume that $w <1$ is fixed. 
To illustrate the basic idea of the proof we first sketch it for $d=2$. We call a site $x$ in $\Z^2$ open if the trajectory $(S_n^x)_{n \in \N_0}$ of frog $x$ includes the four neighbouring vertices $x \pm e_1, x \pm e_2$ of $x$, i.e.~if $x \to x \pm e_1$ and $x \to x \pm e_2$. Note that for this definition it does not matter whether frog $x$ is ever activated or not. All sites are open independently of each other due to the independence of the trajectories of the frogs. Furthermore, the probability of a site to be open is the same for all sites. Consider the percolation cluster $C_0$ that consists of all sites that can be reached from $0$ by open paths, i.e.~paths containing only open sites. Note that all frogs in $C_0$ are activated as frog $0$ is active in the beginning. In this sense the frog model dominates the percolation.
As we are in $d=2$, the probability of a site $x$ being open equals $1$ for $\alpha=0$ and by continuity is close to $1$ if $\alpha$ is close to $0$. Thus, if $\alpha$ is close enough to $0$ the percolation is supercritical. 
Hence, with positive probability the cluster $C_0$ containing the origin is infinite. By Lemma~\ref{percolation_density} this infinite cluster contains many sites close to the negative $e_1$-axis. This shows that many frogs that are initially close to this axis get activated. Each of them travels in the direction of the $e_1$-axis and has a decent chance of visiting $0$ on its way. Hence, this will happen infinitely many times. This argument shows that the origin is visited by infinitely many frogs with positive probability. Using the zero-one law stated in Theorem~\ref{lemma_zero_one_law} yields the claim.

In higher dimensions the probability of a frog to visit all its neighbours is not close to $1$ however small the drift may be. We can still make the reasoning work by using a renormalization type argument. 
To make this argument precise let $K$ be a non-negative integer that will be chosen later. We tessellate $\Z^d$ for $d \geq 2$ with cubes $(Q_x)_{x \in \Z^d}$ of size $(2K+1)^d$. For every $x \in \Z^d$ we define
\begin{align}\label{def_box}
\begin{split}
 q_x &= q_x(K) = (2K+1)x,\\
 Q_x &= Q_x(K) = \{y \in \Z^d \colon \lVert y-q_x \rVert_{\infty} \leq K\}, 
\end{split}
\end{align}
where $\lVert x \rVert_{\infty} = \max_{1 \leq i \leq d}{\lvert x_i \rvert}$ is the supremum norm. 
We call a site $x \in \Z^d$ open if for every $e \in \mathcal{E}_d$ there exists a frog path from $q_x$ to $q_{x + e}$ in $Q_x$. Otherwise, $x$ is said to be closed. 
The probability of a site $x$ to be open does not depend on $x$, but only on the drift parameter $\alpha$ and the cube size $K$. We denote it by $p(K, \alpha)$. This defines an independent site percolation on $\Z^d$, which, as mentioned before, is dominated by the frog model in the following sense: For any $x \in C_0$ the frog at $q_x$ will be activated in the frog model, i.e.~$q_x \in \fc_0$ with $\fc_0$ as defined in \eqref{def_frog_cluster}. 

In the next two lemmas we show that the probability $p(K,\alpha)$ of a site to be open is close to $1$ if the drift parameter $\alpha$ is small and the cube size $K$ is large. We first show this claim for the symmetric case $\alpha=0$.

\begin{lemma} \label{lemma_recurrence_cube_size}
For every $w<1$ in the frog model $\fm(d, \pi_{w,0})$ we have
 \begin{equation*}
  \lim_{K \to \infty} p(K, 0) =1.
 \end{equation*}
\end{lemma}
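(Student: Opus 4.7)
The plan is to combine a shape–theorem bound on the number of activated frogs in $Q_x$ with the hitting–probability lower bound of Lemma~\ref{lemma_hitting_probability_SRW}, via a conditional–independence argument on the post–$n$ continuations of the trajectories.

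First, I would observe that for $\alpha=0$ the walk $\pi_{w,0}$ is symmetric, irreducible, and has bounded increments; the argument of Alves, Machado and Popov behind Theorem~\ref{lemma_shape_theorem} only uses those ingredients (compare Remark~\ref{remark_shape}), so it extends to $\fm(d,\pi_{w,0})$ and produces a convex symmetric set $\mathcal{A}=\mathcal{A}(d,w)\subseteq\R^d$ with $0\in\operatorname{int}\mathcal{A}$, and in particular a constant $c_1=c_1(d,w)>0$ with $\P(|\xi_n|\geq c_1 n^d)\to 1$ as $n\to\infty$. For $d=2$ the walk $\pi_{w,0}$ is recurrent, so frog $q_x$ alone visits every neighbour $q_{x+e}$ almost surely and $p(K,0)=1$ for every $K$; the rest of the plan treats $d\geq 3$.

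Next, I would fix $n=\lfloor K/2\rfloor$ and exploit the deterministic inclusion $\xi_n\subseteq B_\infty(q_x,n)\subseteq Q_x$ (walkers take unit $\ell_\infty$–steps). Its crucial consequence is that for every $y\in\xi_n$ the entire activation chain in the unrestricted frog model that ends at $y$ has all its intermediate starting positions in $\xi_{\tau_z}\subseteq\xi_n\subseteq Q_x$; hence $y\in\xi_n$ together with $y\to q_{x+e}$ already implies $q_x\fp{Q_x}q_{x+e}$.

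Now fix $e\in\mathcal{E}_d$ and condition on the filtration $\mathcal{F}_n$ of the frog model up to time $n$. For each $y\in\xi_n$ its position at time $n$, namely $y^*=S^y_{n-\tau_y}$, lies in $B_\infty(q_x,2n)\subseteq B_\infty(q_x,K)$, so $\|y^*-q_{x+e}\|_2\leq c_3 K$ and Lemma~\ref{lemma_hitting_probability_SRW} yields
\begin{equation*}
 \P\bigl(\text{the walk from }y^*\text{ hits }q_{x+e}\,\bigm|\,\mathcal{F}_n\bigr)\geq c_2 K^{-(d-2)}.
\end{equation*}
By the strong Markov property and the independence of the trajectories $(S^y)_{y\in\Z^d}$, the post–$n$ continuations of $(S^y)_{y\in\xi_n}$ are conditionally independent given $\mathcal{F}_n$; therefore on $\{|\xi_n|\geq c_1 n^d\}$
\begin{equation*}
 \P\bigl(y\not\to q_{x+e}\text{ for all }y\in\xi_n\,\bigm|\,\mathcal{F}_n\bigr)\leq \bigl(1-c_2 K^{-(d-2)}\bigr)^{c_1 n^d}\leq \exp(-c_4 K^2).
\end{equation*}
Integrating and combining with the shape–theorem event shows $\P(\exists y\in\xi_n:\ y\to q_{x+e})\to 1$, which by the previous paragraph forces $\P(q_x\fp{Q_x}q_{x+e})\to 1$; a union bound over the $2d$ directions in $\mathcal{E}_d$ finishes the proof.

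The only non–routine point is the first one: extending the shape theorem to $\pi_{w,0}$, which is necessary to secure $|\xi_n|\gg K^{d-2}$. Once that is in place the remainder is a clean consequence of the deterministic confinement $\xi_n\subseteq Q_x$, the polynomial hitting bound of Lemma~\ref{lemma_hitting_probability_SRW}, and the independence of distinct frog trajectories.
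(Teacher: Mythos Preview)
Your overall strategy coincides with the paper's: produce many activated frogs confined to $Q_x$ via a shape theorem, then use Lemma~\ref{lemma_hitting_probability_SRW} and an independence argument to show at least one of them reaches each $q_{x+e}$. Your time-$n$ filtration device for the conditional independence is a clean variant of the paper's ``independent once it will never return to $Q_0$'' justification.

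The one substantive divergence is exactly the point you flag as non-routine. You assert that the AMP shape theorem extends directly to $\pi_{w,0}$; the paper explicitly declines to make this claim (``This theorem assumes equal weights on all directions. As in our model the $e_1$-direction has a different weight, we need a workaround.'') and instead projects onto $\Z^{d-1}$: ignoring the $e_1$-coordinate turns each $\pi_{w,0}$-frog into a lazy $(d-1)$-dimensional SRW frog (it stays put with probability $w$), to which Remark~\ref{remark_shape} applies verbatim. This yields only $|\xi_K|\geq c_1K^{d-1}$ rather than your $|\xi_n|\geq c_1 n^d$, hence a decay $\e^{-cK}$ instead of your $\e^{-cK^2}$, but either suffices. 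Your route is sharper and slightly more direct; the paper's detour is less sharp but completely self-contained, since it only invokes the shape theorem in the precise form already stated. If you want your version to stand on its own you would need to actually check that the AMP proof goes through for anisotropic symmetric nearest-neighbour walks, which Remark~\ref{remark_shape} does not cover.
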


\begin{proof}
For $d=2$ we obviously have $p(K, 0) = 1$ for all $K \in \N_0$ as balanced nearest random walk on $\Z^2$ is recurrent. Therefore, we can assume $d \geq 3$. The proof of the lemma relies on the shape theorem (Theorem~\ref{lemma_shape_theorem}) for the frog model. This theorem assumes equal weights on all directions. As in our model the $e_1$-direction has a different weight, we need a workaround. We couple our model with a modified frog model on $\Z^{d-1}$ in which the frogs in every step stay where they are with probability $w$ and move according to a simple random walk otherwise. A direct coupling shows that, up to any fixed time, in the modified frog model on $\Z^{d-1}$ there are at most as many frogs activated as in the frog model $\fm(d,\pi_{w,0})$. Note that Theorem~\ref{lemma_shape_theorem} holds true for the modified frog model on $\Z^{d-1}$, see Remark~\ref{remark_shape}. Let $\xi_K$, respectively $\xi_K^{\text{mod}}$, be the set of all sites visited by active frogs by time~$K$ in the frog model $\fm(d,\pi_{w,0})$, respectively the modified frog model on $\Z^{d-1}$. Further, let $\overline{\xi_K^{\text{mod}}} := \{x + (-\frac12, \frac12]^{d-1} \colon x \in \xi_K^{\text{mod}}\}$. By Theorem~\ref{lemma_shape_theorem} there exists a non-trivial convex symmetric set $\mathcal{A}=\mathcal{A}(d) \subseteq \R^{d-1}$ and an almost surely finite random variable~$\mathcal{K}$ such that 
 \begin{equation*}
   \mathcal A  \subseteq \frac{\overline{\xi_K^{\text{mod}}}}{K}
 \end{equation*}
for all $K \geq \mathcal{K}$. This implies that there is a constant $c_1 = c_1(d) > 0$ such that $\lvert \xi_K^{\text{mod}}\rvert \geq c_1 K^{d-1}$ for all $K \geq \mathcal{K}$. By the coupling the same statement holds true for $\xi_K$.
As $\xi_K \subseteq Q_0(K)$ and any vertex in $\xi_K$ can be reached by a frog path from $0$ in $Q_0$, this implies
\begin{equation*}
 \Bigl\lvert\Bigl\{y \in Q_0\colon 0 \fp{Q_0} y\Bigr\} \Bigr\rvert \geq \lvert \xi_K\rvert \geq c_1 K^{d-1}
\end{equation*}
for all $K \geq \mathcal{K}$. Thus we have at least $c_1 K^{d-1}$ vertices in the box $Q_0$ that can be reached by frog paths from $0$. Each frog in $Q_0$ has a chance to reach the centre $q_{e}$ of a neighbouring box. More precisely, by Lemma~\ref{lemma_hitting_probability_SRW} there is a constant $c_2 =c_2(d)>0$ such that
\begin{equation} \label{proof_lemma_recurrence_cube_size_0}
 \P \bigl( y \to q_{e} \bigr) \geq \frac{c_2}{K^{d-2}}
\end{equation} 
for any vertex $y \in Q_0$ and $e \in \mathcal{E}_d$.
Hence, for any $e \in \mathcal{E}_d$
\begin{align} \label{proof_lemma_recurrence_cube_size_1}
 \P\bigl( (0 \fp{Q_0} q_{e})^c  \mid K \geq \mathcal{K} \bigr) 
	&= \P \Bigl( \bigl\{y \not\to q_{e} \text{ for all } y \in Q_0 \text{ with } 0 \fp{Q_0} y\bigr\}  \bigm\vert K \geq \mathcal{K} \Bigr) \nonumber\\
        &\leq \Bigl(1-\frac{c_2}{K^{d-2}}\Bigr)^{c_1K^{d-1}} \nonumber\\
        & \leq \e^{-c_1c_2K},
\end{align}
where we used for the first inequality the fact that a frog moves independently of all frogs in $Q_0$ once it will never return to $Q_0$ and the uniformity of the bound in \eqref{proof_lemma_recurrence_cube_size_0}. Therefore,
\begin{align} \label{proof_lemma_recurrence_cube_size_2}
 p(K,0) &\geq  \P\Bigl(\bigcap_{e \in \mathcal{E}_d} \{0 \fp{Q_0} q_{e} \} \Bigm\vert K \geq \mathcal{K} \Bigl) \P_{0}(K \geq \mathcal{K}) \nonumber\\
        &\geq  \biggl[ 1- 2d \operatorname{e}^{-c_1 c_2 K} \biggr] \P(K \geq \mathcal{K}).
\end{align}
Since $\mathcal K$ is almost surely finite, we have $\lim_{K \to \infty}\P_{0}(K \geq \mathcal{K}) =1$. Thus, the right hand side of \eqref{proof_lemma_recurrence_cube_size_2} tends to $1$ in the limit $K\to \infty$.
\end{proof}

\begin{lemma}\label{lemma_recurrence_small_drift}
For fixed $w <1$, in the frog model $\fm(d,\pi_{w,\alpha})$ we have for all $K \in \N_0$
 \begin{equation*}
  \liminf_{\alpha \to 0} p(K, \alpha) \geq p(K,0).
 \end{equation*}
\end{lemma}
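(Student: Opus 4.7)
The plan is to show that $\alpha \mapsto p(K,\alpha)$ is lower semicontinuous at $\alpha = 0$. Write $B$ for the event that the origin is open under $\fm(d,\pi_{w,\alpha})$ and $\P_\alpha$ for the corresponding law, so $p(K,\alpha) = \P_\alpha(B)$. The subtlety is that $B$ depends on the full (infinite) trajectories of the $(2K+1)^d$ frogs in $Q_0$ and so is not a cylinder event in the underlying product space; continuity in $\alpha$ is therefore not automatic. The remedy is to approximate $B$ from below by cylinder events obtained by truncating the trajectories to a finite time horizon.

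First, for each $T \in \N$ and $e \in \mathcal{E}_d$ I would let $A_T(e)$ be the event that there exist $n \in \N$ and $z_1,\dots,z_n \in Q_0$ such that the trajectory of frog $z_i$ visits $z_{i+1}$ within its first $T$ steps, where $z_0 := 0$ and $z_{n+1} := q_e$. Setting $B_T := \bigcap_{e \in \mathcal{E}_d} A_T(e)$, and observing that no generality is lost by restricting to $n \leq \lvert Q_0 \rvert$, the event $B_T$ is a finite Boolean combination of events determined by the first $T$ steps of the $(2K+1)^d$ independent random walks started inside $Q_0$. Since each transition probability in \eqref{transition_function} is affine in $\alpha$, $\P_\alpha(B_T)$ is a polynomial in $\alpha$, hence continuous at $\alpha = 0$.

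Next I would check that $B_T \nearrow B$ as $T \to \infty$. The inclusions $B_T \subseteq B_{T+1} \subseteq B$ are immediate from the definitions. Conversely, on $B$ each of the $2d$ witnessing frog paths uses only finitely many steps of finitely many trajectories, so some finite (random) $T$ suffices to realise all of them simultaneously, giving $B \subseteq \bigcup_T B_T$. Continuity of measure under $\P_0$ then yields $\P_0(B_T) \uparrow p(K,0)$.

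Combining the two steps, for every fixed $T$,
\[
\liminf_{\alpha \to 0} p(K,\alpha) \;\geq\; \liminf_{\alpha \to 0} \P_\alpha(B_T) \;=\; \P_0(B_T),
\]
and letting $T \to \infty$ gives the claim. I do not anticipate a serious obstacle; the argument is essentially just careful bookkeeping to ensure that $B_T$ is a bona fide cylinder event on the first $T$ steps of finitely many trajectories, so that polynomial continuity in $\alpha$ applies and the increasing limit is genuinely $B$.
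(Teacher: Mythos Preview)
Your argument is correct and is essentially the paper's own proof, repackaged: the paper writes $p(K,\alpha)$ as a countable sum of terms that are polynomial in $\alpha$ (indexed by step counts until all neighbouring centres are first reached) and applies Fatou's Lemma, whereas you approximate $B$ from below by the time-$T$ truncations $B_T$ and pass to the limit. Both exploit the same two facts---that cylinder events depending on finitely many steps have probabilities continuous in $\alpha$, and that $B$ is an increasing limit of such events---so the difference is purely organisational.
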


\begin{proof}
Let $L(a,b,c,K)$ be the number of possible realizations such that all $q_{x \pm e}$, $e \in \mathcal{E}_d$, are visited by frogs in $Q_0$ for the first time after in total (of all frogs) exactly $a$ steps in $e_1$-direction, $b$ steps in $-e_1$-direction and $c$ steps in all other directions. Note that $L(a,b,c,K)$ is independent of $\alpha$. We have
\begin{equation*}
p(K, \alpha) =   \sum_{a,b,c=1}^\infty L(a,b,c,K) \biggl(\frac{w(1+\alpha)}{2}\biggr)^a \biggl(\frac{w(1-\alpha)}{2}\biggr)^b \biggl(\frac{1-w}{2(d-1)}\biggr)^c.
\end{equation*} 
The claim now follows from Fatou's Lemma.
\end{proof}

\begin{proof}[Proof of Theorem \ref{thm_d=2_arbitrary_weight_i} and Theorem \ref{thm_d>2_arbitrary_weight}]
By Lemma \ref{lemma_recurrence_cube_size} and Lemma \ref{lemma_recurrence_small_drift} we can assume that $K$ is big enough and $\alpha >0$ small enough such that $p(K, \alpha)> p_c(d)$, i.e.~the percolation with parameter $p(K, \alpha)$ on $\Z^d$ constructed at the beginning of this section is supercritical. 

Consider boxes $B_n = \{-n\} \times [-\sqrt{n},\sqrt{n}]^{d-1}$ for $n \in \N$. By Lemma~\ref{percolation_density} there are constants $a,b > 0$ and $N \in \N$ such that for all $n \geq N$
\begin{equation*}
 \P(\lvert B_n \cap C_0\rvert \geq a n^{(d-1)/2})>b.
\end{equation*}
After rescaling, the boxes $B_n$ correspond to the boxes
\begin{equation*}
\fb_n = \{y \in \Z^d \colon \lvert y_1 + (2K+1)n \rvert \leq K,\, \lvert y_i \rvert \leq (2K+1)\sqrt{n} +K,\, 2 \leq i \leq d\}.
\end{equation*}
Recall that $\fc_0$ consists of all vertices reachable by frog paths from $0$ as defined in \eqref{def_frog_cluster}, and note that $x \in B_n \cap C_0$ implies $q_x \in  \fb_n \cap \fc_0$. 
This shows
\begin{equation}\label{proof_thm_recreg_1}
 \P(\lvert \fb_n \cap \fc_0 \lvert \geq a n^{(d-1)/2})>b
\end{equation}
for $n$ large enough. Analogously to \eqref{proof_lemma_recurrence_cube_size_1}, by Lemma~\ref{lemma_hitting_probability_RW_drift} and \eqref{proof_thm_recreg_1} the probability that at least one frog in $\fb_n$ is activated and reaches $0$ is at least
\begin{equation*}
 \Bigl(1-(1-cn^{-(d-1)/2})^{an^{(d-1)/2}}\Bigr)b \geq \bigl(1 - \e^{-ac}\bigr)b,
\end{equation*}
where $c=c(K,d,w)>0$ is a constant. Altogether we get by Lemma~\ref{lemma_sum_rv}
\begin{align*}
 \P(\text{$0$ visited infinitely often}) &=    \lim_{n \to \infty} \P(\text{$0$ is visited $\varepsilon n$ many times }) \\
                                         &\geq \liminf_{n \to \infty} \P\biggl( \sum_{i=1}^n \1_{\{\exists x \in \fb_i \cap \fc_{0} \colon x \to 0 \}} \geq \varepsilon n \biggr) \\
                                         &\geq \bigl(1 - \e^{-ac}\bigr)b - \varepsilon > 0
\end{align*}
for $\varepsilon$ sufficiently small. The claim now follows from Theorem~\ref{lemma_zero_one_law}. 
\end{proof}

\subsection*{Recurrence for $d = 2$ and arbitrary drift}

In this section we prove Theorem~\ref{thm_d=2_arbitrary_drift_i}. Throughout the section let $\alpha < 1$ be fixed.
We couple the frog model with independent site percolation on $\Z^2$. Let $K$ be an integer that will be chosen later. We tessellate $\Z^2$ with segments $(Q_x)_{x \in \Z^2}$ of size $2K+1$. For every $x = (x_1, x_2) \in \Z^2$ we define
\begin{align*}
 q_x &= q_x(K) = \bigl( x_1,  (2K+1)x_2\bigr), \\
 Q_x &= Q_x(K) = \{y \in \Z^2 \colon y_1 = x_1, \lvert y_2-(2K+1)x_2 \rvert \leq K\}.
\end{align*}
We call the site $x \in \Z^2$ open if there are frog paths from $q_x$ to $q_{x+e}$ in $Q_x$ for all $e \in \mathcal{E}_2$. As before, we denote the probability of a site to be open by $p(K,w)$. Note that this probability does not depend on $x$.

\begin{lemma}\label{lemma_d_2_arbitrary_drift_percolation_parameter_bound}
 For $\alpha <1$, in the frog model $\fm(2,\pi_{w,\alpha})$ we have
 \begin{equation*}
  \lim_{K \to \infty} \liminf_{w \to 0} p(K,w) =1.
 \end{equation*}
\end{lemma}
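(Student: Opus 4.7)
My plan is to decompose the event that site $x$ is open into four sub-events, one per neighbour $e \in \mathcal{E}_2$, and show each holds with probability tending to $1$ in the iterated limit. Throughout, the key observation is that, for fixed $K$ as $w \to 0$, the first $e_1$-step of any given frog occurs after a geometric number of time-steps of mean $1/w$; before that step the walk performs an ordinary balanced simple random walk on its vertical line.

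For the neighbours $\pm e_2$ I would use the two-link chain $q_x \to v_{\pm K} \to q_{x\pm e_2}$, where $v_j = q_x + j e_2$. In the first $\lfloor CK^2\rfloor$ steps of $q_x$'s walk the probability of having made no $e_1$-step is $(1-w)^{CK^2}\to 1$ as $w\to 0$, and conditionally on that event the walk is balanced SRW on the vertical line. Since the cover time of $[-K,K]$ by balanced SRW started at $0$ is $\Theta(K^2)$, taking $C$ large and then $w\to 0$ shows that $q_x$ visits every vertex of $Q_x$ (in particular $v_{\pm K}$) with probability $\to 1$. An identical cover-time argument applied to the independent walk of $v_{\pm K}$ shows that it reaches $q_{x\pm e_2}$ with probability $\to 1$ as $w\to 0$.

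For the neighbours $\pm e_1$ we have the $2K$ additional independent walks $(S^{v_j})_{1\le|j|\le K}$, all of which are activated once $q_x$ has covered $Q_x$. A chain $q_x \to v_j \to q_{x+e_1}$ in $Q_x$ exists as soon as some $v_j$ visits $q_{x+e_1}$, and by independence of these walks this fails with probability at most $(1-c(\alpha))^{2K}$, provided that for some $c(\alpha)>0$
\begin{equation*}
 \liminf_{w\to 0}\,\P_{v_j}\bigl(v_j\to q_{x+e_1}\bigr)\ge c(\alpha)\quad\text{uniformly in } |j|\le K.
\end{equation*}
I would prove this uniform lower bound by exhibiting an explicit sub-event of uniformly positive probability: (a) the first $e_1$-step of $v_j$ points in direction $+e_1$, probability $(1+\alpha)/2$; (b) at the moment of that step the $e_2$-coordinate lies within $\tfrac12\sqrt{1/w}$ of the target level $(2K+1)x_2$, which by the local CLT has probability bounded below uniformly in $w$ and $|j|\le K$, since the $\sqrt{1/w}$-scale of the Gaussian dominates the bounded shift $j$; (c) the subsequent balanced SRW excursion in column $x_1+1$, of geometric length of mean $1/w$, hits the target, which by the reflection principle has probability bounded below because the required displacement is at most half the SRW's typical range. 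The hardest point is precisely this uniform lower bound: naively $\P_{v_j}(v_j\to q_{x+e_1})$ could be expected to vanish as the $e_1$-weight vanishes, but the larger number of balanced $e_2$-steps between consecutive $e_1$-steps exactly compensates, stabilising it at a positive constant depending only on $\alpha$. A symmetric argument handles $-e_1$.

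Combining the four sub-events by a union bound on complements then gives
\begin{equation*}
 p(K,w)\ge 1 - o_w(1) - 2(1-c(\alpha))^{2K},
\end{equation*}
so $\liminf_{w\to 0} p(K,w)\ge 1-2(1-c(\alpha))^{2K}$, which tends to $1$ as $K\to\infty$.
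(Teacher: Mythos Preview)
Your argument is correct and follows essentially the same route as the paper. Both proofs rest on the same two facts: (i) for fixed $K$, as $w\to 0$ the frog at $q_x$ activates every frog in $Q_x$ with probability tending to $1$, since before its first $e_1$-step it performs SRW on its vertical segment for a time of order $1/w$; and (ii) each of the $2K$ activated frogs has, uniformly in $w$ small and in $|j|\le K$, a probability at least some $c(\alpha)>0$ of hitting the horizontally adjacent centre $q_{x\pm e_1}$, so that independence yields the factor $(1-c(\alpha))^{2K}$. The paper packages (ii) slightly differently---it shows a single frog visits all four neighbouring centres with probability at least $c$ via the chain $x\to q_{-e_2}\to q_{-e_1}\to q_{e_2}\to q_{e_1}$---whereas you handle $\pm e_2$ directly with a probability $\to 1$ argument and reserve the $2K$-fold independence for $\pm e_1$; but this is a cosmetic reorganisation, not a different idea.
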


\begin{proof}
We claim that there is a constant $c=c(\alpha)>0$ such that for any $K \in \N_0$ and $x \in Q_0$
\begin{equation}\label{proof_lemma_d_2_arbitrary_drift_percolation_parameter_bound_1}
 \liminf_{w \to 0} \P\Bigr(\bigcap_{ e \in \mathcal{E}_2} \{x \to q_{e}\} \Bigl) \geq c.
\end{equation}
We can estimate the probability in \eqref{proof_lemma_d_2_arbitrary_drift_percolation_parameter_bound_1} by
\begin{equation*}
\P\Bigr(\bigcap_{e \in \mathcal{E}_2} \{x \to q_{e}\} \Bigl) \geq \P\bigl(x \to q_{-e_2} \bigr) \P\bigl(q_{-e_{2}} \to q_{-e_1} \bigr) \P\bigl(q_{-e_{1}} \to q_{e_2} \bigr) \P\bigl(q_{e_{2}} \to q_{e_1} \bigr).
\end{equation*}
The probability of moving in $\pm e_2$-direction for $\lceil w^{-1} \rceil$ steps is $(1-w)^{\lceil w^{-1} \rceil}$. Conditioning on moving in this way, we just deal with a simple random walk on $\Z$. There exists a constant $c_1>0$ such that this random walk hits $-K$ within $\lceil w^{-1} \rceil$ steps with probability at least $c_1$ for all $w$ close to $0$.
Therefore,
\begin{equation} \label{proof_lemma_d_2_arbitrary_drift_percolation_parameter_bound_2}
\P\bigl(x \to q_{-e_2} \bigr) \geq c_1 (1-w)^{\lceil w^{-1} \rceil} 
                              \geq \frac{c_1}{4}.
\end{equation}
The probability of moving exactly once in $-e_1$-direction and otherwise in $\pm e_2$-direction within $\lceil w^{-1} \rceil+1$ steps is
\begin{equation*}
\bigl(\lceil w^{-1} \rceil +1\bigr) \frac{(1-\alpha)w}{2} (1-w)^{\lceil w^{-1} \rceil} \geq \frac{1-\alpha}{8}
\end{equation*}
for $w$ close to $0$. Therefore, analogously to \eqref{proof_lemma_d_2_arbitrary_drift_percolation_parameter_bound_2} there exists a constant $c_2>0$ such that
\begin{equation*}
\P\bigl(q_{-e_2} \to q_{-e_1} \bigr) \geq \frac{c_2(1-\alpha)}{8}
\end{equation*}
for $w$ sufficiently close to $0$. The two remaining probabilities $\P\bigl(q_{-e_{1}} \to q_{e_2} \bigr)$ and $\P\bigl(q_{e_{2}} \to q_{e_1} \bigr)$ can be estimated analogously, which implies \eqref{proof_lemma_d_2_arbitrary_drift_percolation_parameter_bound_1}.

If frog $0$ activates all frogs in $Q_0$ and any of these $2K$ frogs manages to visit the centres of all neighbouring segments, then $0$ is open. By independence of the trajectories of the individual particles in $Q_0$ this implies
\begin{equation}\label{proof_lemma_d_2_arbitrary_drift_percolation_parameter_bound_3}
 p(K,w) \geq \P\Bigl( \bigcap_{x \in Q_0} \{0 \to x\} \Bigr) \biggl(1- \Bigl(1-\P\Bigl(\bigcap_{1\leq i \leq 4} \{x \to q_{e_i}\}\Bigr)\Bigr)^{2K} \biggr).
\end{equation}
As in the proof of Lemma~\ref{lemma_recurrence_small_drift} one can show that for $w \to 0$ the first factor in \eqref{proof_lemma_d_2_arbitrary_drift_percolation_parameter_bound_3} converges to $1$. Therefore, taking limits in \eqref{proof_lemma_d_2_arbitrary_drift_percolation_parameter_bound_3} and using \eqref{proof_lemma_d_2_arbitrary_drift_percolation_parameter_bound_1} yields the claim.
\end{proof}

\begin{proof}[Proof of Theorem~\ref{thm_d=2_arbitrary_drift_i}]
By Lemma~\ref{lemma_d_2_arbitrary_drift_percolation_parameter_bound} we can choose $K$ big and $w$ small enough such that $p(K,w) > p_c(2)$, where $p_c(2)$ is the critical parameter for independent site percolation on $\Z^2$. As in the proof of Theorem~\ref{thm_d=2_arbitrary_weight_i} and Theorem~\ref{thm_d>2_arbitrary_weight} the coupling with supercritical percolation now yields recurrence of the frog model. As we rescaled the lattice $\Z^2$ slightly different this time, the box $B_n$ defined in the proof of Theorem~\ref{thm_d=2_arbitrary_weight_i} and Theorem~\ref{thm_d>2_arbitrary_weight} now corresponds to the box
\begin{equation*}
\fb_n = \{y \in \Z^2 \colon y_1 =-n,\, \lvert y_2\rvert \leq (2K+1)\sqrt{n} +K\}.
\end{equation*}
Since only asymptotics in $n$ matter for the proof, it otherwise works unchanged.
\end{proof}


\subsection*{Recurrence for arbitrary drift and $d \geq 3$}

The proof of Theorem \ref{thm_d>2_arbitrary_drift_i} again relies on the idea of comparing the frog model with percolation. But instead of looking at the whole space $\Z^d$ as in the previous proofs, we consider a sequence of $(d-1)$-dimensional hyperplanes $(H_{-n})_{n \in \N_0}$ with $H_{-n}$ as defined in \eqref{definition_hyperplane}. We compare the frogs in each hyperplane with supercritical percolation, ignoring the frogs once they have left their hyperplane and all the frogs from other hyperplanes. Within a hyperplane we now deal with a frog model without drift, but allow the frogs to die in each step with probability $w$ by leaving their hyperplane, i.e.~we are interested in $\fm^*(d-1,\pi_{\text{sym}},1-w)$. Hence, the argument does not depend on the value of the drift parameter $\alpha<1$. 

We start with one active particle in the hyperplane $H_0$. With positive probability this particle initiates an infinite frog cluster in $H_0 $ if $w$ and therefore the probability to leave the hyperplane is sufficiently small. Every frog eventually leaves $H_0$ and has for every $n \in \N$ a positive chance of activating a frog in the hyperplane $H_{-n}$, which might start an infinite cluster there. This is the only time where we need $\alpha <1$ in the proof of Theorem~\ref{thm_d>2_arbitrary_drift_i}. Using the denseness of such clusters we can then proceed as before.

We split the proof of Theorem~\ref{thm_d>2_arbitrary_drift_i} into two parts:

\begin{prop}\label{prop_d>2_arbitrary_drift_large_d}
 There is $d_0 \in \N$ and $w_r > 0$, independent of $d$ and $\alpha$, such that the frog model $\fm(d,\pi_{w,\alpha})$ is recurrent for all $0 \leq w \leq w_r$, $0 \leq \alpha < 1$ and $d \geq d_0$.
\end{prop}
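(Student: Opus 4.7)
The strategy is the one outlined at the start of this subsection. For each $n \in \N_0$ I restrict attention to the dynamics of the frogs while they are inside the hyperplane $H_{-n}$ and discard each frog the moment it leaves. Since $\pi_{w,\alpha}$ puts total weight $1-w$ on the $2(d-1)$ transverse moves (each of which stays inside $H_{-n}$) and weight $w$ on the steps $\pm e_1$ (both leaving $H_{-n}$), the restricted process inside $H_{-n}$ is exactly $\fm^*(d-1, \pi_{\text{sym}}, 1-w)$, and every activation it produces also occurs in the full model. The case $\alpha = 0$ is already covered by Theorem~\ref{thm_d>2_arbitrary_weight}, so I may assume from now on that $\alpha \in (0,1)$.

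The heart of the argument is to exhibit absolute constants $w_r > 0$ and $d_0 \in \N$ such that, for all $d \geq d_0$ and $0 \leq w \leq w_r$, $\fm^*(d-1, \pi_{\text{sym}}, 1-w)$ started from a single active frog produces an infinite activated cluster with positive probability. I would set this up via a renormalised independent site percolation on $\Z^{d-1}$ in the spirit of \eqref{def_box}: tessellate $\Z^{d-1}$ into boxes $Q_x(K)$ and declare $x$ open if for each $e \in \mathcal{E}_{d-1}$ the restricted model contains a frog path from $q_x$ to $q_{x+e}$ inside $Q_x(K)$. Following the templates of Lemmas~\ref{lemma_recurrence_cube_size} and~\ref{lemma_recurrence_small_drift}, the open probability can be lower-bounded by combining two ingredients: the shape theorem (Theorem~\ref{lemma_shape_theorem} with Remark~\ref{remark_shape}), which, after coupling the dying process to the lazy symmetric model up to a truncation time at which no frog has died with high probability, produces $\Omega(K^{d-1})$ activated frogs in $Q_x(K)$, and Lemma~\ref{lemma_hitting_probability_SRW} in $\Z^{d-1}$, which steers each of them to a given neighbouring box centre $q_{x+e}$ with probability $\Omega(K^{-(d-3)})$. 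Because $p_c(d-1) \to 0$ by Lemma~\ref{lemma_pc_high_d}, the resulting lower bound on the open probability is allowed to shrink with $d$, which is what leaves room for $w_r$ to be chosen independently of $d$. The main obstacle lies precisely here: calibrating $K$ and the truncation time so that the lower bound on the open probability outpaces $p_c(d-1)$ while $w$ remains below an absolute constant.

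Granted this key step, the conclusion mirrors the proof of Theorem~\ref{thm_d=2_arbitrary_weight_i}. With positive probability the frog initially at the origin produces an infinite activated cluster $\mathcal{C}_0 \subseteq H_0$. For any fixed $n \geq 1$, each frog in $\mathcal{C}_0$ reaches $H_{-n}$ in its full $d$-dimensional trajectory with probability at least $\bigl(\frac{1-\alpha}{1+\alpha}\bigr)^{n} > 0$ by Lemma~\ref{lemma_hitting_probability_hyperplane}---this is the only use of the hypothesis $\alpha < 1$---so a Borel--Cantelli argument using the independence of different frogs' trajectories delivers infinitely many activations in $H_{-n}$ almost surely, conditionally on $\lvert\mathcal{C}_0\rvert = \infty$. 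By the key step each such activation has positive probability of triggering an infinite cluster in $H_{-n}$, so $H_{-n}$ almost surely contains an infinite activated cluster on that event. Lemma~\ref{percolation_density} then produces at least $a n^{(d-1)/2}$ activated sites inside the box $\{-n\} \times [-\sqrt{n}, \sqrt{n}]^{d-1}$ with probability bounded below, and each of them reaches $0$ with probability $\Omega(n^{-(d-1)/2})$ by Lemma~\ref{lemma_hitting_probability_RW_drift}. Lemma~\ref{lemma_sum_rv} combined with the zero-one law of Theorem~\ref{lemma_zero_one_law} then upgrades the resulting positive-probability event $\{0 \text{ is visited infinitely often}\}$ to probability one.
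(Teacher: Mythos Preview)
Your overall architecture---restricting to hyperplanes, comparing with $\fm^*(d-1,\pi_{\text{sym}},1-w)$, and finishing via percolation density, Lemma~\ref{lemma_hitting_probability_RW_drift}, Lemma~\ref{lemma_sum_rv} and the zero--one law---matches the paper. But the step you yourself flag as ``the main obstacle'' is a genuine gap, and the tools you propose do not close it. The shape theorem (Theorem~\ref{lemma_shape_theorem}) and the hitting estimate of Lemma~\ref{lemma_hitting_probability_SRW} both carry constants that depend on the ambient dimension: the limit shape~$\mathcal A$, the random time~$\mathcal K$, and the constants $c_1,c_2$ appearing in the proof of Lemma~\ref{lemma_recurrence_cube_size} all vary with $d$. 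Hence the box size $K$ required to push the open probability above $p_c(d-1)$ depends on $d$, and then the survival threshold needed for your ``no frog has died by the truncation time'' coupling also depends on $d$. What you have outlined is precisely the argument behind Proposition~\ref{prop_d>2_arbitrary_drift_small_d} (via Lemma~\ref{lemma_frog_model_with_death_percolation_arbitrary_d}), which produces only a dimension-dependent threshold $w_r(d)$.

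To obtain a uniform $w_r$ the paper abandons the shape theorem in high dimension and works instead with boxes $Q'_x$ of \emph{fixed} side length~$3$. Inside such a box a frog has order $2d$ neighbours still in the box, and the paper compares the first $\sqrt d$ steps of the activation process to a supercritical branching process (Lemma~\ref{lemma_recurrence_high_d_K_d}): for every survival parameter $s>\tfrac34$ this yields at least $\mu^{\sqrt d}$ activated frogs in $W_x$ with probability at least an absolute $\gamma>0$. Splitting these frogs into $2d$ groups, one per neighbouring box, gives a ``good'' probability bounded below by an absolute constant~$\beta$ (Lemma~\ref{lemma_recurrence_high_d_percolation_parameter_bound}); since $p_c(d-1)\to 0$ by Lemma~\ref{lemma_pc_high_d}, eventually $\beta>p_c(d-1)$ and the comparison percolation is supercritical for every $w<\tfrac14$. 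A further subtlety your sketch does not address is that with side-$3$ boxes the entry point into a box need not be its centre, so the naive ``declare $x$ open'' scheme is not i.i.d.; the paper handles this with an exploration coupling (Lemma~\ref{lemma_recurrence_high_d_fc=c}) rather than a direct independent-percolation comparison.
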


\begin{prop}\label{prop_d>2_arbitrary_drift_small_d}
 For every $d\geq 3$ there is $w_r = w_r(d) > 0$, independent of $\alpha$, such that the frog model $\fm(d,\pi_{w,\alpha})$ is recurrent for all $0 \leq w \leq w_r$ and all $0 \leq \alpha < 1$.
\end{prop}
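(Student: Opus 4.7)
The plan is to run the renormalization/percolation argument from the proofs of Theorems~\ref{thm_d=2_arbitrary_weight_i} and \ref{thm_d>2_arbitrary_weight} inside each hyperplane $H_{-n}$ separately, and then to link the resulting dense in-hyperplane clusters together using backward excursions along the drift axis. The key structural observation is that if one tracks activated frogs only up to their first exit from $H_{-n}$, the restricted dynamics coincide with $\fm^*(d-1,\pi_{\text{sym}},1-w)$ on $H_{-n}\cong\Z^{d-1}$: at every step a frog stays in the hyperplane with probability $1-w$, and conditional on staying, each of the $2(d-1)$ transverse directions is equiprobable. Since $d-1\geq 2$, one can hope to place this killed symmetric model in the supercritical percolation regime by taking $w$ small, and crucially this entirely bypasses $\alpha$.

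Inside a single hyperplane I would perform the box renormalization of Lemmas~\ref{lemma_recurrence_cube_size} and \ref{lemma_recurrence_small_drift}: partition $\Z^{d-1}$ into cubes $Q_x$ of side $2K+1$, declare $x$ open if inside $Q_x$ the killed symmetric model contains frog paths from $q_x$ to each of its neighbouring centres, and denote by $p(K,w)$ the opening probability. Lemma~\ref{lemma_recurrence_cube_size} applied in dimension $d-1\geq 2$ (trivial when $d=3$ because planar symmetric SRW is recurrent; via the shape theorem when $d\geq 4$) gives $p(K,0)\to 1$ as $K\to\infty$. A Fatou-type continuity step exactly as in Lemma~\ref{lemma_recurrence_small_drift}, using that the box-opening event depends on only finitely many steps of finitely many frogs inside $Q_0$ (so killing by $(1-w)$ per step costs only $o(1)$ as $w\to 0$), yields $p(K,w)\to p(K,0)$ for each fixed $K$. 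Choosing $K$ large and $w_r=w_r(d)$ small enough, both independent of $\alpha$, one arranges $p(K,w)>p_c(d-1)$ for all $w\leq w_r$; hence a single activated frog in any hyperplane initiates an infinite in-hyperplane cluster with probability at least some $q=q(d)>0$, which by Lemma~\ref{percolation_density} is dense.

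The assumption $\alpha<1$ enters only in the seeding step. By Lemma~\ref{lemma_hitting_probability_hyperplane} and translation invariance, from any $x\in H_{-m}$ a single random walk reaches $H_{-m-k}$ with probability at least $\bigl((1-\alpha)/(1+\alpha)\bigr)^k>0$. Conditional on the in-$H_0$ cluster from $0$ being infinite (probability $\geq q$), independence of distinct frogs' trajectories forces almost surely infinitely many of them to visit $H_{-n}$; the first such arrival activates a sleeping frog in $H_{-n}$, and by the Markov property together with the independence of the trajectories used to build the in-$H_{-n}$ cluster (these are trajectories of frogs originally in $H_{-n}$, disjoint from the $H_0$-frogs already used), this seed initiates an infinite dense in-hyperplane cluster in $H_{-n}$ with conditional probability at least $q$. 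Hence the event $E_n$ that $H_{-n}$ contains a dense cluster of activated frogs satisfies $\P(E_n)\geq q^2$, uniformly in $n$. The endgame is then identical to that of Theorems~\ref{thm_d=2_arbitrary_weight_i} and \ref{thm_d>2_arbitrary_weight}: Lemma~\ref{percolation_density} puts $\Omega(n^{(d-1)/2})$ activated frogs into the box $\fb_n=\{x:x_1=-n,\ |x_i|\leq(2K+1)\sqrt{n}+K,\ 2\leq i\leq d\}$, each of which hits $0$ with probability $\geq cn^{-(d-1)/2}$ by Lemma~\ref{lemma_hitting_probability_RW_drift}, so conditional on $E_n$ at least one visits $0$ with probability $\geq 1-\e^{-ac}$, giving an unconditional lower bound $\delta:=q^2(1-\e^{-ac})>0$ uniform in $n$. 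Lemma~\ref{lemma_sum_rv} applied to the indicator events together with Theorem~\ref{lemma_zero_one_law} then closes the argument.

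The main technical obstacle I foresee is the faithful adaptation of the continuity argument of Lemma~\ref{lemma_recurrence_small_drift} to the killed model: one has to decompose the box-opening event for $\fm^*(d-1,\pi_{\text{sym}},1-w)$ over realisations indexed by the finite step counts of the finitely many frogs involved, and verify that the additional survival factor $(1-w)$ per step contributes only $o(1)$, so that Fatou yields $p(K,w)\to p(K,0)$. Once this adaptation, the hyperplane reduction to a killed symmetric model in $d-1$ dimensions, and the $\alpha<1$-driven seeding between consecutive hyperplanes are in place, everything else is a direct repetition of machinery already developed in the earlier recurrence proofs; the heart of the proof is really the hyperplane-to-killed-model correspondence and the observation that $w_r=w_r(d)$ can be chosen independently of $\alpha$.
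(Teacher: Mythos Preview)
Your proposal is correct and follows essentially the same route as the paper. The paper packages your in-hyperplane renormalization step (Lemma~\ref{lemma_recurrence_cube_size} plus the Fatou/continuity step in the survival parameter) into Lemma~\ref{lemma_frog_model_with_death_percolation_arbitrary_d}, and then says the proof of Proposition~\ref{prop_d>2_arbitrary_drift_small_d} is analogous to that of Proposition~\ref{prop_d>2_arbitrary_drift_large_d}; your write-up simply unpacks that analogy explicitly, including the hyperplane reduction to $\fm^*(d-1,\pi_{\text{sym}},1-w)$, the seeding via $\alpha<1$, and the density/endgame via Lemmas~\ref{percolation_density}, \ref{lemma_hitting_probability_RW_drift}, \ref{lemma_sum_rv} and Theorem~\ref{lemma_zero_one_law}. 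One small technical point you gloss over (as does the paper, which writes ``obviously''): the seed arriving in $H_{-n}$ need not land at a box centre $q_v$, which is what the percolation comparison requires; this is easily fixed either by noting that infinitely many frogs from $H_0$ reach $H_{-n}$ so a.s.\ one hits some $q_v$, or by letting the first activated frog in $H_{-n}$ walk to the nearest $q_v$ with probability bounded below depending only on $K$ and $w$.
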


We first prove Proposition~\ref{prop_d>2_arbitrary_drift_large_d}.
As indicated above we need to study the frog model with death and no drift in $\Z^{d-1}$. To increase the readability of the paper let us first work in dimension $d$ instead of $d-1$ and with a general survival parameter $s$, i.e.~we investigate $\fm^*(d, \pi_{\text{sym}}, s)$ for $d \geq 2$. 

We tessellate $\Z^{d}$ with cubes $(Q'_x)_{x \in \Z^{d}}$ of size $3^{d}$. More precisely, for $x \in \Z^{d}$ we define
\begin{align*}
 Q'_x &= \{y \in \Z^{d} \colon \lVert y-3x \rVert_{\infty} \leq 1\}.
\intertext{Further, for technical reasons, for $a \in (\frac23, 1)$ we define}
 W_x &= \{y \in Q'_x \colon \lVert y-3x \rVert_1 \leq ad\},
\end{align*}
where $\lVert z \rVert_1 = \sum_{i=1}^{2d} \lvert z_i \rvert$ is the graph distance from $z \in \Z^d$ to $0$.  Informally, $W_x$ is the set of all vertices in $Q'_x$ which are ``sufficiently close'' to the centre of the cube.
Consider the box $Q'_x$ for some $x \in \Z^{d}$ and let $o \in W_x$. If there are frog paths in $Q_x'$ from $o$ to vertices close to the centres of all neighbouring boxes, i.e.~if the event 
\begin{equation*}
\bigcap_{e \in \mathcal{E}_d} \bigcup_{y \in W_{x + e}} \{o \fp{Q'_x} y\}
\end{equation*}
occurs, we call the vertex $o$ good. Note that this event only depends on the trajectories of all the frogs originating in the cube $Q'_x$ and the choice of $o$. If $o$ is good and is activated, then also the neighbouring cubes are visited. We show that the probability of a vertex being good is bounded from below uniformly in $d$ and this bound does not depend on the choice of $o$.

\begin{lemma}\label{lemma_recurrence_high_d_percolation_parameter_bound}
Consider the frog model $\fm^*(d,\pi_{\text{sym}},s)$. There are constants $\beta > 0$ and $d_0 \in \N$ such that for all $d \geq d_0$, $s > \frac34$, $\frac23 < a < 2- \frac{1}{s}$, $x \in \Z^d$ and $o \in W_x$
\begin{equation*}
\P(\text{$o$ is good}) > \beta.
\end{equation*}
\end{lemma}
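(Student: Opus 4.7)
Because $Q'_x$ contains $3^d$ sites and the event we lower-bound is an intersection over $2d$ directions, no naive union/FKG bound can give a $d$-uniform estimate. The strategy is to compare the frog cluster inside $Q'_x$ starting from $o$ to a branching random walk whose offspring mean exceeds $1$ \emph{uniformly in $d$}, and then to exploit the eventual spread of this cluster to wake a frog in each neighbouring $W_{x+e}$.

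For any $z \in W_x$, the number of lattice neighbours of $z$ lying in $Q'_x$ is $2d - \|z - 3x\|_1 \geq (2-a)d$, so the probability that a single step of the walker starting from $z$ both survives and stays in $Q'_x$ is at least $s(1 - a/2)$. Iterating this $k$ times, and using that in high dimension the probability of returning to an already visited site within $O(1)$ steps is $o(1)$, the expected number of new frogs in $Q'_x$ activated by the walker from $z$ is at least
\[
(1-o(1))\sum_{k=1}^{\infty} \bigl(s(1-a/2)\bigr)^k \;=\; (1-o(1))\,\frac{s(1-a/2)}{1 - s(1-a/2)}.
\]
The assumption $a < 2 - 1/s$ is equivalent to $s(1-a/2) > 1/2$, which is precisely the condition that makes this series strictly larger than $1$. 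Hence, for $d$ large enough, the frog cluster inside $Q'_x$ dominates a supercritical Galton--Watson tree whose offspring mean is at least $1 + \eta$ for some $\eta > 0$ independent of $d$, and therefore survives with probability at least some $\beta_0 > 0$ uniform in $d$.

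On the event of survival, the BRW fills a positive fraction of $Q'_x$, and because the SRW step kernel is symmetric in all $2d$ coordinate directions, the cluster is isotropically spread. A concentration/counting argument then shows that with high conditional probability the number of activated vertices $z \in W_x$ on each face layer $\{y \colon y_i - 3x_i = \pm 1\}$ is of order at least $d$. A short check gives that any such $z$ with $z_i - 3x_i = 1$ satisfies $z + e_i \in W_{x+e_i}$, so with probability at least $s/(2d)$ (first step in direction $+e_i$) frog $z$ wakes up a vertex in $W_{x+e_i}$. With $\Omega(d)$ independent such frogs per face, the conditional probability that none of them activates a vertex in $W_{x+e}$ is at most $(1 - s/(2d))^{\Omega(d)}$, which is bounded away from $1$. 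Multiplying by $\beta_0$ gives the claimed uniform bound $\P(o \text{ is good}) > \beta$.

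\textbf{Main obstacle.} The hardest step is establishing that, conditional on survival of the branching process, the cluster simultaneously occupies \emph{all} $2d$ face layers of $Q'_x$ with uniform positive probability in $d$. A product bound over directions via FKG would give only $\beta_0^{2d}$, which vanishes with $d$; one therefore has to combine the perfect symmetry of the SRW step kernel with a quantitative estimate on the geometric growth of the BRW cluster, in order to conclude that once survival occurs the cluster is dense enough in every direction simultaneously. Controlling the boundary effects of walkers escaping $Q'_x$ (which is precisely the reason behind the stronger condition $a < 2 - 1/s$ and not merely $a < 1$) adds a further layer of care to the offspring estimate.
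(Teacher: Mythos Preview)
Your outline identifies the right mechanism (a branching comparison inside $Q'_x$ followed by sending frogs to the $2d$ neighbouring sets $W_{x+e}$), and you correctly flag the main obstacle. But the final counting step has a genuine gap, and it is exactly the step you tried to finesse.

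You claim that, conditional on survival of the branching process, each of the $2d$ face layers contains ``of order at least $d$'' activated frogs, each of which reaches $W_{x+e}$ with probability at least $s/(2d)$. That gives a per-direction failure probability of at most $(1-s/(2d))^{\Omega(d)}\approx \e^{-cs/2}$, which is indeed bounded away from $1$ --- but it is also bounded away from $0$. You still need the intersection over all $2d$ directions, and whether you use a product bound over your disjoint groups, FKG, or a union bound, a constant per-direction failure probability makes the $2d$-fold intersection probability vanish as $d\to\infty$. ``Multiplying by $\beta_0$'' does not fix this. The earlier claim that on survival the cluster fills ``a positive fraction of $Q'_x$'' (which would give exponentially many frogs per face and would rescue the argument) is not established by a mere supercriticality statement: a supercritical Galton--Watson tree with mean $1+\eta$ grows like $(1+\eta)^n$, which need not be comparable to $3^d$.

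The paper resolves this quantitatively: it runs a branching comparison for $\sqrt{d}$ steps (Lemma~\ref{lemma_recurrence_high_d_K_d}) to obtain, with probability $\gamma>0$ uniform in $d$, at least $\mu^{\sqrt{d}}$ activated frogs inside $W_x$. These frogs are then split into $2d$ disjoint groups of size $\mu^{\sqrt{d}}/(2d)$; since each frog reaches its assigned $W_{x+e}$ in at most three steps with probability at least $(s/(2d))^3$, the per-direction failure probability is at most $\bigl(1-(s/(2d))^3\bigr)^{\mu^{\sqrt{d}}/(2d)}$, which is $o(1/d)$ because $\mu^{\sqrt{d}}$ beats any polynomial in $d$. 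Now a straightforward union bound over the $2d$ directions gives
\[
\P(o\text{ is good}) \;\geq\; \gamma\Bigl(1-\bigl(1-(s/(2d))^3\bigr)^{\mu^{\sqrt d}/(2d)}\Bigr)^{2d} \;\geq\; \frac{\gamma}{2}
\]
for $d$ large. The essential point your sketch is missing is that one needs \emph{super-polynomially} many activated frogs, not $\Omega(d)$, to kill the $2d$-fold intersection; the condition $a<2-1/s$ enters in the paper not as a boundary-escape correction but as the supercriticality condition for the $\sqrt{d}$-step branching comparison that produces this super-polynomial count.
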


To show this we first need to prove that many frogs in the cube are activated. In the proof of Theorem \ref{thm_d=2_arbitrary_weight_i} and Theorem \ref{thm_d>2_arbitrary_weight} this is done by means of Lemma~\ref{lemma_recurrence_cube_size} using the shape theorem. Here, we use a lemma that is analogous to Lemma~2.5 in \cite{AMP02pt}.

\begin{lemma}\label{lemma_recurrence_high_d_K_d}
 Consider the frog model $\fm^*(d,\pi_{\text{sym}},s)$. There exist constants $\gamma >0$, $\mu > 1$ and $d_0 \in \N$ such that for all $d \geq d_0$, $s > \frac34$, $\frac23 < a < 2- \frac{1}{s}$ and $o \in W_0$ we have
 \begin{equation*}
  \P\Bigl( \bigl\lvert \bigl\{y \in W_0 \colon o \fp{Q'_0} y \bigr\}\bigr\rvert \geq \mu^{\sqrt{d}}  \Bigr) \geq \gamma.
 \end{equation*}
\end{lemma}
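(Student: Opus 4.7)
The plan is to compare the growth of the frog cluster $\{y \in W_0 : o \fp{Q'_0} y\}$ with a supercritical Galton--Watson branching process, and then apply a standard survival estimate to guarantee a population of size at least $\mu^{\sqrt d}$ at generation $\lceil \sqrt d \rceil$.

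Concretely, I would explore the cluster in generations: generation $0$ is $\{o\}$, and the offspring of a vertex $z$ in generation $n$ are the vertices of $W_0$ first visited by the trajectory of frog $z$ (killed independently at rate $1-s$ per step) that have not appeared in any previous generation. The core estimate is that, uniformly over $z \in W_0$ and over the history of the exploration so far, the expected offspring count exceeds some $\mu > 1$ once $d$ is large enough, with a bounded second moment. For the lower bound on the mean, one uses three ingredients: the walk from $z$ has mean lifetime $s/(1-s) > 3$; in high dimension each surviving step lands on a previously unseen vertex with probability close to $1$ as long as the cumulative cluster is much smaller than $\lvert W_0 \rvert$; and, using the technical constraint $a \in (\tfrac{2}{3},\, 2 - 1/s)$ to balance the $\lVert \cdot \rVert_1$-drift of the walk against the killing rate, a constant fraction of the walk's early steps lands in $W_0$ before it either escapes $Q'_0$ or dies.

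Given the branching comparison, a standard Galton--Watson estimate (positivity of survival when the mean exceeds $1$, combined with a Kesten--Stigum type lower bound or a direct second-moment argument) yields that with probability at least some $\gamma > 0$ the population at generation $\lceil \sqrt d \rceil$ has size at least $\mu^{\sqrt d}$. The main obstacle is maintaining the coupling for $\sqrt d$ generations in the presence of dependencies: different frogs' trajectories may compete for the same target vertex or revisit previously explored sites. This is controlled by the observation that the cumulative cluster size through generation $\lceil \sqrt d \rceil$ is at most $\mu^{\sqrt d} = e^{O(\sqrt d)}$, which is exponentially smaller than $\lvert W_0 \rvert$ --- itself of order $3^d$ up to polynomial factors, since $a > \tfrac{2}{3}$ exceeds the mean of $\lVert \cdot \rVert_1$ under the uniform distribution on $\{-1,0,1\}^d$. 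I expect the detailed bookkeeping of this coupling, rather than the individual random-walk estimates, to be the main technical difficulty.
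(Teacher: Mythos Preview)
Your high-level plan---compare the cluster growth to a supercritical Galton--Watson process over $\lceil\sqrt d\rceil$ generations---matches the paper's. But your implementation diverges from the paper's in a way that creates a real gap.

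You take the offspring of $z$ to be the \emph{new} $W_0$-vertices visited by frog $z$'s entire trajectory, and you claim the expected offspring exceeds some $\mu>1$ \emph{uniformly over $z\in W_0$}. This is the problematic step. Take $z$ on the $\lVert\cdot\rVert_1$-boundary of $W_0$, say $\lVert z\rVert_1=\lfloor ad\rfloor$; from there a single SRW step lands in $W_0$ only with probability about $a/2$, and the unconditioned $\lVert\cdot\rVert_1$-process has positive drift $1-a$ \emph{away} from $W_0$. So the expected number of distinct $W_0$-vertices visited before death is a constant depending on $a$ and $s$, not something you can push above $1$ by taking $d$ large. Your justification (``mean lifetime $s/(1-s)>3$'', ``constant fraction of early steps in $W_0$'') does not establish the required lower bound near the boundary, and the constraint $a<2-\tfrac1s$ does not enter your argument in any concrete way. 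Since $o$ itself may sit on the boundary of $W_0$, you cannot sidestep this by arguing that the process stays in the interior.

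The paper avoids this by decoupling the two issues. First it runs a \emph{synchronized} branching in the larger set $Q'_0$: at step $k$ every surviving particle sits in the shell $\mathcal S_k=\{x\in Q'_0:\lVert x-o\rVert_1=k,\ \lVert x-o\rVert_\infty=1\}$ and jumps to a uniform neighbour; if that neighbour lies in $\mathcal S_{k+1}$ and no other particle targets it, both the particle and the freshly woken frog enter generation $k+1$. This ``0 or 2'' branching has mean roughly $2s\cdot|\mathcal D_x|/(2d)\approx s(2-a)$, and the condition $a<2-\tfrac1s$ is exactly what makes this exceed $1$. Collisions are handled by an explicit union bound (at most $2^{\sqrt d}$ particles, each with collision probability $e^{-cd}$). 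Only \emph{after} obtaining $c_3^{\sqrt d}$ particles in $\mathcal S_{\sqrt d}$ does the paper check $W_0$-membership, via a separate random-walk argument on $\lVert\cdot\rVert_1$: conditioned on staying in $Q'_0$, the $\lVert\cdot\rVert_1$-process actually drifts \emph{downward} once $\lVert\cdot\rVert_1>\tfrac{2d}{3}$, so a constant fraction of the $c_3^{\sqrt d}$ particles land in $W_0$.

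In short: replace your full-trajectory exploration by the paper's shell-by-shell growth inside $Q'_0$, and defer the $W_0$ constraint to a second step. That is where the roles of $a>\tfrac23$ and $a<2-\tfrac1s$ separate cleanly.
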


\begin{proof}[Proof of Lemma \ref{lemma_recurrence_high_d_K_d}]
The proof consists of two parts. In the first part we show that with positive probability there are exponentially many vertices in $Q'_0$ reached from $o$ by frog paths in $Q'_0$, and in the second part we prove that many of these vertices are indeed in $W_0$. For the first part we closely follow the proof of Lemma~2.5 in \cite{AMP02pt} and rewrite the details for the convenience of the reader.

We examine the frog model with initially one active frog at $o$ and one sleeping frog at every other vertex in $Q'_0$ for $\sqrt{d}$ steps in time. Consider the sets $\mathcal{S}_0=\{o\}$ and $\mathcal{S}_k = \{x \in Q'_0 \colon \lVert x-o \rVert_1=k, \lVert x-o \rVert_{\infty}=1\}$ for $k \geq 1$ and let $\xi_k$ denote the set of active frogs which are in $\mathcal{S}_k$ at time $k$. We will show that, conditioned on an event to be defined later, the process $(\xi_k)_{k \in \N_0}$ dominates a process $(\tilde{\xi_k})_{k \in \N_0}$, which again itself dominates a supercritical branching process. The process $(\tilde{\xi_k})_{k \in \N_0}$ is defined as follows. Initially, there is one particle at $o$. Assume that the process has been constructed up to time $k \in \N_0$. In the next step each particle in $\tilde{\xi}_k$ survives with probability $s$. If it survives, it chooses one of the neighbouring vertices uniformly at random. If that vertex belongs to $\mathcal{S}_{k+1}$ and no other particle in $\tilde{\xi}_k$ intends to jump to this vertex, the particle moves there, activates the sleeping particle, and both particles enter $\tilde{\xi}_{k+1}$. Otherwise, the particle is deleted. In particular, if two or more particles attempt to jump to the same vertex, all of them will be deleted. Obviously, $\tilde{\xi}_k \subseteq \xi_k$ for all $k \in \N_0$. 

First, we show that for $d$ large it is unlikely that two particles in $\tilde{\xi}_k$ attempt to jump to the same vertex. To make this argument precise we need to introduce some notation. For $x \in \mathcal{S}_k$ and $y \in \mathcal{S}_{k+1}$ with $\lVert x-y \rVert_1=1$ define 
\begin{align*}
\mathcal{D}_x &= \{z \in \mathcal{S}_{k+1} \colon \lVert x-z\rVert_1 = 1\},\\
\mathcal{A}_y &= \{z \in \mathcal{S}_k \colon \lVert z-y \rVert_1 =1 \},\\
\mathcal{E}_x &= \{z \in \mathcal{S}_k \colon \mathcal{D}_x \cap \mathcal{D}_z \neq \emptyset \}.
\end{align*}
$\mathcal{D}_x$ denotes the set of possible descendants of $x$, $\mathcal{A}_y$ the set of ancestors of $y$ and $\mathcal{E}_x$ the set of enemies of $x$. Note that $\mathcal{E}_x = \bigcup_{y \in \mathcal{D}_x} (\mathcal{A}_y \setminus \{x\})$ is a disjoint union.
Let $n_x=\sum_{i=1}^d \1_{\{o_i=0,\, x_i\neq0\}}$. Then one can check that
\begin{align}\label{proof_recurrence_high_d_K_d_0}
 \lvert \mathcal{D}_x\rvert &= 2(d-\lVert o \rVert_1-n_x) + \lVert o \rVert_1 - (k-n_x) = 2d -\lVert o \rVert_1 - k - n_x,\\
 \lvert \mathcal{A}_y\rvert &= k+1. \nonumber
\end{align}
For $x \in \mathcal{S}_k$ let $\chi(x)$ denote the number of particles of $\tilde{\xi}_k$ in $x$. Note that $\chi(x) \in \{0,2\}$ for any $x \in \mathcal{S}_k$ with $k \in \N$.

Let $\zeta_{xy}^k$ denote the indicator function of the event that there is $z \in \mathcal{E}_x$ with $\chi(z)\geq 1$ such that one of the particles at $z$ intends to jump to $y$ at time $k+1$. If $\zeta_{xy}^k=1$, then a particle on $x$ cannot move to $y$ at time $k+1$.

Further, we introduce the event $U_x= \{\chi(z)=2 \text{ for all } z \in \mathcal{E}_x\}$. This event describes the worst case for $x$, when it is most likely that particles at $x$ will not be able to jump.
For $k \leq \sqrt{d}$ we have
\begin{equation*}
 \P(\zeta_{xy}^k=1) \leq \P(\zeta_{xy}^k=1 \mid U_x) \leq \sum_{z \in \mathcal{A}_y \setminus \{x\}} \frac{2s}{2d} = \frac{ks}{d} \leq \frac{1}{\sqrt{d}}.
\end{equation*}
Given $\sigma > 0$ we choose $d$ large such that $\P(\zeta_{xy}^k=1) < \sigma$ for all $k \leq \sqrt{d}$.
Now, we consider the set of all descendants $y$ of $x$ such that there is a particle at some vertex $z \in \mathcal{E}_x$ that tries to jump to $y$ at time $k+1$. This set contains $\sum_{y \in \mathcal{D}_x} \zeta_{xy}^k$ elements. Let $\zeta_x^k$ denote the indicator function of the event $\bigl\{\sum_{y \in \mathcal{D}_x} \zeta_{xy}^k > 2\sigma d\bigr\}$. If $\zeta_{x}^k=1$, then more than $2\sigma d$ of the $2d$ neighbours of $x$ are blocked to a particle at $x$.

The random variables $\{\zeta_{xy}^k \colon y \in \mathcal{D}_x\}$ are independent with respect to $\P(\cdot \mid U_x)$ as $\mathcal{E}_x = \bigcup_{y \in \mathcal{D}_x} (\mathcal{A}_y \setminus \{x\})$ is a disjoint union. Using $2d-ad-2k \leq \lvert\mathcal{D}_x\rvert \leq 2d$ and a standard large deviation estimate we get for $k \leq \sqrt{d}$
\begin{align*}
 \P(\zeta_x^k=1) &\leq \P\biggl(\sum_{y \in \mathcal{D}_x} \zeta_{xy}^k > 2\sigma d \Bigm\vert U_x\biggr) \\
                 &\leq \P\biggl(\frac{1}{\lvert \mathcal{D}_x\rvert} \sum_{y \in \mathcal{D}_x} \zeta_{xy}^k > \sigma \Bigm\vert U_x \biggr)\\
                 &\leq \e^{-c_1\lvert\mathcal{D}_x\rvert} \\
                 &\leq \e^{-c_2 d}
\end{align*}
with constants $c_1, c_2 > 0$. Next, let us consider the bad event
\begin{equation*}
 B = \bigcup_{k=1}^{\sqrt{d}} \bigcup_{x \in \tilde{\xi}_k} \{\zeta_x^k=1\}.
\end{equation*}
Then with $\lvert\tilde{\xi}_k\rvert \leq 2^k \leq 2^{\sqrt{d}}$ we get
\begin{equation*}
 \P(B) \leq \sqrt{d} \cdot 2^{\sqrt{d}} \cdot \e^{-c_2 d}.
\end{equation*}
In particular $\P(B)$ can be made arbitrarily small for $d$ large.
Conditioned on $B^c$, in each step for every particle there are at least 
\begin{equation*}
\lvert\mathcal{D}_x\rvert-2 \sigma d -1 \geq (2-a-2\sigma)d - 3 \sqrt{d} 
\end{equation*}
available vertices in $\mathcal{S}_{k+1}$, i.e.~vertices a particle at $x$ can jump to in the next step. Thus, conditioned on $B^c$, the process $\tilde{\xi}_k$ dominates a branching process with mean offspring at least
\begin{equation*}
 \frac{\bigl((2-a-2\sigma)d - 3 \sqrt{d}\bigr) \cdot 2 \cdot s}{2d}.
\end{equation*}
For $\sigma$ small and $d$ large the mean offspring is bigger than $1$ as we assumed $a < 2-\frac{1}{s}$. Since a supercritical branching process grows exponentially with positive probability, there are constants $c_3 >1$, $q \in (0,1)$ that do not depend on $d$ such that
\begin{equation}\label{proof_recurrence_high_d_K_d}
\P\bigl( \lvert\tilde{\xi}_{\sqrt{d}}\rvert \geq c_3^{\sqrt{d}}\bigr) \geq q.
\end{equation}
For the second part of the proof condition on the event $\bigr\{\lvert\tilde{\xi}_{\sqrt{d}}\rvert \geq c_3^{\sqrt{d}}\bigl\}$ and choose $0 < \varepsilon <a-\frac23$. If $\lVert o \rVert_1 \leq (a-\varepsilon)d$, all particles of $\tilde{\xi}_{\sqrt{d}}$ are in $W_0$ for $d$ large. This immediately implies the claim of the lemma. Otherwise, let $n=\lvert\tilde{\xi}_{\sqrt{d}}\rvert$, enumerate the particles in $\tilde{\xi}_{\sqrt{d}}$ and let $\tilde{S}^i$, $1 \leq i \leq n$, denote the position of the $i$-th particle. Further, we define for $1 \leq i \leq n$
\begin{equation*}
 X_i =
 \begin{cases}
  1 & \text{if $\lVert \tilde{S}^i \rVert_1 \leq \lVert o \rVert_1 $}, \\
  0 & \text{otherwise.}
 \end{cases}
\end{equation*}
It suffices to show that $\P(X_1=1)>0$. Then Lemma~\ref{lemma_sum_rv} applied to the random variables $X_1, \ldots, X_n$ implies that with positive probability a positive proportion of the particles in $\tilde{\xi}_{\sqrt{d}}$ indeed have $L_1$-norm smaller than $o$, and are thus in $W_0$. Together with \eqref{proof_recurrence_high_d_K_d} this finishes the proof.

For the proof of the claim let $\tilde{S}^1_k$ denote the position of the ancestor of $\tilde{S}^1$ in $\mathcal{S}_k$, where $0 \leq k \leq \sqrt{d}$. Note that $\tilde{S}^1_0 = o$ and $\tilde{S}^1_{\sqrt{d}} = \tilde{S}^1$.

We are interested in the process $(\lVert \tilde{S}_k^1 \rVert_1)_{1 \leq k \leq \sqrt{d}}$. By the construction of the process $(\tilde{\xi}_k)_{k \in \N_0}$ it either increases or decreases by $1$ in every step. The positions $\tilde{S}_k^1$ and $\tilde{S}_{k+1}^1$ differ in exactly one coordinate. If this coordinate is changed from $0$ to $\pm 1$, then $\lVert \tilde{S}_{k+1}^1\rVert_1$ = $\lVert \tilde{S}_k^1 \rVert_1 +1$. If it is changed from $\pm 1$ to $0$, then we have $\lVert \tilde{S}_{k+1}^1\rVert_1$ = $\lVert \tilde{S}_k^1 \rVert_1 -1$. There are at least $(a-\varepsilon)d-\sqrt{d}$ many $\pm 1$-coordinates in $\tilde{S}_k^1$ that can be changed to $0$. As we also know that $\tilde{S}_{k+1}^1 \in \mathcal{D}_{\tilde{S}_k^1}$, we have for all $k \leq \sqrt{d}$ by \eqref{proof_recurrence_high_d_K_d_0} and the choice of $\varepsilon$
\begin{equation*}
 \P\bigl(\lVert \tilde{S}_{k+1}^1\rVert_1 = \lVert \tilde{S}_k^1 \rVert_1 -1\bigr) 
	\geq \frac{(a-\varepsilon)d-\sqrt{d}}{\lvert\mathcal{D}_{\tilde{S}_k^1}\rvert} 
	\geq \frac{(a-\varepsilon)d - \sqrt{d}}{2d - (a-\varepsilon)d} 
	> \frac12
\end{equation*}
for $d$ large. Hence, $\lVert \tilde{S}_k^1 \rVert_1$ dominates a random walk with drift on $\Z$ started in $\lVert o \rVert_1$. Therefore, 
\begin{equation*}
\P(X_1 = 1) = \P\bigl(\lVert \tilde{S}_{\sqrt{d}}^1 \rVert_1 \leq \lVert o \rVert_1\bigr) \geq \frac12,
\end{equation*}
which finishes the proof.
\end{proof}

\begin{proof}[Proof of Lemma~\ref{lemma_recurrence_high_d_percolation_parameter_bound}]
By Lemma~\ref{lemma_recurrence_high_d_K_d}, with probability at least $\gamma$ there are frog paths in $Q'_x$ from $o$ to at least $\mu^{\sqrt{d}}$ vertices in $W_x$ for $d$ large. We divide the frogs on these vertices into $2d$ groups of size at least $\mu^{\sqrt{d}}/2d$ and assign each group the task of visiting one of the neighbouring boxes $W_{x+e}$, $e \in \mathcal{E}_d$. Notice that this job is done if at least one of the frogs in the group visits at least one vertex in the neighbouring box. If all groups succeed, $o$ is good. Any frog in any group is just three steps away from its respective neighbouring box $W_{x+e}$, $e \in \mathcal{E}_d$, and thus has probability at least $(\frac{s}{2d})^3$ of achieving its group's goal. Hence,
\begin{equation*}
\P(\text{$o$ is good}) \geq  \Bigl(1- \Bigl(1-\Bigl(\frac{s}{2d}\Bigr)^3\Bigr)^{\mu^{\sqrt d}/{2d}} \Bigr)^{2d} \gamma 
                       \geq \frac{\gamma}{2}
\end{equation*}
for $d$ large.  
\end{proof}

In the other recurrence proofs we couple the frog model with percolation by calling a cube open if its centre is good. Here, the choice of a ``starting'' vertex, like the centre, is not independent of the other cubes. Therefore, we cannot directly couple the frog model with independent percolation. However, the following lemma allows us to compare the distributions of a frog cluster and a percolation cluster.

\begin{lemma}\label{lemma_recurrence_high_d_fc=c}
Consider the frog model $\fm^*(d,\pi_{\text{sym}},s)$. Let $\beta >0$ and assume that $\P(\text{$o$ is good}) > \beta$ for all $o \in W_x$, $x \in \Z^d$. Further, consider independent site percolation on $\Z^d$ with parameter $\beta$. Then for all sets $A \subseteq \Z^d$, $v \in \Z^d$ and for all $k \geq 0$
\begin{equation*}
 \P(\lvert A \cap C_v\rvert \geq k) \leq \P\Bigl(\Bigl\lvert \bigcup_{x \in A}Q'_x\cap \fc_{3v}^*\Bigr\rvert \geq k\Bigr).
\end{equation*}
\end{lemma}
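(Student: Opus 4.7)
The plan is to construct a coupling between the frog model $\fm^*(d,\pi_{\text{sym}},s)$ and an independent site percolation of parameter $\beta$ on $\Z^d$ under which $C_v$ is contained in the random set $M_v := \{x \in \Z^d \colon Q'_x \cap \fc^*_{3v} \neq \emptyset\}$. Since the cubes $\{Q'_x\}_{x \in \Z^d}$ are pairwise disjoint and each $x \in A \cap M_v$ contributes at least one vertex to $\bigcup_{x \in A} Q'_x \cap \fc^*_{3v}$, the inclusion $C_v \subseteq M_v$ yields deterministically $\lvert A \cap C_v\rvert \leq \lvert A \cap M_v \rvert \leq \lvert \bigcup_{x \in A} Q'_x \cap \fc^*_{3v}\rvert$, which implies the stochastic inequality.

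To build the coupling I would run a breadth-first exploration on $\Z^d$ starting at $v$, attaching to each queued cube $x$ an ``entry vertex'' $o_x \in W_x$; initially the pair $(v, 3v)$ is queued, using that $3v \in W_v$. When $(x, o_x)$ is popped from the queue, expose the trajectories of all frogs originating in $Q'_x$ and draw an independent $\mathrm{Uniform}[0,1]$ variable $U_x$. This reveals whether $o_x$ is good; set
\begin{equation*}
 Z_x := \mathbf{1}\{o_x \text{ is good}\} \cdot \mathbf{1}\bigl\{U_x \leq \beta/\P(o_x \text{ is good})\bigr\}.
\end{equation*}
Trajectories in distinct cubes are independent, so the conditional probability that $o_x$ is good given everything exposed before step $x$ equals $\P(o_x \text{ is good}) \geq \beta$; hence $Z_x$ is Bernoulli$(\beta)$ conditionally on the past, and $Z_x = 1$ forces $o_x$ to be good. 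If $o_x$ is good, pick (via a fixed deterministic rule) for each $e \in \mathcal{E}_d$ some $o_{x+e} \in W_{x+e}$ with $o_x \fp{Q'_x} o_{x+e}$, and enqueue $(x+e, o_{x+e})$ whenever $x+e$ has not yet been visited.

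Let $\hat{M}_v$ be the set of cubes $x$ for which $o_x$ was declared good. Telescoping the relations $3v \fp{Q'_v} o_{x_1} \fp{Q'_{x_1}} \cdots \fp{Q'_{x_{n-1}}} o_x$ along the BFS tree path from $v$ to any $x \in \hat{M}_v$ gives $o_x \in \fc^*_{3v}$, so $\hat{M}_v \subseteq M_v$. On the other hand, the $Z_x$ exposed in BFS order are i.i.d.\ Bernoulli$(\beta)$ by the conditional-Bernoulli property and the tower rule; completing them with fresh independent Bernoulli$(\beta)$ variables on unvisited cubes produces a genuine independent site percolation of parameter $\beta$ on $\Z^d$. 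Any $Z$-open path from $v$ is entirely traversed by the BFS, because $Z_x = 1$ forces $o_x$ to be good and therefore forces every neighbour of $x$ to be enqueued; thus the percolation cluster of $v$ is contained in $\hat{M}_v \subseteq M_v$.

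The key conceptual obstacle is that the ``good'' event at cube $x$ depends on the entry vertex $o_x$, which is itself random and measurable with respect to trajectories exposed at earlier stages, so the coupling cannot be performed site-wise and statically but must be built sequentially through the exploration. The feature that makes the sequential step work is that trajectories inside $Q'_x$ are independent of everything outside $Q'_x$, so conditionally on $o_x$ the probability of goodness is still at least $\beta$ no matter which $o_x$ the earlier exploration produced, and this is what legitimises the conditional Bernoulli$(\beta)$ draw.
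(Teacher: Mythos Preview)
Your proposal is correct and follows essentially the same approach as the paper: a sequential exploration of cubes with a randomly determined entry vertex $o_x\in W_x$, combined with an auxiliary thinning variable to turn the event ``$o_x$ is good'' (which has probability $\ge\beta$) into an exact Bernoulli$(\beta)$ step, and then an identification of the exploration with the standard cluster-exploration of i.i.d.\ site percolation. The only cosmetic differences are that the paper uses a fixed global ordering instead of BFS and indexes its thinning Bernoullis by the entry vertex $o$ rather than by the cube $x$; your closing paragraph pinpoints exactly the same conceptual obstacle the paper's construction is designed to overcome.
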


\begin{proof}
For technical reasons we introduce a family of independent Bernoulli random variables $(X_o)_{o \in \Z^d}$ which are also independent of the choice of all the trajectories of the frogs and satisfy $\P(X_o=1) = \P(\text{$o$ is good})^{-1}\beta$. Their job will be justified soon. Further, we fix an ordering of all vertices in $\Z^d$.

Now we are ready to describe a process that explores a subset of the frog cluster $\fc_{3v}^*$. Its distribution can be related to the cluster $C_v$ in independent site percolation with parameter $\beta$. The process is a random sequence $(R_t, D_t, U_t)_{t\in \N_0}$ of tripartitions of $\Z^d$. As the letters indicate, $R_t$ will contain all sites reached by time $t$, $D_t$ all those declared dead by time $t$, and $U_t$ the unexplored sites. We construct the process in such a way that for all $t \in \N_0$, $x \in R_t$ and $e \in \mathcal{E}_d$ there is $y \in W_{x +e}$ such that there is a frog path from $3v$ to $y$ in $\bigcup_{x \in R_t}Q'_x$. We start with $R_0 = D_0 = \emptyset$ and $U_0 = \Z^d$. If $3v$ is good and $X_{3v}=1$, set $U_1 = \Z^d \setminus \{v\}$, $R_1=\{v\}$, and $D_1=\emptyset$. Otherwise, stop the algorithm. If the process is stopped at time $t$, let $U_s = U_{t-1}$, $R_s = R_{t-1}$ and $D_s = D_{t-1}$ for all $s \geq t$. Assume we have constructed the process up to time $t$. Consider the set of all sites in $U_t$ that have a neighbour in $R_t$. If it is empty, stop the process. Otherwise, pick the site $x$ in this set with the smallest number in our ordering. By the choice of $x$ there is $y \in W_x$ such that there is a frog path from $3v$ to $y$ in $\bigcup_{z \in R_t} Q'_z$. Choose any vertex $y$ with this property. If $y$ is good and $X_y = 1$, set 
\begin{equation*}
R_{t+1} = R_t \cup \{x\},\ D_{t+1} = D_t, \ U_{t+1}=U_t \setminus \{x\}. 
\end{equation*}
Otherwise, update the sets as follows:
\begin{equation*}
R_{t+1} = R_t,\ D_{t+1} = D_t \cup \{x\}, \ U_{t+1}=U_t \setminus \{x\}
\end{equation*}
In every step $t$ the algorithm picks an unexplored site $x$ and declares it to be reached or dead, i.e.~added to the set $R_{t}$ or $D_t$. The probability that $x$ is added to $R_t$ equals $\beta$. This event is (stochastically) independent of everything that happened before time $t$ in the algorithm. Note that every unexplored neighbour of a reached site will eventually be explored due to the fixed ordering of all sites.

In the same way we can explore independent site percolation on $\Z^d$ with parameter $\beta$. Construct a sequence $(R_t', D_t', U_t')_{t\in \N_0}$ of tripartitions of $\Z^d$ as above, but whenever the algorithm evaluates whether a site $x$ is declared reached or dead we toss a coin independently of everything else. Note that $\bigcup_{t \in \N_0} R_t' = C_v$, where $C_v$ is the cluster containing $v$. This exploration process is well known for percolation, see e.g.~\cite[Proof of Theorem 4, Chapter 1]{BR06}.

By construction, $\bigcup_{t\in \N_0} R_t$ equals the percolation cluster $C_v$ in distribution. The claim follows since for every $x \in \bigcup_{t\in \N_0} R_t$ there is a $y \in W_x$ such that there is a frog path from $3v$ to $y$, i.e.~$y \in \fc_{3v}^*$. 
\end{proof}

Now we can show Proposition~\ref{prop_d>2_arbitrary_drift_large_d}. Note that we are again working with the frog model $\fm(d,\pi_{w,\alpha})$ (without death).

\begin{proof}[Proof of Proposition~\ref{prop_d>2_arbitrary_drift_large_d}]
Throughout this proof we assume that $d$ is so large that Lemma~\ref{lemma_recurrence_high_d_percolation_parameter_bound} is applicable for $d-1$ and $p_c(d-1) < \beta$, where $\beta$ is the constant introduced in the statement of Lemma~\ref{lemma_recurrence_high_d_percolation_parameter_bound}. This is possible because of Lemma~\ref{lemma_pc_high_d}. These assumptions in particular imply that we can use Lemma~\ref{lemma_recurrence_high_d_fc=c} and that the percolation introduced there is supercritical.

Consider the sequence of hyperplanes $(H_{-n})_{n \in \N_0}$ defined in \eqref{definition_hyperplane} and let $A$ denote the event that there is at least one frog $v_n$ activated in every hyperplane $H_{-n}$. For technical reasons we want $v_n$ of the form $v_n = (-n, 3w_n)$ for some $w_n \in \Z^{d-1}$. We first show that $A$ occurs with positive probability. To see this consider the first hyperplane $H_0$ and couple the frogs in this hyperplane with $\fm^*(d-1, \pi_{\text{sym}}, 1-w)$ in the following way: Whenever a frog takes a step in $\pm e_1$-direction, i.e.~leaves its hyperplane, it dies instead. By \cite[Theorem 1.8]{AMP02pt} (or Lemma~\ref{lemma_recurrence_high_d_fc=c}) this process survives with positive probability if $w$ is sufficiently small (independent of the dimension $d$). This means that infinitely many frogs are activated in $H_0$. Obviously, this implies the claim. 

From now on we condition on the event $A$. Note that $\fc_{v_n} \subseteq \fc_0$ for $n\in\N$.
Analogously to the proofs in the last sections we introduce boxes 
\begin{equation*}
\fb_n' = \{-n\} \times [-(3\sqrt{n}+1), 3\sqrt{n}+1]^{d-1} 
\end{equation*}
for $n \in \N$. 
We claim that analogously to Lemma~\ref{percolation_density} there are constants $a, b>0$ and $N \in \N$ such that for $n \geq N$
\begin{equation} \label{proof_thm_high_d_1}
 \P\bigl(\lvert\fb_n' \cap \fc_0\rvert \geq a n^{(d-1)/2}\bigr) \geq b.
\end{equation}
To prove this claim let $a,b>0$ and $N \in \N$ be the constants provided by Lemma \ref{percolation_density} for percolation with parameter $\beta$. For $n \geq N$ couple the frog model with $\fm^*(d-1, \pi_{\text{sym}}, 1-w)$ in the hyperplane $H_n$ as above. Let $B_n' = [-\sqrt{n}, \sqrt{n}]^{d-1}$ and note that $B_n'$ corresponds to $\fb_n'$ restricted to $H_n$ after rescaling. Then by Lemma~\ref{lemma_recurrence_high_d_fc=c} and Lemma~\ref{percolation_density}
\begin{align*}
 \P \bigl(\lvert\fb_n' \cap \fc_{v_n}\rvert \geq a n^{(d-1)/2 } | A \bigr)
    &\geq \P \bigl(\lvert\fb_n' \cap (\{-n\} \times \fc_{3w_n}^*)\rvert \geq a n^{(d-1)/2) } | A \bigr) \\
    &\geq \P\bigl(\lvert B_n' \cap C_{w_n}\rvert \geq a n^{(d-1)/2)}  | A \bigr) \\
    &\geq b.
\end{align*}
Here, $C_{w_n}$ is the open cluster containing $w_n$ in a percolation model with parameter $\beta$ in $\Z^{d-1}$, independently of the frogs.
As $\fc_{v_n} \subseteq \fc_0$, this implies inequality~\eqref{proof_thm_high_d_1}.

By Lemma~\ref{lemma_hitting_probability_RW_drift} and \eqref{proof_thm_high_d_1}, the probability that there is at least one activated frog in $\fb_n'$ that reaches $0$ is at least
\begin{equation*}
 \Bigl(1-(1-c'n^{-(d-1)/2})^{an^{(d-1)/2}}\Bigr)b \geq \bigl(1 - \e^{-ac'}\bigr)b,
\end{equation*}
where $c'>0$ is a constant. Altogether we get by Lemma~\ref{lemma_sum_rv}
\begin{align*}
 \P(\text{$0$ visited infinitely often}) &=    \lim_{n \to \infty} \P(\text{$0$ is visited $\varepsilon n$ many times }) \\
                                         &\geq \lim_{n \to \infty} \P\biggl( \sum_{i=1}^n \1_{\{\exists x \in \fb_n' \cap \fc_{0} \colon x \to 0 \}} \geq \varepsilon n \biggr)\\
                                         &\geq \Bigl(\bigl(1 - \e^{-ac'}\bigr)b - \varepsilon \Bigr) > 0
\end{align*}
for $\varepsilon$ sufficiently small. The claim now follows from Theorem~\ref{lemma_zero_one_law}.
\end{proof}


To prove Proposition~\ref{prop_d>2_arbitrary_drift_small_d} we again first study the frog model with death $\fm^*(d, \pi_{\text{sym}},s)$ in the hyperplanes and couple it with percolation. This time we use cubes of size $(2K+1)^{d}$ for some $K \in \N_0$. By choosing $K$ large we increase the number of frogs in the cubes. In the proof of the previous proposition this was done by increasing the dimension $d$. For $x \in \Z^d$ and $K \in \N_0$ we define
\begin{align*}
 q_x &= q_x(K) = (2K+1)x, \\
 Q_x &= Q_x(K) = \{y \in \Z^{d} \colon \lVert y-q_x \rVert_{\infty} \leq K\}.
\end{align*}

Note that this definition coincides with \eqref{def_box}.
In analogy to Lemma~\ref{lemma_recurrence_high_d_fc=c} the frog cluster dominates a percolation cluster.

\begin{lemma}\label{lemma_frog_model_with_death_percolation_arbitrary_d}
For $d \geq 2$ consider the frog model $\fm^*(d,\pi_{\text{sym}},s)$ and supercritical site percolation on $\Z^d$. There are constants $s_r(d) < 1$ and $K \in \N_0$ such that for any $s \geq s_r(d)$, $A \subseteq \Z^d$, $v \in \Z^d$ and for all $k \geq 0$
\begin{equation*}
 \P(\lvert A \cap C_v \rvert \geq k) \leq \P\Bigl(\Bigl\lvert \bigcup_{x \in A}Q_x\cap \fc_{q_v}^*\Bigr\rvert \geq k\Bigr).
\end{equation*}
\end{lemma}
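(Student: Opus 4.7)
The argument will parallel the coupling of Lemma~\ref{lemma_recurrence_high_d_fc=c}, but using the boxes $Q_x = Q_x(K)$ from \eqref{def_box} in place of the $W_x$'s; this removes the need for the centre-choosing auxiliary Bernoulli variables, because the canonical ``starting point'' $q_x$ of each cube is already fixed. For $K \in \N_0$ to be chosen below, say that a site $x \in \Z^d$ is \emph{open} if in $\fm^*(d,\pi_{\text{sym}},s)$ there is a frog path in $Q_x$ from $q_x$ to $q_{x+e}$ for each $e \in \mathcal{E}_d$. This event depends only on the trajectories and lifetimes of the frogs originating in $Q_x$; since the cubes $(Q_x)_{x\in\Z^d}$ are disjoint, the events $\{x \text{ is open}\}_{x \in \Z^d}$ are i.i.d.\ and define an independent site percolation on $\Z^d$ with parameter $p(K,s) := \P(0 \text{ is open})$.

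The key step is to show that, given any $\beta < 1$, one can choose $K$ and $s_r(d) < 1$ so that $p(K,s) \geq \beta$ for all $s \geq s_r(d)$; taking $\beta$ above the parameter of the given supercritical percolation then yields the desired stochastic domination. For $s = 1$ this follows from earlier results: when $d = 2$, recurrence of simple random walk gives $p(K,1) = 1$ for every $K$, while for $d \geq 3$, Lemma~\ref{lemma_recurrence_cube_size} applied with $w = 1/d$ gives $p(K,1) \to 1$ as $K \to \infty$. To transfer this to $s < 1$, fix $\delta > 0$ and then $K$ large enough that $p(K,1) \geq 1 - \delta$, and choose a finite $T = T(K,\delta)$ such that in the no-death model the open event at $0$ is realized inside $Q_0$ by global time $T$ with probability at least $1 - 2\delta$; such $T$ exists because the relevant completion times are a.s.\ finite on the event of openness. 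Couple $\fm^*(d,\pi_{\text{sym}},s)$ with $\fm(d,\pi_{\text{sym}})$ by reusing all trajectories and attaching to each frog an independent geometric lifetime of parameter $1-s$, measured from its activation. The (trajectory-dependent) no-death open event and the (lifetime-dependent) event that every frog in $Q_0$ survives $T$ of its own steps after activation are independent, so
\begin{equation*}
p(K,s) \;\geq\; (1-2\delta)\, s^{\,T(2K+1)^d},
\end{equation*}
which exceeds $\beta$ once $\delta$ is small and $s_r(d)$ is sufficiently close to $1$.

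Given this lower bound on $p(K,s)$, classical stochastic domination allows one to couple the frog-induced percolation with the given supercritical percolation so that every vertex open in the latter is also open in the former, and hence the cluster $C_v$ of the given percolation is contained in the cluster $C_v^{\text{frog}}$ of the frog-induced one. If $x \in C_v^{\text{frog}}$, concatenating along an open lattice path $v = x_0, x_1, \ldots, x_n = x$ the frog paths inside each $Q_{x_i}$ guaranteed by openness produces a frog path from $q_v$ to $q_x$ in $\fm^*(d,\pi_{\text{sym}},s)$, so $q_x \in Q_x \cap \fc^*_{q_v}$. Therefore, on the coupling,
\begin{equation*}
|A \cap C_v| \;\leq\; |A \cap C_v^{\text{frog}}| \;\leq\; \Bigl|\bigcup_{x \in A} Q_x \cap \fc^*_{q_v}\Bigr|,
\end{equation*}
and taking probabilities yields the asserted inequality.

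The main obstacle is the passage from $s = 1$ to $s < 1$: the shape theorem underlying Lemma~\ref{lemma_recurrence_cube_size} is not available in the death model, because the active population eventually becomes extinct. Freezing $K$ first, so that the no-death open event becomes measurable with respect to a \emph{finite} time window, is what turns the survival cost $s^{T(2K+1)^d}$ into a penalty that can be absorbed by pushing $s \to 1$. Once this is in hand, the remaining ingredients — independence of the openness events across cubes, stochastic domination, and concatenation of frog paths — are routine.
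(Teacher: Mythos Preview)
Your proof is correct and follows essentially the same route as the paper: define the cube-level percolation via the event ``$q_x \fp{Q_x} q_{x+e}$ for all $e$'', use Lemma~\ref{lemma_recurrence_cube_size} to get $p(K,1)$ close to $1$, push this to $s<1$, and then read off the domination. The only substantive difference is in how you pass from $s=1$ to $s<1$: the paper simply remarks that $\lim_{s\to 1}p(K,s)=p(K,1)$ by an argument analogous to Lemma~\ref{lemma_recurrence_small_drift} (a Fatou/monotone-convergence computation on the path expansion), whereas you give an explicit finite-time truncation and survival-coupling bound $p(K,s)\geq(1-2\delta)\,s^{T(2K+1)^d}$. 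Both arguments are valid and yield the same conclusion; yours is more hands-on, the paper's slightly slicker. One small wording point: your events (a) and (b) are indeed independent because the geometric lifetimes are independent of the trajectories, but the phrase ``after activation'' is a red herring here---what matters is that each frog's lifetime variable is $\geq T$, which is purely a statement about the lifetime coins.
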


\begin{proof}
We couple the frog model with percolation as follows: A site $x \in \Z^{d}$ is called open if for every $e \in \mathcal{E}_{d}$ there exists a frog path from $q_x$ to $q_{x + e}$ in $Q_x$. Note that $x \in C_v$ now implies $q_x \in \fc_{q_v}^*$ for any $v \in \Z^d$. We denote the probability of a site $x$ to be open by $p(K,s)$. By Lemma~\ref{lemma_recurrence_cube_size} $p(K,1)$ is close to $1$ for $K$ large. As in the proof of Lemma~\ref{lemma_recurrence_small_drift} one can show that $\lim_{s \to 1} p(K,s)= p(K,1)$. Thus, we can choose $K \in \N$ and $s_r >0$ such that $p(K,s) > p_c(d)$ for all $s > s_r$, i.e. the percolation is supercritical. 
\end{proof}

\begin{proof}[Proof of Proposition~\ref{prop_d>2_arbitrary_drift_small_d}]
 Using Lemma~\ref{lemma_frog_model_with_death_percolation_arbitrary_d} instead of Lemma~\ref{lemma_recurrence_high_d_fc=c} and boxes $Q_x$ instead of $Q_x'$, the proof is analogous to the proof of Proposition~\ref{prop_d>2_arbitrary_drift_large_d}.
\end{proof}

\begin{proof}[Proof of Theorem \ref{thm_d>2_arbitrary_drift_i}]
 Theorem \ref{thm_d>2_arbitrary_drift_i} follows from Proposition~\ref{prop_d>2_arbitrary_drift_large_d} and Proposition~\ref{prop_d>2_arbitrary_drift_small_d}.
\end{proof}


\subsection*{Transience for $d\geq 2$ and arbitrary drift}

\begin{proof}[Proof of Theorem~\ref{thm_d=2_arbitrary_drift_ii} and Theorem~\ref{thm_d>2_arbitrary_drift_ii}]
Let the parameters $\alpha>0$ and $d\geq 2$ be fixed throughout the proof. 
For $x \in \Z^d$ we define 
\begin{equation}
 L_x = \{y \in \Z^d \colon y_i = x_i \text{ for all $2 \leq i \leq d$}\}.
\end{equation}
$L_x$ consists of all vertices which agree in all coordinates with $x$ except the $e_1$-coordinate. 
The key observation used in the proof is that all particles mainly move along these lines if the weight $w$ is large.

We dominate the frog model by a branching random walk on $\Z^d$. At time $n=0$ the branching random walk starts with one particle at the origin. At every step in time every particle produces offspring as follows: For every particle located at $x \in \Z^d$ consider an independent copy of the frog model. At any vertex $z \in \Z^d \setminus L_x$ the particle produces $|\{y \in L_x \colon x \fp{L_x} y, y \to z\}|$ many children. Notice that this number might be $0$ or infinite. The particle does not produce any offspring at a vertex in $L_x$. 
Further, note that the particles reproduce independently of each other as we use independent copies of the frog model to generate the offspring.

One can couple this branching random walk with the original frog model.
To explain the coupling, let us briefly describe how to go from the original frog model to the branching random walk. Recall that the frog model is entirely determined by a set of trajectories $(S_n^x)_{n \in \N_0, x \in \Z^d}$ of random walks. We use this set of trajectories to produce the particles in the first generation of the branching random walk, i.e.~the children of the particle initially at $0$, as explained above. Now, assume that the first $n$ generations of the branching random walk have been created. Enumerate the particles in the $n$-th generation. When generating the offspring of the $i$-th particle in this generation, delete all trajectories of the frog model used for generating the offspring of a particle~$j$ with $j < i$ or a particle in an earlier generation, and replace them by independent trajectories. Otherwise, use the original trajectories.

One can check that the branching random walk dominates the frog model in the following sense: 
For every frog in $\Z^d \setminus L_0$ that is activated and visits $0$ there is a particle at $0$ in the branching random walk. Thus, the number of visits to the origin by particles in the branching random walk is at least as big as the number of visits to $0$ by frogs in the frog model, not counting those visits to $0$ made by frogs initially in $L_0$. 
Note that, if the frog model was recurrent, then almost surely there would be infinitely many frogs in $\Z^d \setminus L_0$ activated that return to $0$. In particular, also in the branching random walk infinitely many particles would return to $0$. Therefore, to prove transience of the frog model it suffices to show that in the branching random walk only finitely many particles return to $0$ almost surely.

Let $D_n$ denote the set of descendants in the $n$-th generation of the branching random walk. Further, for $i \in D_n$ let $X_n^i$ be the $e_1$-coordinate of the location of particle $i$. Define for $\theta >0$ and $n \in \N_0$
\begin{equation}
 \mu = \E \Bigl[ \sum_{i \in D_1} \e^{-\theta  X_1^i} \Bigr] \qquad \text{and}  \qquad  M_n = \frac{1}{\mu^n} \sum_{i \in D_n} \e^{-\theta X_n^i}.
\end{equation}
We claim that $\mu <1$ for $w$ close to $1$ and $\theta$ small, which, in particular, implies that $(M_n)_{n \in \N_0}$ is well-defined. We show this claim in the end of the proof. We next show that $(M_n)_{n \in \N_0}$ is a martingale with respect to the filtration $(\mathcal{F}_n)_{n \in \N_0}$ with $\mathcal{F}_n=\sigma \bigl(D_1, \ldots, D_{n}, (X^i_1)_{i \in D_1}, \ldots, (X^i_{n})_{i \in D_{n}} \bigr)$.

Obviously, $M_n$ is $\mathcal{F}_n$-measurable. For a particle $i \in D_n$ denote its descendants in generation $n+1$ by $D_{n+1}^i$. Since particles branch independently, we get
\begin{align*}
\E[ M_{n+1} | \mathcal{F}_n ] &= \E \Bigl[\frac{1}{\mu^{n+1}} \sum_{i \in D_{n+1}} \e^{-\theta X_{n+1}^i} \bigm\vert \mathcal{F}_n \Bigr] \\
                              &= \frac{1}{\mu^n} \sum_{i \in D_{n}} \e^{-\theta X_{n}^i} \cdot \frac{1}{\mu} \E \Bigr[ \sum_{j \in D_{n+1}^i} \e^{-\theta \left( X_{n+1}^j - X_{n}^i \right)} \Bigl].
\end{align*}
Note that the expectation on the right hand side is independent of $i$ and $n$ and therefore, by the definition of $\mu$, we conclude
\begin{align*}
\E[ M_{n+1} | \mathcal{F}_n ] = M_n.
\end{align*}
This calculation also yields $\E[\lvert M_n\rvert]= \E[M_n]=\E[M_0]=1$, and therefore $M_n \in \mathcal{L}^1$. This in particular implies that $M_n$ is finite almost surely for every $n \in \N_0$. Thus, $X_n^i=0$ can only occur for finitely many $i \in D_n$ almost surely for every $n \in \N_0$, i.e.~in every generation only finitely many particles can be at $0$.
By the martingale convergence theorem, there exists an almost surely finite random variable $M_\infty$, such that $\lim_{n \to \infty} M_n = M_\infty$ almost surely.
Combining this with $\mu <1$, we get $\lim_{n \to \infty}\sum_{i \in D_{n}} \e^{-\theta X_{n}^i} = 0$ almost surely. Hence, $X_n^i=0$ for some $i \in D_n$ occurs only for finitely many times $n$. Overall, this shows that the branching random walk is transient.

It remains to show $\mu < 1$. Note that the particles in $D_1$ are at vertices in the set $\{y \in \Z^d\setminus L_0 \colon 0 \fp{L_0} y\}$. Therefore, for the calculation of $\mu$ we first need to consider all sites in $L_0$ that are reached from $0$ by frog paths in $L_0$. The idea is to control the number of frogs activated on the negative $e_1$-axis using Lemma~\ref{lemma_1d_fm} and estimating the number of frogs activated on the positive $e_1$-axis by assuming the worst case scenario that all of them will be activated. Then, for every $k \in \Z$ we have to estimate the number of vertices with $e_1$-coordinate $k$ visited by each of these active frogs on the $e_1$-axis. Due to the definition of $\mu$, the sites visited by frogs on the positive $e_1$-axis do not contribute much to $\mu$. 
Recall that $H_k$ denotes the hyperplane that consists of all vertices with $e_1$-coordinate equal to $k\in \Z$, see \eqref{definition_hyperplane}. For $k,i \in \Z$ define 
\begin{equation*}
N_{k,i} = \lvert\{x \in H_k \setminus L_0 \colon (i,0, \ldots, 0) \to x\}\rvert. 
\end{equation*}
As $N_{k,i}$ equals $N_{k-i,0}$ in distribution for all $i,k \in \Z$, we get 
\begin{align} \label{proof_transience_arbirtrary_drift_1}
  \mu &= \E \Bigl[ \sum_{i \in D_1} \e^{-\theta  X_1^i} \Bigr] \nonumber \\
      &= \sum_{i=-\infty}^{\infty} \sum_{k=-\infty}^{\infty} \P\bigl(0 \fp{L_0} (i,0, \ldots, 0)\bigr) \E[N_{k,i}] \e^{-\theta k} \nonumber\\
      &= \sum_{k=-\infty}^{\infty}  \E[N_{k,0}] \e^{-\theta k} \sum_{i=-\infty}^{\infty} \e^{-\theta i} \P\bigl(0 \fp{L_0} (i,0, \ldots, 0)\bigr).
\end{align}
Note that $\P\bigl(0 \fp{L_0} (i,0, \ldots, 0)\bigr)$ is smaller or equal than the probability of the event $\{0 \fp{\Z} i\}$ in the frog model $\fm(1,1,\alpha)$. Hence, by Lemma~\ref{lemma_1d_fm}, there is a constant $c_1 >0$ such that $\P\bigl(0 \fp{L_0} (i,0, \ldots, 0)\bigr) \leq \e^{c_1i}$ for all $i \leq 0$. Thus, \eqref{proof_transience_arbirtrary_drift_1} implies that for $\theta<c_1$ there is a constant $c_2=c_2(\theta)< \infty$ such that
\begin{equation}\label{proof_transience_arbirtrary_drift_2}
 \mu \leq c_2 \sum_{k=-\infty}^{\infty}  \E[N_{k,0}] \e^{-\theta k}.
\end{equation}
Next, we estimate $\E[N_{k,0}]$, the expected number of vertices in $H_k \setminus L_0$ visited by a single particle starting at $0$. Recall that the trajectory of frog $0$ is denoted by $(S_n^0)_{n\in \N_0}$. We define $T_k = \min\{n \in \N_0 \colon S_n^0 \in H_k\}$, the entrance time of the hyperplane $H_k$, and $T_k' = \max\{n \in \N_0 \colon S_n^0 \in H_k\}$, the last time frog $0$ is in the hyperplane $H_k$. Obviously, $N_{k,0}=0$ on the event $\{T_k = \infty\}$. Hence, assume we are on $\{T_k < \infty\}$. The particle can only visit a vertex in $H_k \setminus L_0$ at time $T_k$ if the random walk took at least one step in non-$e_1$-direction up to time $T_k$. This happens with probability $\E[1-w^{T_k}]$. Furthermore, the number of vertices visited in $H_k$ after time $T_k$ can be estimated by the number of steps in non-$e_1$-direction taken between times $T_k$ and $T_k'$. This number is binomially distributed and, thus, its expectation equals $(1-w)\E[T_k'-T_k]$. Overall, this implies
\begin{align*}
 \E[N_{k,0}] \leq  \P(T_k < \infty) \bigr(\E\bigl[1- w^{T_k} \mid T_k < \infty \bigr] + (1-w) \E\bigl[T_k' - T_k \mid T_k <\infty\bigr] \bigl).
\end{align*}
For $k < 0$ the probability $\P(T_k < \infty)$ decays exponentially in $k$ by Lemma~\ref{lemma_hitting_probability_hyperplane}. Therefore, we can choose $\theta$ small such that $\P(T_k < \infty) \e^{-\theta k} \leq  \e^{-\theta \lvert k \rvert}$ for all $k \in \Z$. Thus, \eqref{proof_transience_arbirtrary_drift_2} implies
\begin{equation}\label{proof_transience_arbirtrary_drift_3}
 \mu \leq  c_2 \sum_{k=-\infty}^{\infty}  \e^{-\theta \lvert k \rvert}\Bigr( \E\bigl[1- w^{T_k} \mid T_k < \infty \bigr] + (1-w) \E\bigl[T_k' - T_k \mid T_k <\infty\bigr] \Bigl).
\end{equation}
Note that the sum in \eqref{proof_transience_arbirtrary_drift_3} is finite as $\E\bigl[T_k' - T_k \mid T_k <\infty\bigr]$ is independent of $k$.
By monotone convergence $\lim_{w \to 1} \mu =0$ and the right hand side of \eqref{proof_transience_arbirtrary_drift_3} is continuous in $w$. Therefore, we can choose $w$ close to $1$ such that $\mu < 1$, as claimed.
\end{proof}


\subsection*{Transience for $d=2$ and arbitrary weight}

\begin{proof}[Proof of Theorem~\ref{thm_d=2_arbitrary_weight_ii}]
Let $w >0$ be fixed throughout the proof. As in the proof of Theorem~\ref{thm_d=2_arbitrary_drift_ii} and Theorem~\ref{thm_d>2_arbitrary_drift_ii} we dominate the frog model by a branching random walk. This time we use a one-dimensional branching random walk on $\Z$. For the construction of the process, let $\xi$ be the number of activated frogs in an independent one-dimensional frog model $\fm^*(1,\pi_{\text{sym}}, 1-w)$ with two active frogs at $0$ initially. At time $n=0$, the branching random walk starts with one particle in the origin. At every time $n \in \N$, the process repeats the following two steps. First, every particle produces offspring independently of all other particles with the number of offspring being distributed as $\xi$. Then, each particle jumps to the right with probability $\frac{1+\alpha}{2}$ and to the left with probability $\frac{1-\alpha}{2}$. 

As an intermediate step to understand the relation between the frog model and this branching random walk on $\Z$, we first couple the frog model with a branching random walk on $\Z^2$ with initially one particle at $0$. Partition the lattice $\Z^2$ into hyperplanes $(H_n)_{n \in \Z}$ as defined in \eqref{definition_hyperplane}. Let the frog model $\fm(2,\pi_{w,\alpha})$ with initially two active frogs at $0 \in H_0$ evolve and stop every frog when it first enters $H_1$ or $H_{-1}$. Every frog leaves its hyperplane in every step with probability $w$. Thus, the number of stopped frogs is distributed according to $\xi$. A stopped frog is in $H_1$ with probability $\frac{1+\alpha}{2}$ and in $H_{-1}$ with probability $\frac{1-\alpha}{2}$. The stopped particles form the offspring of the particle at $0$ in the branching random walk. We repeat this procedure to generate the offspring of an arbitrary particle in the branching random walk. Introduce an ordering of all particles in the branching random walk and let the particles branch one after another. Before generating the offspring of the $i$-th particle, refill every vertex which is no longer occupied by a sleeping frog with an extra independent sleeping frog. Unstop frog $i$ and let it continue its work as usual, ignoring all other stopped frogs. Note that there is a sleeping frog at the starting vertex of frog $i$ that is immediately activated. This explains our definition of $\xi$. 
Again stop every frog once it enters one of the neighbouring hyperplanes. These newly stopped frogs form the offspring of the $i$-th particle. This procedure creates a branching random walk with independent identically distributed offspring. Every vertex visited in the frog model is obviously also visited by the branching random walk. 

Now, project all particles in the intermediate two-dimensional branching random walk onto the first coordinate. This creates a branching random walk on $\Z$ distributed as the one described above. The construction shows that transience of this one-dimensional branching random walk implies transience of the frog model.

To prove that the one-dimensional branching random walk is transient for $\alpha$ close to $1$, we proceed as in the proof of Theorem~\ref{thm_d=2_arbitrary_drift_ii} and Theorem~\ref{thm_d>2_arbitrary_drift_ii}. The proof only differs in the calculation of the parameter $\mu$ defined by  
\begin{equation*}
 \mu = \E \Bigl[ \sum_{i \in D_1} \e^{-\theta  X_1^i} \Bigr] 
\end{equation*}
for $\theta >0$ with $D_1$ denoting the set of descendants in the first generation of the branching random walk and $X_1^i$ the $e_1$-coordinate of the location of particle $i \in D_1$. Here, we immediately get
\begin{equation*}
 \mu = \frac12  \bigl((1-\alpha) \e^{\theta} + (1+\alpha) \e^{-\theta}\bigr) \E[\xi].
\end{equation*}
Lemma~\ref{lemma_1d_fm} implies $\E[\xi] < \infty$. Thus, we can choose $\theta = \log\bigl(2\E[\xi]\bigr)$. Then $\lim_{\alpha \to 1} \mu = \frac12$ and by continuity $\mu < 1$ for $\alpha$ close to $1$, as required.
\end{proof}

\section{Open Problems}\label{open_problems}
We believe that there is a monotone curve separating the transient from the recurrent regime in the phase diagram shown in Figure~\ref{phase_diagram}.

\begin{con}\label{con_critical_curve}
For every dimension $d$ there exists a decreasing function $f_d \colon [0,1] \to [0,1]$ such that the frog model $\fm(d,\pi_{w,\alpha})$ is recurrent for all $w,\alpha \in [0,1]$ such that $w<f_d(\alpha)$ and transient for all $w,\alpha \in [0,1]$ such that $w>f_d(\alpha)$.
\end{con}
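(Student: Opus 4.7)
The plan is to establish Conjecture~\ref{con_critical_curve} by proving monotonicity of the recurrent region. Define $R_d := \{(\alpha,w) \in [0,1]^2 : \fm(d,\pi_{w,\alpha})\text{ is recurrent}\}$; the claim I would try to prove is that $R_d$ is downward closed, i.e.\ $(\alpha,w) \in R_d$ together with $\alpha' \leq \alpha$ and $w' \leq w$ implies $(\alpha',w') \in R_d$. Combined with the zero-one law (Theorem~\ref{lemma_zero_one_law}), downward closedness immediately yields the desired boundary function $f_d(\alpha) := \sup\{w \in [0,1] : (\alpha,w) \in R_d\}$, which is then automatically non-increasing and separates the two phases.

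For the monotonicity I would attempt a coupling. For every vertex $x \in \Z^d$, drive the two frog trajectories $S^x$ with parameters $(\alpha,w)$ and $S^{x,\star}$ with parameters $(\alpha',w')$ from a common i.i.d.\ sequence $(U_n^x)$ of uniforms by writing $\pi_{w,\alpha}$ as an explicit partition of $[0,1]$. One can arrange that whenever $S^x$ takes a step in direction $-e_1$ so does $S^{x,\star}$, and whenever $S^x$ takes a step in $+e_1$, $S^{x,\star}$ either copies this step or takes a transverse one; this yields pathwise domination $S^{x,\star}_{n,1} \leq S^x_{n,1}$ of the $e_1$-coordinate for every frog. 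To convert this into a statement about recurrence of the origin I would combine the trajectory coupling with a monotone version of the percolation comparisons developed in Section~\ref{proofs}: verify that the open-site probability $p(K,\alpha,w)$ appearing in Lemmas~\ref{lemma_recurrence_small_drift} and \ref{lemma_d_2_arbitrary_drift_percolation_parameter_bound} is non-increasing in both $\alpha$ and $w$, so that if the renormalised percolation is supercritical at $(\alpha,w)$ it is supercritical a fortiori at $(\alpha',w')$; a dual monotonicity of the branching random walk dominating the frog model in the transience proofs would give the reverse inclusion for the transient region.

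The main obstacle is that the pathwise coupling of individual trajectories does \emph{not} directly transfer visits to the origin: a frog whose $e_1$-coordinate is dominated may still miss $0$ by diverging in the transverse coordinates, so one cannot argue for monotonicity of the number of returns to the origin at the level of single frogs. Any working proof must therefore pass through an intermediate dominating structure, and the difficulty is that the percolation and branching random walk comparisons used in this paper are far from sharp, so monotonicity of those auxiliary bounds only produces monotonicity of a proper subset of $R_d$, not of $R_d$ itself. Closing this gap seems to require either a genuinely sharp coupling that is monotone in both parameters at the level of the full frog system, or a new characterisation of recurrence (for instance via a variational formula or a percolation model of the full activation graph $\{(x,y) : x \to y\}$) for which monotonicity in $(\alpha,w)$ is manifest. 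I expect this step, rather than any routine verification, to be the essential difficulty, which is presumably why the authors have left the statement as a conjecture.
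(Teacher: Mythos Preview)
The statement is Conjecture~\ref{con_critical_curve}, placed in Section~\ref{open_problems} (``Open Problems''); the paper offers \emph{no} proof of it and does not claim one. There is therefore nothing in the paper to compare your attempt against, and your closing sentence --- that the authors have left the statement as a conjecture --- is exactly right.

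Your write-up is not a proof but an honest discussion of an approach and its obstructions, and as such it is largely sound. A couple of remarks on the substance. First, the assertion that the renormalised open-site probability $p(K,\alpha,w)$ is non-increasing in both parameters is itself unproven: Lemmas~\ref{lemma_recurrence_cube_size}, \ref{lemma_recurrence_small_drift} and \ref{lemma_d_2_arbitrary_drift_percolation_parameter_bound} establish only limiting behaviour as $\alpha\to 0$ or $w\to 0$, not monotonicity along the way, so even the weaker programme of showing monotonicity of the \emph{sufficient} percolation condition would require new work. Second, and this is the point you identify yourself, monotonicity of a sufficient condition for recurrence (or of a sufficient condition for transience via the branching random walk) yields monotonicity only on the subregions where those comparisons bite; it says nothing about the actual boundary of $R_d$. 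Your diagnosis that a genuinely sharp, parameter-monotone coupling of the full interacting system would be needed is the heart of the matter, and no such coupling is known. The trajectory-level coupling you describe, with $S^{x,\star}_{n,1}\le S^x_{n,1}$, indeed breaks down for the reason you give: domination of the drift coordinate does not control the transverse coordinates, and in the frog model activation and return to the origin depend on the full position, not just the first coordinate.

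In short: your proposal does not prove the conjecture, but you already know this, and your account of \emph{why} it does not is correct and matches the reason the statement remains open in the paper.
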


Intuitively, the frog model approximates a binary branching random walk for $d \to \infty$ from below, as each frog activates a new frog in every step if there are 'infinitely' many directions to choose from. This leads to the following conjecture.

\begin{con}\label{con_high_d}
 The sequence of functions $(f_d)_{d \in \N}$ is increasing in $d$.
\end{con}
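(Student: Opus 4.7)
The plan is to recast the conjecture into the equivalent statement: for every fixed $(w,\alpha)\in(0,1)^2$, recurrence of $\fm(d,\pi_{w,\alpha})$ should imply recurrence of $\fm(d+1,\pi_{w,\alpha})$. The natural strategy is to build a monotone coupling, embedding $\Z^d$ as the hyperplane $\{y_{d+1}=0\}\subset\Z^{d+1}$ and attempting to dominate the visits to the origin in the smaller model by those in the larger one. The first difficulty appears immediately: the transition kernel $\pi_{w,\alpha}$ itself depends on the dimension through the factor $(1-w)/(2(d-1))$ on each non-$e_1$ direction, so there is no literal injection of one step law into the other.

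To make the comparison meaningful I would project a dimension-$(d+1)$ walk onto its first $d$ coordinates. A direct calculation shows that the projected walk is lazy with staying probability $(1-w)/d$, and conditional on moving it follows $\pi_{w',\alpha}$ on $\Z^d$ with
\[
 w' \;=\; \frac{wd}{d-1+w} \;>\; w.
\]
Laziness is harmless for recurrence (the shape theorem still applies, cf.\ Remark~\ref{remark_shape}), and crucially the projected model carries a whole stack of frogs $\{(x,k)\colon k\in\Z\}$ above each $x\in\Z^d$ instead of a single one, which helps recurrence. On the other hand, the shift $w\mapsto w'>w$ tilts the walk further toward the drift axis, which hurts recurrence. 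The bulk of the argument would therefore consist in showing that the abundance of extra frogs more than compensates for the adverse change of weight.

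To quantify this compensation I would revisit the percolation renormalisation of Section~\ref{proofs}. The idea is to redefine the coarse-grained openness of a cube $Q_x$ in dimension $d+1$ using \emph{any} frog inside a thickened slab $Q_x\times\{-K,\ldots,K\}$: by independence of the $2K+1$ frog trajectories originating in such a slab, the probability of the slab being open is at least $1-(1-p_d(K,\alpha))^{2K+1}$, so it dominates $p_d(K,\alpha)$. Combining this with Lemma~\ref{lemma_pc_high_d}, which says that $p_c(d)$ is decreasing in $d$, one would hope to promote the recurrence criterion across dimensions and then retrace the argument of Theorem~\ref{thm_d>2_arbitrary_weight}. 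The step I expect to be the main obstacle is making this slab comparison a genuine stochastic domination: the activation cascade in $\fm(d+1,\pi_{w,\alpha})$ need not awaken every frog in a slab simultaneously, so the trajectory of the frog starting at $q_x$ in dimension $d$ is not dominated by any single trajectory in a slab in dimension $d+1$. A cleanly formulated sandwich, for instance between $\fm(d,\pi_{w,\alpha})$ and an auxiliary lazy multi-frog model on $\Z^d$ that itself projects from $\fm(d+1,\pi_{w,\alpha})$, seems to be the crux, and is presumably why the paper leaves the statement as a conjecture.
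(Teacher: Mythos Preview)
The statement you are addressing is Conjecture~\ref{con_high_d}, listed in Section~\ref{open_problems} under ``Open Problems''. The paper does \emph{not} prove it; it is explicitly left open, with only the heuristic remark that the frog model approximates a binary branching random walk as $d\to\infty$. There is therefore no proof in the paper to compare your attempt against.

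Your write-up is not a proof either, and you say as much in the final paragraph. What you have produced is a reasonable reconnaissance of one natural attack: reduce the monotonicity of $f_d$ to the implication ``recurrence in dimension $d$ $\Rightarrow$ recurrence in dimension $d+1$'', project the $(d{+}1)$-dimensional walk onto its first $d$ coordinates, and try to exploit the extra frogs sitting above each site. Your computation of the effective weight $w'=wd/(d-1+w)>w$ for the projected lazy walk is correct, and you correctly identify the central tension: the projection gains frogs but worsens the weight parameter. The percolation-renormalisation idea and the slab-thickening bound $1-(1-p_d(K,\alpha))^{2K+1}$ are sensible first moves, but as you note, they do not yield a genuine stochastic domination because activation in the $(d{+}1)$-dimensional model does not wake the whole fibre over a site simultaneously.

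Two further caveats worth recording. First, Conjecture~\ref{con_high_d} is stated for the functions $f_d$ of Conjecture~\ref{con_critical_curve}, whose very existence is also open; your reformulation via the implication on recurrence sidesteps this, which is fine, but means you are attacking a statement at least as strong as the one written. Second, even if the slab comparison could be made rigorous, it would feed into the renormalisation machinery of Section~\ref{proofs}, which establishes recurrence only for parameters where $p(K,\alpha)>p_c$; it is not clear this covers the full recurrent regime in dimension $d$, so the argument might only recover a partial monotonicity. In short: the outline is a plausible line of attack on an open problem, the obstacles you flag are real, and nothing here constitutes a proof.
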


In the proof of Theorem~\ref{thm_d>2_arbitrary_drift_i} we use Lemma~\ref{lemma_recurrence_high_d_percolation_parameter_bound} to show that in the frog model with death a frog cluster is dense with positive probability if the survival probability is larger than $\frac34$ and $d$ is large. 
Indeed, we believe that every infinite frog cluster is dense. Hence, $\fm(d,\pi_{w,\alpha})$ would be recurrent for all $\alpha<1$ if $\fm^*(d-1,\pi_{\text{sym}},1-w)$ has a positive survival probability. Further, we believe that the critical survival probability is decreasing in $d$. See also the discussion in \cite[Chapter~1.2]{AMP02pt}. This would imply that $f_d(1^{-})$ is increasing in $d$.

The comparison with a binary branching random walk raises another question.  Let 
\begin{equation*}
 g \colon [0,1] \to [0,1],\ g(\alpha) = \min\bigl\{1, (2(1-\sqrt{1-\alpha^2}))^{-1}\bigr\}.
\end{equation*}
A binary branching random walk on $\Z^d$ with transition probabilities as in \eqref{transition_function} is recurrent iff $w < g(\alpha)$, see \cite[Section~4]{GM06}. 

\begin{question}
Does the sequence of functions $(f_d)_{d \in \N}$ converge pointwise to $g$ as $d \to \infty$?
\end{question}

\paragraph{Acknowledgements} We thank Noam Berger for useful discussions and comments. We are grateful to the referee for pointing out several glitches in the first version.

\bibliographystyle{amsplain}
\bibliography{frogs}

\Addresses

\end{document}